\newcommand{\norme}[1]{\left\Vert #1\right\Vert}
\newcommand{\Ga}{\Gamma}
\newcommand{\pa}{\partial}
\newcommand{\al}{\boldsymbol{\alpha}}
\newcommand{\sig}{\boldsymbol{\sigma}}
\newcommand{\Sig}{\boldsymbol{\Sigma}}
\newcommand{\ta}{\boldsymbol{\tau}}
\newcommand{\ch}{\boldsymbol{\chi}}
\newcommand{\ph}{\boldsymbol{\phi}}
\newcommand{\ps}{\boldsymbol{\psi}}
\renewcommand{\i}{{\rm\mathbf i}}
\DeclareMathOperator{\re}{{Re}} \DeclareMathOperator{\im}{{Im}}
\newcommand{\bR}{\mathbf{R}}
\newcommand{\Ome}{\Omega}
\newcommand{\nab}{\nabla}
\newcommand{\Del}{\Delta}
\newcommand{\cM}{\mathcal{M}}
\newcommand{\cT}{\mathcal{T}}
\newcommand{\cE}{\mathcal{E}}
\newcommand{\Div}{{\rm div}}
\newcommand{\Langle}{\left\langle}
\newcommand{\Rangle}{\right\rangle}
\def\vec#1{\mathbf{#1}}
\numberwithin{equation}{section}
\newtheorem{theorem}{Theorem}[section]
\newtheorem{proposition}{Proposition }[section]
\newtheorem{definition}{Definition }[section]
\newtheorem{lemma}{Lemma}[section]
\newtheorem{remark}{Remark}[section]
\begin{document}

\title[LDG methods for the Helmholtz equation]{Absolutely stable
local discontinuous Galerkin methods for the Helmholtz
equation with large wave number
}

\author{Xiaobing Feng}
\address{Department of Mathematics \\
         The University of Tennessee \\
         Knoxville, TN 37996, U.S.A.}
\email{xfeng@math.utk.edu}

\author{Yulong Xing}
\address{Department of Mathematics \\
The University of Tennessee \\ Knoxville, TN 37996 \\
Computer Science and Mathematics Division\\
Oak Ridge National Laboratory, Oak Ridge, TN 37830, U.S.A.}
\email{xingy@math.utk.edu}

\thanks{The work of the first author was partially supported by the NSF
grants DMS-0710831 and DMS-1016173. The research of the second author was partially
sponsored by the Office of Advanced Scientific Computing Research;
U.S. Department of Energy. The work of the second author was performed at the ORNL,
which is managed by UT-Battelle, LLC under Contract No.
DE-AC05-00OR22725.}

\keywords{
Helmholtz equation, time harmonic waves, local discontinuous Galerkin methods,
stability, error estimates
}

\subjclass{
65N12, 
65N15, 
65N30, 
78A40  
}

\begin{abstract}
Two local discontinuous Galerkin (LDG) methods
using some non-standard numerical fluxes are developed
for the Helmholtz equation with the first order absorbing boundary
condition in the high frequency regime. It is shown
that the proposed LDG methods are absolutely stable (hence well-posed)
with respect to both the wave number and the mesh size.
Optimal order (with respect to the mesh size) error estimates
are proved for all wave numbers in the preasymptotic regime.
To analyze the proposed LDG methods, they are recasted and treated as
(non-conforming) mixed finite element methods. The crux of
the analysis is to establish a generalized {\em inf-sup}
condition, which holds without any mesh constraint,
for each LDG method. The generalized {\em inf-sup}
conditions then easily infer the desired absolute stability
of the proposed LDG methods. In return, the stability results
not only guarantee the well-posedness of the LDG methods but also
play a crucial role in the derivation of the error estimates.
Numerical experiments, which confirm the theoretical results
and compare the proposed two LDG methods, are also presented
in the paper.
\end{abstract}

\maketitle

\section{Introduction}\label{sec-1}

This paper is the third installment in a series \cite{fw08a,fw08b}
which devote to developing absolutely stable discontinuous Galerkin
(DG) methods for the following prototypical Helmholtz problem
with large wave number:
\begin{alignat}{2}\label{helm-1}
-\Delta u- k^2 u &=f &&\qquad \text{in }\Omega \subset \mathbf{R}^d,\,d=2,3,  \\
\frac{\pa u}{\pa \vec{n}_{\Ome}}+\i ku &=g &&\qquad \text{on }
\Ga=\pa\Ome,  \label{helm-2}
\end{alignat}
where $\i =\sqrt{-1}$ denotes the imaginary unit. $k\in \mathbf{R}_+$ is a
given positive (large) number and known as the wave number.
\eqref{helm-2} is the so-called first order absorbing boundary
condition \cite{em79}.

We recall that \cite{fw08a,fw08b} focused on designing and analyzing
$h$- and {\em $hp$-interior penalty discontinuous Galerkin} (IPDG) methods
which are absolutely stable (with respect to wave number $k$ and mesh
size $h$) and optimally convergent (with respect to $h$). The main ideas
of \cite{fw08a,fw08b} are to introduce some novel interior penalty
terms in the sesquilinear forms of the proposed IPDG methods and
to use a non-standard analytical tool, which is based on a Rellich
identity technique, to prove the desired stability and error estimates.
The numerical experiment results shown that the absolutely stable
IPDG methods significantly outperform the standard finite element
and finite difference methods, which are known only to be stable
under stringent mesh constraints $hk\lesssim 1$ or $hk^2\lesssim 1$
(cf. \cite{dssb93,ib95a}), for the Helmholtz problem.
Moreover, the numerical experiment results also shown that these IPDG
methods are capable to correctly track the phases of the highly oscillatory
waves even when the mesh violates the ``rule-of-thumb" condition
(i.e., $6-10$ grid points must be used in a wave length).
The main difficulty of analyzing the Helmholtz type problems
is caused by the strong indefiniteness of the Helmholtz equation which in turn
makes it hard to establish stability estimates for its numerical
approximations. The loss of stability in the case of large wave numbers
results in an additional pollution error (besides the interpolation error)
in the global error bounds. Extensive research has been done to address the
question whether it is possible to reduce the pollution effect,
we refer the reader to Chapter 4 of \cite{ihlenburg98} and the references therein
for an detailed exposition in this direction.

Motivated by the success of \cite{fw08a,fw08b}, the primary
objective of this paper is to extend the work of \cite{fw08a,fw08b}
to the {\em local discontinuous Galerkin} (LDG) formulation,
which is known to be more ``physical" and flexible than the IPDG
formulation on designing DG schemes \cite{ABCM2002,CS1998b}.
As it is well-known now, the key step for constructing LDG methods
is to design the numerical fluxes. As soon as the numerical fluxes
are selected, for a large class of coercive elliptic and parabolic
second order problems, there is a general framework for carrying out
convergence analysis of LDG methods \cite{ABCM2002}. Unfortunately,
this general framework does not apply to the Helmholtz type
problems which is extremely noncoercive/indefinite for large
wave number $k$. Nevertheless,
when designing the numerical fluxes for our LDG methods, we borrow
the idea of \cite{ABCM2002} by establishing the connection between
our LDG methods and the IPDG methods of \cite{fw08a,fw08b} although
it turns out that the IPDG methods of \cite{fw08a,fw08b} do not have
exactly equivalent LDG formulations due to the non-standard penalty terms
used in \cite{fw08a,fw08b}. This then leads to the construction
of our first LDG method.  It is proved and numerically verified
that this LDG method is absolutely stable and optimally convergent
for the scalar variable. However, it is sub-optimal for the
vector/flux variable. To improve the approximation accuracy for the
vector/flux variable, we design another set of numerical fluxes which
result in the construction of our second LDG method. It is
proved that the second LDG method is also absolutely stable and
gives a better approximation for the vector/flux variable
than the first method. On the other hand, it is computationally more
expensive than the first LDG method, which is expected.

To analyze the proposed LDG methods, we take an opposite approach
to that advocated in \cite{ABCM2002}, that is, instead of converting
LDG methods to their ``equivalent" IPDG methods in the primal form,
we recast and treat our LDG methods as nonconforming mixed finite
element methods. To avoid using the standard techniques such as
Schatz argument (cf. \cite{Brenner_Scott08,dssb93}) or Babu{\v{s}}ka's
{\em inf-sup} condition argument \cite{ib95a} to derive error
estimates (and to prove stability), both approaches would certainly
lead to  stringent mesh constraints, our main idea is to
establish a {\em generalized inf-sup} condition, which holds
without any mesh constraint, for each LDG method. The generalized
{\em inf-sup} conditions then immediately infer the desired absolute
stability of the proposed LDG methods. In return, the stability results
not only guarantee the well-posedness of the LDG methods but also
play a crucial role in the derivation of the (optimal) error estimates.

It should be pointed out that a lot of work has recently been done
on developing DG methods using piecewise plane wave functions,
oppose to simpler piecewise polynomial functions as done in this paper,
for the Helmholtz type problems. However, to the best of our
knowledge, none of these plane wave DG method is proved to be
absolutely stable with respect to wave number $k$ and mesh size $h$.
We refer the reader to \cite{Griesmaier_Monk10,Hiptmair_Perugia09,LHM09}
and the references therein for more discussions in this
direction. We also refer to \cite{fw08a,fw08b} for more
discussions and references on other discretization techniques
for the Helmholtz type problems.

This paper consists of four additional sections. In Section \ref{sec-2},
we introduce the notations used in this paper and present the
derivations of our two LDG methods. In Section \ref{sec-3},
we present a detailed stability analysis for both LDG methods.
The main task of the section is to prove a {\em generalized inf-sup} condition
for each proposed LDG method. Similar to \cite{fw08a,fw08b}, a
nonorthodox test function trick is the key to get the job done.
In Section \ref{sec-4}, a non-standard two-step error estimate
procedure is used to derive error estimates for the proposed
LDG methods. Once again, the stability estimates established in
Section \ref{sec-3} play a crucial role. Finally, Section \ref{sec-5}
contains some numerical experiments which are designed to
verify the theoretical error bounds proved in Section \ref{sec-4}
and to compare the performance of the proposed two LDG methods.

\section{Formulation of local discontinuous Galerkin methods}\label{sec-2}
The standard space, norm and inner product notation
are adopted in this paper. Their definitions can be
found in \cite{ABCM2002,Brenner_Scott08,CKS00,Riviere08}.
In particular, $(\cdot,\cdot)_Q$ and $\langle \cdot,\cdot\rangle_\Sigma$
for $\Sigma\subset \pa Q$ denote the $L^2$-inner product
on {\em complex-valued} $L^2(Q)$ and $L^2(\Sigma)$
spaces, respectively. $(\cdot,\cdot):=(\cdot,\cdot)_\Ome$
and $\langle \cdot,\cdot\rangle:=\langle \cdot,\cdot\rangle_{\pa\Ome}$.
Throughout the paper, $C$ is used to denote a generic positive constant
which is independent of $h$ and $k$. We also use the shorthand
notation $A\lesssim B$ and $B\gtrsim A$ for the
inequality $A\leq C B$ and $B\geq CA$. $A\simeq B$ is a shorthand
notation for the statement $A\lesssim B$ and $B\lesssim A$.

Assume that $\Ome \subset \bR^d\, (d=2,3)$ is a bounded and
{\em strictly star-shaped} domain with respect to a point $x_\Ome\in \Ome$.
We now recall the definition of star-shaped domains.
\begin{definition}\label{def1}
$Q\subset \bR^d$ is said to be a {\em star-shaped} domain with respect
to $x_Q\in Q$ if there exists a nonnegative constant $c_Q$ such that
\begin{equation}\label{estar}
(x-x_Q)\cdot \vec{n}_Q\ge c_Q \qquad \forall x\in\pa Q.
\end{equation}
$Q\subset \bR^d$ is said to be {\em strictly star-shaped} if $c_Q$ is positive.
\end{definition}

Let $\cT_{h}$ be a family of partitions of $\Omega $ parameterized
by $h>0$. For any triangle/tetrahedron $K\in \cT_{h}$, we define
$h_{K}:=\mbox{diam}(K)$ and $h:=\max_{K\in \cT_h} h_K$.
Similarly, for each edge/face $e$ of $K\in \cT_{h}$,
define $h_{e}:=\mbox{diam}(e)$. We assume that the elements of $\cT_{h}$
satisfy the minimal angle condition. We also define
\begin{align*}
\mathcal{E}_{h}^{I}&:=\mbox{ set of all interior edges/faces of $\cT_h$},
\\
\mathcal{E}_{h}^{B}&:=\mbox{ set of all boundary edges/faces of $\cT_h$ on
$\Ga=\pa\Ome$},\\
\mathcal{E}_{h}&:=\cE_h^I\cup \cE_h^B.
\end{align*}
Let $e$ be an interior edge shared by two elements $K_{1}$ and $K_{2}$
whose unit outward normal vectors are denoted by $\vec{n}_{1}$
and $\vec{n}_{2}$. For a scalar function $v$, let
$v_i=v|_{\partial K_i}$, and define
\begin{equation*}
\left\{ v\right\} =\frac{1}{2}( v_{1}+v_{2}),
\quad [v] =v_K-v_{K^\prime},
\quad [[ v]] =v_{1} \vec{n}_{1}+v_{2}\vec{n}_{2}
\quad \text{on } e\in \cE_h^I,
\end{equation*}
where K is $K_{1}$ or $K_{2}$, whichever has the bigger global labeling
and $K^{\prime }$ is the other.
For a vector field $\vec{v}$, let $\vec{v}_i=\vec{v}|_{\partial K_i}$
and define
\begin{equation*}
\{\vec{v}\} =\frac{1}{2}(\vec{v}_{1}+\vec{v}_{2}),
\quad [\vec{v}] =\vec{v}_K-\vec{v}_{K^{\prime }},
\quad [[\vec{v}]]=\vec{v}_1\cdot \vec{n}_1+\vec{v}_2\cdot \vec{n}_{2}
\quad \text{on }e\in \cE_h^I.
\end{equation*}

As it is well-known now (cf. \cite{ABCM2002}) that the first step
for formulating an LDG method is to rewrite the given PDE as a
first order system by introducing an auxiliary variable. For
the Helmholtz problem \eqref{helm-1}--\eqref{helm-2} we have
\begin{alignat}{2} \label{e2.2}
\sig &=\nabla u &&\qquad \text{in }\Omega, \\
-\Div\sig-k^{2}u &=f &&\qquad \text{in }\Omega, \label{e2.3}\\
\frac{\pa u}{\pa \vec{n}_\Ome}+\i ku &=g &&\qquad \text{on } \Ga,
\end{alignat}
Clearly, the vector-valued function (often called the flux variable)
$\sig$ is the auxiliary variable.

Then, multiplying \eqref{e2.2} and \eqref{e2.3} by
test functions $\overline{\ta}$ and $\overline{v}$, respectively, and
integrating both equations over an element $K\in \cT_h$ yields
\begin{align}
\int_K \sig\cdot \overline{\ta}\,dx &=-\int_{K} u\, \Div\overline{\ta}\,dx
+\int_{\pa K} u\,\vec{n}_K\cdot \overline{\ta}\,ds, \\
\int_K \sig\cdot \nabla \overline{v}\,dx - k^2\int_K u \overline{v}\,dx
&=\int_K f \overline{v}\,dx +\int_{\pa K} \sig\cdot \vec{n}_{K} \overline{v}\,ds,
\end{align}
where $\vec{n}_K$ denotes the unit outward normal vector to $\pa K$.
The above equations form the weak formulation one uses
to define LDG methods for the Helmholtz problem
\eqref{helm-1}--\eqref{helm-2}.

Next, we define LDG spaces as follows
\begin{align*}
V_h &:=\{v\in L^2(\Ome);\, \re(v)|_K, \im(v)\in P_r(K)\,\,\, \forall K\in \cT_h\},\\
\Sig_h &:=\{\ta\in (L^2(\Ome))^d;\,\, \re(\ta)|_K, \im(\ta)\in (P_\ell(K))^d\,\,\,
\forall K\in \cT_h\},
\end{align*}
where $P_r(K)\, (r\geq 1)$ stands for the set of all polynomials
of degree less than or equal to $r$ on $K$.

Finally, we are ready to define the following general LDG
formulation: Find $(u_h,\sig_h)\in V_h\times \Sig_h$ such that
for all $K\in \Gamma_h$ there hold
\begin{align}\label{sec2:LDGa}
\int_K \sig_h\cdot \overline{\ta}_h\,dx
&=-\int_{K} u_{h}\,\Div \overline{\ta}_h\,dx
+\int_{\pa K} \hat{u}_K\,\vec{n}_K\cdot \overline{\ta}_h\,ds, \\
\int_K\sig_h\cdot \nabla \overline{v}_h\,dx - k^2\int_K u_h \overline{v}_h\,dx
&=\int_K f \overline{v}_h\,dx
+\int_{\pa K}\hat{\sig}_K\cdot \vec{n}_{K} \overline{v}_h\,ds
\label{sec2:LDGb}
\end{align}
for any $(v_h,\ta_h)\in V_h\times \Sig_h$. Where the quantities
$\hat{u}_K$ and $\hat{\sig}_K$, which
are called numerical fluxes,  are respectively approximations
to $\sig=\nabla u$ and $u$ on the boundary $\pa K$
of $K$. As it is well-known now that the most important issue
for all LDG methods is how to choose the numerical fluxes.
The different choices of the numerical fluxes obviously lead
to different LDG methods. It is easy to understand that
these numerical fluxes must be chosen carefully in order to
ensure the stability and accuracy of the resulted LDG methods.

In this paper, we shall only consider the {\em linear element} case
(i.e., $r=\ell=1$) and propose two sets of numerical fluxes
$(\hat{u}_K, \hat{\sig}_K)$, which lead to two LDG methods.
Our choices of numerical fluxes are inspired by the interior
penalty discontinuous Galerkin (IPDG) methods proposed by
Feng and Wu \cite{fw08a} and are identified with the help of the
unified DG framework of \cite{ABCM2002} which bridges the primal
DG formulations (e.g. IPDG methods) and the flux DG
formulations (i.e., LDG methods).

\begin{enumerate}
\item {\em LDG method \#1}: Set
\begin{alignat*}{3}
&\hat{\sig}_K=\{ \nabla_h u_h\} -\i \beta [[u_h]],\quad
&&\hat{u}_K=\{u_h\}+\i \delta [[\nabla_h u_h]]
&&\quad\mbox{on } e\in \cE_h^I,\\
&\hat{\sig}_K=-\i ku_h\vec{n}_K +g\vec{n}_K,\quad
&&\hat{u}_K=u_h &&\quad\mbox{on } e\in \cE_h^B.
\end{alignat*}

\item {\em LDG method \#2}: Set
\begin{alignat*}{3}\
&\hat{\sig}_K=\{\sig_h\} -\i \beta [[u_h]],\quad
&&\hat{u}_K=\{u_h\} +\i\delta [[ \sig_h]]
&&\quad\mbox{on } e\in \cE_h^I,\\
&\hat{\sig}_K=-\i ku_h\vec{n}_K +g\vec{n}_K,\quad
&&\hat{u}_K=u_h &&\quad\mbox{on } e\in \cE_h^B.
\end{alignat*}
\end{enumerate}
Where $\beta$ and $\delta$ are positive constants to be specified later
and $\nabla_h$ denotes the piecewisely defined gradient operator
over $\cT_h$, that is, $\nabla_h|_K=\nabla|_K\,\,\forall K\in \cT_h$.

For the reader's convenience, we now briefly sketch the derivation
of the primal DG formulation corresponding to our LDG method \#1 by
adapting the derivation given in the general framework of \cite{ABCM2002}.

Substituting the numerical fluxes of LDG method \#1 into \eqref{sec2:LDGa}
and \eqref{sec2:LDGb}, summing the resulting
equations over all element $K\in\cT_h$ and using the
following integration by parts identity
\begin{align*}
(u_h,\Div\ta_h)_\Ome &=-(\nabla_h u_h,\ta_h)_{\Ome}
+\sum_{e\in \cE_h^I}\Bigl( \Langle \{u_h\},[[\ta_h]]\Rangle_e \\
&\hskip 1.1in
+\Langle [[u_h]],\{\ta_h\}\Rangle_e \Bigr)
+\Langle u_h,\vec{n}_K\cdot \ta_h\Rangle_{\Ga},
\end{align*}
we get
\begin{align}\label{e2.9a}
&(\sig_h,\ta_h)_{\Ome}-(\nabla_h u_h,\ta_h)_{\Ome} \\
&\hskip 0.6in
-\sum_{e\in \cE_h^I} \Bigl( \i\delta\Langle[[\nabla_h u_h]],[[\ta_h]]\Rangle_{e}
-\Langle [[u_h]],\{\ta_h\}\Rangle _{e}\Bigr)=0, \nonumber \\
&(\sig_h,\nabla_h v_h)_{\Ome}-k^{2}(u_h,v_h)_{\Ome}
-\sum_{e\in \cE_h^I} \Langle\{\nabla_h u_h\}
-\i \beta [[ u_h]],[[v_h]]\Rangle _{e}  \label{e2.9b}\\
&\hskip 1.6in
+\i k\Langle u_h,v_h\Rangle_{\Ga}=(f,v_h)_\Ome
+\Langle g,v_h\Rangle_{\Ga}. \nonumber
\end{align}

Setting $\ta=\nab v_h$ in \eqref{e2.9a} and subtracting the resulting
equation from \eqref{e2.9b} then leads to the following formulation:
\begin{equation} \label{e2.10}
\mathcal{A}_h(u_h,v_h)-k^2(u_h,v_h)_\Ome =F(v_h)\qquad \forall v_h\in V_h,
\end{equation}%
where
\begin{align*}
\mathcal{A}_h( u_h,v_h)
&:=(\nabla_h u_h,\nabla_h v_h)_{\Ome}+\i k\Langle u_h,v_h\Rangle_{\Ga}\\
&\qquad
+\sum_{e\in \cE_h^I}\i\Bigl(\delta\Langle [[\nabla_h u_h]],[[\nabla_h v_h]]\Rangle_e
+\beta \Langle [[u_h]],[[v_h]]\Rangle_e\Bigr) \\
&\qquad
-\sum_{e\in \cE_h^I} \Bigl(\Langle [[u_h]],\{\nabla_h v_h\}\Rangle_e
+\Langle \{ \nabla_h u_h\},[[v_h]]\Rangle_e \Bigr). \\
F(v_h)&:= (f,v_h)_\Ome +\Langle g,v_h\Rangle_{\Ga}.
\end{align*}

Hence, \eqref{e2.10} is the corresponding primal (i.e., IPDG) formulation
of our LDG method \#1. Comparing \eqref{e2.10} with the IPDG formulation
of \cite{fw08a}, we see that all the interior penalty terms of
\eqref{e2.10} also appear in the IPDG formulation of \cite{fw08a}.
In fact, it is exactly by reversing the order of the above derivation
that leads to the discovery of the numerical fluxes of LDG method \#1.

\section{Stability analysis}\label{sec-3}
From their constructions, it is easy to see that both LDG methods proposed
in the previous section are consistent schemes for the Helmholtz
problem \eqref{helm-1}--\eqref{helm-2}.  For coercive elliptic
(and parabolic) problems, the stability of such a numerical
scheme can be proved easily as demonstrated in \cite{ABCM2002}
(the same statement is true for their corresponding PDE
stability analysis).  However, the Helmholtz
problem \eqref{helm-1}--\eqref{helm-2} is an indefinite
problem and it becomes notoriously non-coercive for large
wave number $k$. Deriving its stability estimates (i.e.,
a priori estimates of its PDE solution), particularly
wave-number-dependent estimates, has been proved not to be an easy
job (cf. \cite{Cummings01,Cummings_Feng06,hetmaniuk07,melenk95}
and the references therein). Numerically, such a quest has
been known to be even harder because of the lower order of the smoothness
and the inflexibility of (piecewise) approximation functions
(cf. \cite{Cummings01,ib95a,melenk95}
and the references therein). The stability of the numerical
methods in all the above quoted references was proved under some
very restrictive mesh constraints. An open question was then
raised by Zienkiewicz \cite{zienkiewicz00} which asks
whether it is possible to construct an absolutely stable
(and optimally convergent) numerical method (i.e., no restriction
on the mesh size $h$ and the wave number $k$) for the Helmholtz equation.
Almost a decade later Feng and Wu \cite{fw08a,fw08b} were able to
design for the first time such numerical methods, which happen to
belong to the IPDG family, for the Helmholtz problem
\eqref{helm-1}--\eqref{helm-2}.

The goal of this section is to show that in the case of the
linear element (i.e., $r=\ell=1$) the LDG method \#1 and \#2
proposed in the previous section have the same stability
property as the IPDG methods of \cite{fw08a,fw08b} do, that is,
the LDG method \#1 and \#2 are absolutely stable
for all mesh size $h>0$ and all wave number $k>0$
without imposing any constraint on them. To establish this result,
unlike the approach used and advocated in the general framework
of \cite{ABCM2002} which converts an LDG method into its
equivalent primal method and then analyzes the latter using
the standard finite element Galerkin techniques,
we shall directly fit the LDG method \#1 and \#2 into the
(nonconforming) mixed method framework as done in \cite{ib95a}
and adapting the mixed method techniques to prove the desired
stability result. It is well-known that the key ingredient
for the mixed method approach is to establish the {\em inf-sup}
or Babu{\v{s}}ka-Brezzi condition for the (augmented)
sesquilinear forms for each method.  However, we are
not able to prove such an {\em inf-sup} condition for either
method without imposing mesh constraints (which we believe
is not possible). To overcome the difficulty, our main idea
is to prove a generalized
(and weaker) {\em inf-sup} condition which holds for all
$h,k>0$. It turns out that this generalized {\em inf-sup}
condition is sufficient for us to establish the desired
absolute stability for the LDG method \#1 and \#2.
To prove the generalized {\em inf-sup} condition, the key
technique we use is a special test function technique
which was first introduced and developed in \cite{fw08a}.

\subsection{Absolute stability of LDG method \#1} \label{sec-3.1}

We first recast the LDG method \#1 as the following nonconforming
mixed method, which is
easily obtained by adding \eqref{e2.9a} and \eqref{e2.9b}.
Find $(u_h,\sig_h)\in V_h\times \Sig_h$ such that
\begin{equation}\label{e3.1}
A_h(u_h,\sig_h;v_h,\ta_h) =F(v_h,\ta_h) \qquad \forall (v_h,\ta_h)\in
V_h\times\Sig_h,
\end{equation}
where
\begin{align}\label{e3.2}
&A_h(w_h,\ch_h;v_h,\ta_h)
=(\ch_h,\nabla_h v_h)_\Ome - k^2(w_h,v_h)_\Ome
+\i k\Langle w_h,v_h\Rangle_\Ga \\
&\hskip 1.4in
-\sum_{e\in \cE_h^I}\Langle \{\nabla_h w_h\}-\i \beta [[w_h]],[[v_h]] \Rangle_e
\nonumber \\
&\hskip 1.4in
-\sum_{e\in \cE_h^I} \Bigl( \i\delta\Langle [[\nabla_h w_h]],[[\ta_h]]\Rangle_e
-\Langle [[w_h]],\{\ta_h\} \Rangle_e \Bigr) \nonumber \\
&\hskip 1.4in
+(\ch_h,\ta_h)_\Ome -(\nabla_h w_h,\ta_h)_\Ome. \nonumber \\
&F(v_h,\ta_h):= (f,v_h)_\Ome +\Langle g,v_h\Rangle_{\Ga}.\label{e3.3}
\end{align}

\subsubsection{\bf A generalized {\em inf-sup} condition}\label{sec-3.1.2}
The goal of this subsection is to show that the
sesquilinear form $A_h$ defined in \eqref{e3.2}
satisfies a generalized {\em inf-sup} condition, which will play a vital
role for us to establish the absolute stability of
the LDG method \#1 in the next subsection.

\begin{proposition}\label{prop3.1}
There exists an $h$- and $k$-independent constant $c_1>0$ such that
for any $(w_h,\ch_h)\in V_h\times\Sig_h$
\begin{align}\label{e3.4a}
&\sup_{(v_h,\ta_h)\in V_h\times\Sig_h\atop v_h\neq 0}
\frac{\re A_h(w_h,\ch_h;v_h,\ta_h)}{\|v_h\|_{DG} } \\
&\hskip 0.8in
+\sup_{(v_h,\ta_h)\in V_h\times\Sig_h\atop v_h\neq 0}
\frac{\im A_h(w_h,\ch_h;v_h,\ta_h)}{\|v_h\|_{DG} }
\geq \frac{c_1}{\gamma_1} \|w_h\|_{DG},  \nonumber
\end{align}
where
\begin{align}\label{e3.4b}
&\gamma_1 := 1+k+\sqrt{\frac{\beta}{\delta}}
+\max_{e\in \cE_h^I} \Bigl( \frac{k^2+1}{\beta h_e}+\frac{1}{h_e^2}
+\frac{1}{\beta h_e^3} \Bigr), \\
&\|w_h\|_{DG}:= \Bigl( k^2\|w_h\|_{L^2(\Ome)}^2
+k^2 \|w_h\|_{L^2(\Gamma)}^2 + c_\Ome \|\nab_h w_h\|_{L^2(\Gamma)}^2
+ |w_h|_{1,h}^2 \Bigr)^{\frac12}, \label{e3.4c}\\
&|w_h|_{1,h}:=  \Bigl(\sum_{K\in\cT_h}  \|\nab w_h\|_{L^2(K)}^2 \Bigr)^\frac12 .\notag
\end{align}

\end{proposition}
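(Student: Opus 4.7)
The plan is to adapt the Rellich-multiplier strategy developed in \cite{fw08a} for the IPDG analysis to the present LDG/mixed setting, making essential use of the freedom to pick different test pairs $(v_h,\ta_h)$ for the two suprema in \eqref{e3.4a}: one ``diagonal'' pair to extract the dissipative (imaginary) ingredients of $\|w_h\|_{DG}^2$, and one ``Rellich'' pair to extract the principal (real) ingredients.

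For the imaginary supremum I would test with $(v_h,\ta_h)=(w_h,\ch_h)$. In $A_h(w_h,\ch_h;w_h,\ch_h)$ the self-adjoint pieces $-k^2\|w_h\|^2_{L^2(\Ome)}$ and $\|\ch_h\|^2_{L^2(\Ome)}$ contribute no imaginary part, the antisymmetric coupling $(\ch_h,\nab_h w_h)_\Ome-(\nab_h w_h,\ch_h)_\Ome$ reduces to $2\i\,\im(\ch_h,\nab_h w_h)_\Ome$, and the explicitly imaginary entries of $A_h$ deliver positive contributions $k\|w_h\|^2_{L^2(\Ga)}$, $\beta\sum_{e\in\cE_h^I}\|[[w_h]]\|^2_{L^2(e)}$ and $\delta\sum_{e\in\cE_h^I}\|[[\nab_h w_h]]\|^2_{L^2(e)}$. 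Absorbing the antisymmetric coupling and the mixed jump pairings by Cauchy--Schwarz, and rescaling the test pair by a factor of $k$, this test controls the two penalty norms together with $k^2\|w_h\|^2_{L^2(\Ga)}$.

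For the real supremum I would introduce the affine Rellich multiplier $\al(x):=x-x_\Ome$ and take the pair
\begin{equation*}
v_h^{(2)}:=\al\cdot\nab_h w_h,\qquad \ta_h^{(2)}:=-\nab_h w_h,
\end{equation*}
which are admissible in the linear case since $\nab w_h$ is piecewise constant and $\al$ is affine (in particular $\nab v_h^{(2)}=\nab_h w_h$ element-wise). The algebraic virtue of this choice is that the $\ch_h$-pairings $(\ch_h,\nab_h v_h^{(2)})_\Ome=(\ch_h,\nab_h w_h)_\Ome$ and $(\ch_h,\ta_h^{(2)})_\Ome=-(\ch_h,\nab_h w_h)_\Ome$ cancel in $A_h$, leaving $-(\nab_h w_h,\ta_h^{(2)})_\Ome=\|\nab_h w_h\|^2_\Ome=|w_h|_{1,h}^2$. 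The element-wise Rellich identity applied to the $L^2$ coupling $-k^2(w_h,v_h^{(2)})_\Ome$ then produces $\tfrac{d\,k^2}{2}\|w_h\|^2_{L^2(\Ome)}$ plus a boundary contribution $-\tfrac{k^2}{2}\int_\Ga\al\cdot\vec{n}\,|w_h|^2$ which is absorbed by the (rescaled) first test, while the analogous identity for $|\nab w_h|^2$ summed over $K\in\cT_h$ bounds $c_\Ome\|\nab_h w_h\|^2_{L^2(\Ga)}$ by $|w_h|^2_{1,h}$ modulo interior jump errors that are in turn controlled by $\delta\sum_e\|[[\nab_h w_h]]\|^2_{L^2(e)}$.

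Forming a real linear combination of the two test pairs with coefficients tuned so that every cross and error term is absorbed, and using the easy bound $\|v_h^{(2)}\|_{DG}\lesssim\|w_h\|_{DG}$, yields the claimed inequality with all absorption constants packaged into $\gamma_1$. The main obstacle I anticipate is the treatment of residual cross terms in which $\ch_h$ is paired against $\{\nab_h v_h^{(2)}\}$ or in which $\{\ch_h\}$ is paired against jumps of $w_h$; because $\ch_h$ is an \emph{independent} discrete variable that is not pointwise equal to $\nab_h w_h$, these cannot be cancelled algebraically. My intended remedy is to invoke the first LDG equation \eqref{sec2:LDGa} at the discrete level to rewrite $\ch_h$ as a discrete gradient of $w_h$ plus weighted jump terms, and then to absorb the result via Young's inequality and the standard inverse estimates $\|v\|_{L^2(e)}\lesssim h_e^{-1/2}\|v\|_{L^2(K)}$; this last step is precisely what forces the $\sqrt{\beta/\delta}$, $(k^2+1)/(\beta h_e)$, $1/h_e^2$ and $1/(\beta h_e^3)$ contributions to appear in $\gamma_1$ of \eqref{e3.4b}.
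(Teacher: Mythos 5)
Your Rellich pair $(v_h,\ta_h)=(\al\cdot\nab_h w_h,\,-\nab_h w_h)$ is exactly the paper's second test function, but your first (``diagonal'') pair $(w_h,\ch_h)$ breaks the argument, and the proposed repair is inadmissible. Concretely: in $A_h$ the $\delta$-term pairs $[[\nab_h w_h]]$ with $[[\ta_h]]$, so testing with $\ta_h=\ch_h$ yields the indefinite cross term $-\delta\sum_{e\in\cE_h^I}\re\Langle [[\nab_h w_h]],[[\ch_h]]\Rangle_e$ in the imaginary part, \emph{not} the positive contribution $\delta\sum_e\|[[\nab_h w_h]]\|_{L^2(e)}^2$ you claim; you also pick up $2\im(\ch_h,\nab_h w_h)_\Ome$ and $\im\sum_e\Langle [[w_h]],\{\ch_h\}\Rangle_e$. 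Since Proposition \ref{prop3.1} is asserted for \emph{arbitrary} $(w_h,\ch_h)\in V_h\times\Sig_h$, the variable $\ch_h$ is completely unconstrained: taking $\ch_h=-\i\, t\,\nab_h w_h$ with $t\to\infty$ makes $2\im(\ch_h,\nab_h w_h)_\Ome=-2t\,|w_h|_{1,h}^2\to-\infty$, and there is nothing to absorb these terms into, because the target $\|w_h\|_{DG}$ contains no $\ch_h$ at all. Your fallback --- invoking the first LDG equation \eqref{sec2:LDGa} to rewrite $\ch_h$ as a discrete gradient plus jump corrections --- cannot be used here: that relation holds only for discrete solutions, whereas the generalized inf-sup must hold for every pair. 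This is precisely the structural distinction the paper draws between Proposition \ref{prop3.1} (stated on all of $V_h\times\Sig_h$, proof entirely $\ch_h$-free) and Proposition \ref{prop3.2} for $B_h$ (stated only on the constrained set $\mathcal{S}_h$, where the diagonal test $(w_h,\ch_h)$ \emph{is} used and the constraint, via \eqref{sec3.2:lem1}--\eqref{sec3.2:lem3}, converts the cross terms into $+\delta\sum_e\|[[\ch_h]]\|_{L^2(e)}^2$). Your route would at best prove a constrained analogue of a different proposition, not the stated one.

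The correct device --- which the paper uses and which you already exploited in your second pair --- is to observe that $\ch_h$ enters $A_h$ only through $(\ch_h,\nab_h v_h)_\Ome+(\ch_h,\ta_h)_\Ome$, so choosing $\ta_h=-\nab_h w_h$ in the \emph{first} pair as well, i.e.\ testing with $(w_h,-\nab_h w_h)$, cancels all $\ch_h$-dependence identically and gives $\im A_h(w_h,\ch_h;w_h,-\nab_h w_h)= k\|w_h\|_{L^2(\Ga)}^2+\sum_{e\in\cE_h^I}\bigl(\delta\|[[\nab_h w_h]]\|_{L^2(e)}^2+\beta\|[[w_h]]\|_{L^2(e)}^2\bigr)$, exactly the dissipative terms your absorption scheme needs. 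Two further quantitative slips: rescaling the test pair by $k$ is a no-op, since the quotients in \eqref{e3.4a} are homogeneous of degree zero in the test functions; in the paper the missing powers of $k$ and $h_e$ are instead produced by weighting the imaginary part with the large multiplier $\cM_2\simeq\gamma_1$ before combining with the real part, which is the sole source of the $1/\gamma_1$ loss in \eqref{e3.4a}. Also, $\|\al\cdot\nab_h w_h\|_{DG}\lesssim(1+k)\|w_h\|_{DG}$ rather than $\lesssim\|w_h\|_{DG}$, because $\|\cdot\|_{DG}$ weights the $L^2$ terms by $k^2$ while $|w_h|_{1,h}$ carries no such weight; this extra factor $(1+k)$ is likewise absorbed into $\gamma_1$.
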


\begin{proof}
The main idea of the proof is that for a fixed
$(w_h,\ch_h)\in V_h\times\Sig_h$ we need to pick up
two sets of special functions $(v_h,\ta_h)\in V_h\times\Sig_h$,
which, as expected, must depend on $(w_h,\ch_h)\in V_h\times\Sig_h$,
such that both quotients in \eqref{e3.4a} can be bounded
from below by $\|w_h\|_{DG}$. When that is done,
the {\em inf-sup} constant $c_1/\gamma_1$ will be revealed in the process.
Since the proof is very long, we divide it into four steps.

\medskip
{\em Step 1: Taking the first test function.}

We first choose the test function $(v_h,\ta_h)=(w_h,-\nabla_h w_h)$ to get
\begin{align*}
A_h(w_h,\ch_h;v_h,\ta_h)&=A_h(w_h,\ch_h;w_h,-\nabla_h w_h) \\
&=(\nabla_h w_h,\nabla_h w_h)_\Ome -k^2(w_h,w_h)_\Ome
+\i k\Langle w_h,w_h \Rangle_\Ga  \nonumber\\
&\qquad
+\sum_{e\in \cE_h^I} \Bigl( \i\delta\Langle [[\nabla_h w_h]],[[\nabla_h w_h]]\Rangle_e
-\Langle [[ w_h]],\{\nabla_h w_h\} \Rangle_e \Bigr) \nonumber\\
&\qquad
-\sum_{e\in \cE_h^I}\Langle \{\nabla_h w_h\}-\i\beta[[w_h]],[[w_h]] \Rangle_e.
\nonumber
\end{align*}
Taking the real and imaginary parts yields
\begin{align}\label{e3.5a}
&\re A_h(w_h,\ch_h;w_h,-\nabla_h w_h) \\
&\hskip 0.7in
=|w_h|_{1,h}^2 - k^2\|w_h\|_{L^2(\Ome)}^2
-2\re \sum_{e\in \cE_h^I} \Langle [[w_h]],\{\nabla_h w_h\}\Rangle_e,
\nonumber\\
&\im A_h(w_h,\ch_h;w_h,-\nabla_h w_h) \label{e3.5b}\\
&\hskip 0.7in
=\sum_{e\in \cE_h^I} \Bigl( \delta \|[[\nabla_h w_h]]\|_{L^2(e)}^2
+\beta \|[[w_h]]\|_{L^2(e)}^2 \Bigr)
+k\|w_h\|_{L^2(\Ga)}^2. \nonumber
\end{align}

\medskip
{\em Step 2: Taking the second test function.}

Inspired by the special test function technique of \cite{fw08a},
we now choose another test function
$(v_h,\ta_h)=(\al\cdot \nabla_h w_h,-\nabla_h w_h)$
with $\al:=x-x_\Ome$ (see Definition \ref{def1}) and
use the fact that $\nabla_h v_h=\nabla_h w_h$ to get
\begin{align*}
A_h(w_h,\ch_h;v_h,\ta_h)&= A_h(w_h,\ch_h;\al\cdot \nabla_h w_h, -\nabla_h w_h)
\\
&=(\nabla_h w_h,\nabla_h w_h)_\Ome -k^2(w_h,\al\cdot \nabla_h w_h)_\Ome
+\i k\Langle w_h,\al\cdot \nabla_h w_h\Rangle_\Ga\\
&\qquad
+\sum_{e\in \cE_h^I} \Bigl( \i\delta \Langle [[\nabla_h w_h]],[[\nabla_h w_h]]\Rangle_e
-\Langle [[w_h]],\{\nabla_h w_h\} \Rangle_e \Bigr) \\
&\qquad
-\sum_{e\in \cE_h^I} \Langle \{\nabla_h w_h\}
-\i \beta [[w_h]],[[\al\cdot \nabla_h w_h]]\Rangle_e.
\end{align*}

Taking the real part immediately gives (note that $v_h=\al\cdot \nabla_h w_h$)
\begin{align}\label{e3.6a}
&\re A_h(w_h,\ch_h;v_h,-\nabla_h w_h) \\
&\qquad
=|w_h|_{1,h}^2 - k^2(w_h,v_h)_\Ome -k\im \Langle w_h,v_h\Rangle_\Ga
-\sum_{e\in \cE_h^I} \beta \im \Langle [[w_h]],[[v_h]]\Rangle_e\notag\\
&\hskip1in
-\sum_{e\in \cE_h^I}\re\Big(\Langle [[w_h]],\{\nabla_h w_h\}\Rangle_e
+ \Langle \{\nabla_h w_h\},[[v_h]]\Rangle_e
 \Bigr).\nonumber
\end{align}

\medskip
{\em Step 3: Deriving an upper bound for $k^2\|w_h\|_{L^2(\Ome)}^2$.}

To bound \eqref{e3.5a} from below, we need to get an
upper bound for the term $k^2\|w_h\|_{L^2(\Ome)}^2$ on
the right hand side of \eqref{e3.5a}. This will be done
by carefully and judicially combining \eqref{e3.5b} and
\eqref{e3.6a} with some other differential identities, which
we now explain.

Using the integral identity
\begin{equation}\label{e3.6extra}
2\|w_h\|_{L^2(K)}^2=\int_{\pa K}\al\cdot \vec{n}_{K} |w_h|^2\, ds
-2\re(w_h,v_h)_K - (d-2)\|w_h\|_{L^2(K)}^2,
\end{equation}
and \eqref{e3.5a}, \eqref{e3.6a} we get
\begin{align}\label{e3.6b}
&2k^2\|w_h\|_{L^2(\Ome)}^2 = 2\re A_h(w_h,\ch_h,v_h,-\nabla_h w_h)\\
&\qquad
+(d-2)\re A_h(w_h,\ch_h;w_h,-\nabla_h w_h) \nonumber\\
&\qquad
+2k\im\Langle w_h,v_h\Rangle_\Ga
-2|w_h|_{1,h}^2 - (d-2)|w_h|_{1,h}^2 \nonumber\\
&\qquad
+2\re \sum_{e\in \cE_h^I} \Bigl( \Langle
[[w_h]],\{\nabla_h w_h\} \Rangle_e
+\Langle \{\nabla w_h\},[[v_h]]\Rangle_e \Bigr)  \nonumber \\
&\qquad
+ 2(d-2)\re \sum_{e\in \cE_h^I} \Langle [[w_h]],\{\nabla_h w_h\} \Rangle_e \nonumber \\
&\qquad
+\sum_{e\in \cE_h^I} \Bigl( 2\beta \im\Langle [[w_h]],[[v_h]]\Rangle_e
+k^2\sum_{K\in\cT_h} \int_{\pa K} \al\cdot \vec{n}_K |w_h|^2\, ds \Bigr).
\nonumber
\end{align}

By the elementary identity $|a|^2-|b|^2=\re (a+b)(\bar{a}-\bar{b})$
for any two complex numbers $a$ and $b$ we have
\begin{equation}\label{e3.6c}
\sum_{K\in \cT_h}\int_{\pa K}\al\cdot \vec{n}_{K} |w_h|^2\,ds
=2\re\sum_{e\in \cE_h^I}\Langle \al\cdot \vec{n}_e\{ w_h\},[w_h]\Rangle_e
+\Langle \al\cdot \vec{n}_\Ome,|w_h|^2\Rangle_\Ga.
\end{equation}
Next, using the local Rellich identity (see \cite[Lemma 4.1]{fw08a})
\begin{equation*}
(d-2)\|\nabla w_h\|_{L^2(K)}^2 +2 \re (\nabla w_h,\nabla v_h)_K
=\int_{\pa K} \al\cdot \vec{n}_K |\nabla v_h|^2\, ds
\end{equation*}
and the fact that $\nabla_h v_h=\nabla_h w_h$, we get
\begin{align}\label{e3.7}
d |w_h|_{1,h}^2 &= (d-2)|w_h|_{1,h}^2
+ 2\re\sum_{K\in \cT_h} (\nabla w_h,\nabla v_h)_K \\
&= \re\sum_{K\in \cT_h} \int_{\pa K} \al\cdot \vec{n}_K |\nabla v_h|^2\, ds
\nonumber \\
&=2\re \sum_{e\in \cE_h^I} \Langle \al\cdot \vec{n}_e \{\nabla_h w_h\},
[\nabla_h w_h]\Rangle_e
+\sum_{e\in \cE_h^B} \Langle \al \cdot \vec{n}_e,|\nabla_h w_h|^{2}\Rangle_e.
\nonumber
\end{align}
Substituting \eqref{e3.6c} and \eqref{e3.7} into \eqref{e3.6b} we get
\begin{align}\label{e3.8}
&2k^2\|w_h\|_{L^2(\Ome)}^2
=2\re A_h(w_h,\ch_h;v_h,-\nabla_h w_h) \\
&\qquad
+(d-2)\re A_h(w_h,\ch_h;w_h,-\nabla_h w_h) \nonumber\\
&\qquad
+2k^2 \re\sum_{e\in \cE_h^I}\Langle \al\cdot \vec{n}_e\{w_h\},[w_h]\Rangle_e
+k^2\Langle \al\cdot \vec{n}_\Ome,|w_h|^2\Rangle_\Ga \nonumber \\
&\qquad
+2k\im \Langle w_h,v_h\Rangle_\Ga
-\sum_{e\in \cE_h^B}\Langle \al\cdot \vec{n}_e,|\nabla_h w_h|^{2}\Rangle_e \nonumber \\
&\qquad
-2\re\sum_{e\in \cE_h^I} \Bigl(\Langle\al\cdot \vec{n}_e\{\nabla_h w_h\},
[\nabla_h w_h] \Rangle_e
-\Langle \{\nabla_h w_h\},[[v_h]]\Rangle_e \Bigr) \notag \\
&\qquad
+2(d-1)\re \sum_{e\in \cE_h^I}\Langle [[w_h]],\{\nabla_h w_h\}\Rangle_e
+2\im \sum_{e\in \cE_h^I}\beta \Langle [[w_h]],[[v_h]]\Rangle_e .
 \notag
\end{align}

To get an upper bound for $k^2\|w_h\|_{L^2(\Ome)}^2$, we need to
bound the terms on the right-hand side of \eqref{e3.8}, which
we now bound as follows.
\begin{align} \label{e3.9}
2k^2\re \sum_{e\in \cE_h^I}\Langle \al\cdot \vec{n}_{e}\{w_h\},[w_h]\Rangle_e
&\leq Ck^2\sum_{e\in \cE_h^I} h_e^{-\frac12}\|w_h\|_{L^2(K_e\cup K_{e}^\prime)}
\|[w_h]\|_{L^2(e)} \\
&\leq \frac{k^2}{2} \|w_h\|_{L^2(\Ome)}^2
+C\sum_{e\in \cE_h^I} \frac{k^2}{\beta h_e} \beta \|[w_h]\|_{L^2(e)}^2. \notag\\
k^2\Langle \al\cdot \vec{n}_\Ome,|w_h|^2\Rangle_\Ga
&\leq Ck^2\|w_h\|_{L^2(\Ga)}^2.  \label{e3.10}
\end{align}
It follows from the star-shaped assumption on $\Ome$ that
\begin{align} \label{e3.11}
2k\im\Langle w_h,v_h\Rangle_{\Ga}
&-\sum_{e\in \cE_h^B}\Langle \al\cdot \vec{n}_e,|\nabla_h w_h|^2\Rangle_e \\
& \leq Ck\sum_{e\in \cE_h^B}\|w_h\|_{L^2(e)} \|\nabla_h w_h\|_{L^2(e)}
-c_{\Ome}\sum_{e\in \cE_h^B} \|\nabla_h w_h\|_{L^2(e)}^2  \notag \\
& \leq Ck^2\|w_h\|_{L^2(\Ga)}^2
-\frac{c_{\Ome}}{2} \|\nabla_h w_h\|_{L^2(\Gamma)}^2.  \notag
\end{align}
By the trace inequality \cite{Brenner_Scott08}, we also have
\begin{align} \label{e3.13}
&2d\re\sum_{e\in \cE_h^I}\Langle [[w_h]],\{\nabla_h w_h\}\Rangle_e \\
&\hskip 0.5in
\lesssim 2d\sum_{e\in \cE_h^I} h_e^{-\frac12}
\sum_{K=K_e,K_e^\prime} \|\nabla_h w_h\|_{L^2(K)} \|[w_h]\|_{L^2(e)} \notag \\
&\hskip 0.5in
\leq\frac14 |w_h|_{1,h}^2 +C\sum_{e\in \cE_h^I}\frac{1}{\beta h_e}
\beta \|[w_h]\|_{L^2(e)}^2.  \notag
\end{align}
\begin{align} \label{e3.14}
& -2\re\sum_{e\in \cE_h^I} \Bigl( \Langle \al\cdot \vec{n}_e\{\nabla_h w_h\}, [\nabla_h w_h]\Rangle_e
- \Langle \{\nabla_h w_h\},[[v_h]]\Rangle_e \Bigr) \\
& \hskip0.8in
=2\re \sum_{e\in \cE_h^I} \left[ \sum_{j=1}^{d-1}
\int_e \Bigl((\al\cdot \ta_e^j) \{\nabla_h w_h\cdot \vec{n}_e\}\notag\right.  \\
& \hskip 1.8in \left.
-(\al\cdot \vec{n}_e) \{\nabla_h w_h\cdot \ta_e^j\} \Bigr)
\nabla_h[w_h]\cdot\ta_e^j \right] \notag \\
& \hskip0.8in
\lesssim \sum_{e\in \cE_h^I}\sum_{j=1}^{d-1}h_e^{-\frac12}
\sum_{K=K_e,K_e^{\prime }} \|\nabla_h w_h\|_{L^2(K)}
\|[\nabla_h w_h\cdot \ta_e^j]\|_{L^2(e)} \notag \\
& \hskip0.8in
\leq \frac14 |w_h|_{1,h}^2 + C\sum_{e \in \cE_h^I}\frac{1}{\beta h_e}
\sum_{j=1}^{d-1}\beta \|[\nabla_h w_h\cdot \ta_e^j]\|_{L^2(e)}^2.  \notag
\end{align}
By the definition of $v_h:=\al\cdot \nabla_h w_h$, we get
\begin{align} \label{e3.12}
&2\im\sum_{e\in \cE_h^I}\beta \Langle [[w_h]],[[v_h]]\Rangle_e
=2\im\sum_{e\in \cE_h^I}\beta \Langle [w_h],[v_h]\Rangle_e\\
&\quad
=2\im\sum_{e\in \cE_h^I}\beta \Langle [w_h],
\Bigl[(\al\cdot \vec{n}_e) \nabla_h w_h\cdot\vec{n}_e
+\sum_{j=1}^{d-1}(\al\cdot \ta_e^j) \nabla_h w_h\cdot \ta_e^j \Bigr]
\Rangle_e \nonumber \\
&\quad
 \leq C\sum_{e\in \cE_h^I}\beta \|[w_h]\|_{L^2(e)}
\|[\nabla_h w_h\cdot\vec{n}_e]\|_{L^2(e)} \notag \\
&\qquad
+C\sum_{e\in \cE_h^I}\beta \|[w_h]\|_{L^2(e)}
\sum_{j=1}^{d-1}\|[\nabla_h w_h\cdot \ta_e^j]\|_{L^2(e)} \notag \\
&\quad \leq C\sqrt{\frac{\beta}{\delta}}\sum_{e\in \cE_h^I}
\Bigl( \beta \|[w_h]\|_{L^2(e)}^2
+\delta \|[\nabla_h w_h\cdot\vec{n}_e]\|_{L^2(e)}^2
\Bigr)  \notag \\
&\qquad
+C\beta \sum_{e\in \cE_h^I} \Bigl( \|[w_h]\|_{L^2(e)}^2
+\sum_{j=1}^{d-1} \|[\nabla_h w_h\cdot \ta_e^j]\|_{L^2(e)}^2 \Bigr),  \notag
\end{align}
where $\{\ta_e^j\}_{j=1}^{d-1}$ denotes an orthogonal tangential frame
on the edge/face $e$, and we have used the decomposition
$\al=(\al\cdot\vec{n}_e) \vec{n}_e + \sum_{j=1}^{d-1}(\al\cdot \ta_e^j) \ta_e^j$.

Now substituting estimates \eqref{e3.9}--\eqref{e3.14} into
\eqref{e3.8} we obtain
\begin{align}\label{e3.15}
&2k^2\|w_h\|_{L^2(\Ome)}^2
\leq 2\re A_h(w_h,\ch_h;v_h,-\nabla_h w_h) \\
&\qquad
+(d-2)\re A_h(w_h,\ch_h;w_h,-\nabla_h w_h) \notag\\
&\qquad
+Ck^2\|w_h\|_{L^2(\Ga)}^2
-\frac{c_{\Ome}}{2} \|\nabla_h w_h\|_{L^2(\Gamma)}^2
+\frac{k^2}{2} \|w_h\|_{L^2(\Ome)}^2 \notag\\
&\qquad
+C\sum_{e\in \cE_h^I} \frac{k^2}{\beta h_e} \beta \|[w_h]\|_{L^2(e)}^2
-2\re\sum_{e\in \cE_h^I} \Langle [[w_h]],\{\nabla_h w_h\}\Rangle_e\notag \\
&\qquad
+\frac14 |w_h|_{1,h}^2
+C\sum_{e\in \cE_h^I} \frac{1}{\beta h_e}\sum_{j=1}^{d-1}
\beta \|[\nabla_h\cdot \ta_e^j]\|_{L^2(e)}^2 \notag \\
&\qquad
+\frac14 |w_h|_{1,h}^2 +C\sum_{e\in \cE_h^I}\frac{1}{\beta h_e}
\beta \|[w_h]\|_{L^2(e)}^2 \notag\\
&\qquad
+C\sum_{e\in \cE_h^I}\sqrt{\frac{\beta}{\delta}}
\Bigl( \beta \|[w_h]\|_{L^2(e)}^2
+\delta \|[\nab_h w_h\cdot \vec{n}_e]\|_{L^2(e)}^2 \Bigr) \notag\\
&\qquad
+C\sum_{e\in \cE_h^I} \Bigl( \beta \|[w_h]\|_{L^2(e)}^2
+\sum_{j=1}^{d-1}\beta \|[\nabla_h w_h\cdot \ta_e^j]\|_{L^2(e)}^2 \Bigr).\notag
\end{align}

On noting that \eqref{e3.5b} provides upper bounds for terms
$\|[\nabla_h w_h\cdot \vec{n}_e]\|_{L^2(e)}^2$,
$\|[w_h]\|_{L^2(e)}^2$ and $k^2 \|w_h\|_{L^2(\Ga)}^2$
in terms of $\im A_h(w_h,\ch_h;w_h,-\nabla_h w_h)$, using these
bounds in \eqref{e3.15} we get
\begin{align}\label{e3.16}
&2k^2\|w_h\|_{L^2(\Ome)}^2
+\frac{k^2}{2} \|w_h\|_{L^2(\Ga)}^2
+\frac{c_{\Ome}}{2} \|\nabla_h w_h\|_{L^2(\Gamma)}^2\\
&\qquad \leq 2 \re A_h(w_h,\ch_h;v_h,-\nabla_h w_h)
+(d-2)\re A_h(w_h,\ch_h;w_h,-\nabla_h w_h) \notag\\
&\qquad\qquad
+ \frac12 |w_h|_{1,h}^2 +\frac{k^2}{2} \|w_h\|_{L^2(\Ome)}^2
-2\re\sum_{e\in \cE_h^I} \Langle \{\nabla_h w_h\},[[w_h]]\Rangle_e \notag \\
&\qquad\qquad
+C\sum_{e\in \cE_h^I} \Bigl(\frac{1}{\beta h_e}+1\Bigr)\sum_{j=1}^{d-1}
\beta \|[\nabla_h w_h\cdot \ta_e^j]\|_{L^2(e)}^2  \notag \\
&\qquad\qquad
+ \cM_1 \im A_h(w_h,\ch_h;w_h,-\nabla_h w_h), \notag
\end{align}
where
\begin{align}\label{e3.17}
\cM_1:=C \Bigl(1+k+\sqrt{\frac{\beta}{\delta}}
+\max_{e\in \cE_h^I} \frac{k^2+1}{\beta h_e} \Bigr).
\end{align}

To bound the jumps of the tangential derivatives
$\|[\nabla_h w_h\cdot \ta_e^j]\|_{L^2(e)}^2$ in \eqref{e3.15}, we appeal to
the inverse inequality
\begin{equation}\label{e3.17extra}
\|[\nabla_h w_h\cdot \ta_e^j]\|_{L^2(e)}^2
\leq Ch_e^{-2}\|[w_h]\|_{L^2(e)}^2,
\end{equation}
and using \eqref{e3.5b} and \eqref{e3.5a} to get
\begin{align*}
&2k^2\|w_h\|_{L^2(\Ome)}^2
+\frac{k^2}{2} \|w_h\|_{L^2(\Ga)}^2
+\frac{c_{\Ome}}{2} \|\nabla_h w_h\|_{L^2(\Gamma)}^2 \\
&\qquad
\leq 2 \re A_h(w_h,\ch_h;v_h,-\nab_h w_h)
+(d-2)\re A_h(w_h,\ch_h;w_h,-\nab_h w_h) \\
&\hskip .8in
+\frac{k^2}{2} \|w_h\|_{L^2(\Ome)}^2
+\cM_2 \im A_h(w_h,\ch_h;w_h,-\nabla_h w_h) \\
& \hskip .8in
-2\re\sum_{e\in \cE_h^I} \Langle \{\nab_h w_h\},[[w_h]]\Rangle_e
+ \frac12 |w_h|_{1,h}^2  \\
&\qquad
\leq 2 \re A_h(w_h,\ch_h;v_h,-\nab_h w_h)
+(d-1)\re A_h(w_h,\ch_h;w_h,-\nab_h w_h) \\
&\hskip .8in
+ \frac{3k^2}{2} \|w_h\|_{L^2(\Ome)}^2
+\cM_2 \im A_h(w_h,\ch_h;w_h,-\nab_h w_h)
- \frac12 |w_h|_{1,h}^2,
\end{align*}
where
\begin{align}\label{e3.18}
\cM_2&=\cM_1+\max_{e\in \cE_h^I} \Bigl( \frac{C}{h_e^2} +
\frac{C}{\beta h_e^3} \Bigr) \\
&= C \Bigl(1+k+\sqrt{\frac{\beta}{\delta}}
+\max_{e\in \cE_h^I} \Bigl( \frac{k^2+1}{\beta h_e} +\frac{1}{h_e^2}
+\frac{1}{\beta h_e^3} \Bigr)\Bigr). \notag
\end{align}
Using the linearity of the sesquilinear form $A_h$ we have
\begin{align}\label{e3.18extra}
& k^2\|w_h\|_{L^2(\Ome)}^2
+k^2 \|w_h\|_{L^2(\Ga)}^2
+ c_{\Ome} \|\nabla_h w_h\|_{L^2(\Gamma)}^2 + |w_h|_{1,h}^2 \\
&\qquad
\leq 4 \re A_h(w_h,\ch_h;v_h,-\nab_h w_h)
+2(d-1)\re A_h(w_h,\ch_h;w_h,-\nab_h w_h) \notag \\
&\hspace{0.8in}
+2\cM_2 \im A_h(w_h,\ch_h;w_h,-\nab_h w_h) \notag \\
& \qquad
= \re A_h(w_h,\ch_h;\tilde{w}_h,-\nab_h w_h)
+ 2\cM_2 \im A_h(w_h,\ch_h;w_h,-\nab_h w_h). \notag
\end{align}
with $\tilde{w}_h=4v_h+2(d-1)w_h$.

\medskip
{\em Step 4: Finishing up.}

By the definition of $\|\cdot\|_{DG}$ in \eqref{e3.4c} and the fact that
$\nabla_h v_h=\nabla_h w_h$ we have
\begin{align} \label{e3.19b}
\|v_h\|_{DG}^2
&=k^2 \|\al \cdot\nab_h w_h\|_{L^2(\Ome)}^2
+k^2 \|\al \cdot\nab_h w_h\|_{L^2(\Gamma)}^2 \\
&\hskip 1.33in
+ c_\Ome \|\nab_h w_h\|_{L^2(\Gamma)}^2
+ |w_h|_{1,h}^2 \notag \\
&\leq C k^2\|\nab_h w_h\|_{L^2(\Ome)}^2
+ Ck^2 \|\nab_h w_h\|_{L^2(\Gamma)}^2 \notag \\
&\hskip 1.25in
+ c_{\Ome}\|\nab_h w_h\|_{L^2(\Gamma)}^2
+ |w_h|_{1,h}^2 \notag\\
&\leq C(1+k^2) \Bigl( |w_h|_{1,h}^2
+ c_{\Ome} \|\nab_h w_h\|_{L^2(\Gamma)}^2 \Bigr) \notag \\
&\leq C(1+k^2) \|w_h\|_{DG}^2. \notag
\end{align}
Thus, it follows from the triangle inequality that
\begin{align}\label{e3.19a}
\|\tilde{w}_h\|_{DG}
\leq 4\|v_h\|_{DG} + 2(d-1)\|w_h\|_{DG}
\leq C (1+k)\|w_h\|_{DG}.
\end{align}
Now from \eqref{e3.18extra} and \eqref{e3.19a} we have
\begin{align}\label{e3.20}
&\frac{\re A_h(w_h,\ch_h;\tilde{w}_h,-\nab_h w_h)}
{\|\tilde{w}_h\|_{DG} }
+ \frac{\im A_h(w_h,\ch_h;w_h,-\nab_h w_h)}{\|w_h\|_{DG} }\\
&\geq\frac{\re A_h(w_h,\ch_h;\tilde{w}_h,-\nab_h w_h)}
{C(1+k) \|w_h\|_{DG}}
+\frac{\im A_h(w_h,\ch_h;w_h,-\nab_h w_h)}{\|w_h\|_{DG} }\notag \\
&\geq \frac{1}{2\cM_2}\cdot
\frac{\re A_h(w_h,\ch_h;\tilde{w}_h,-\nab_h w_h)
      +2\cM_2\im A_h(w_h,\ch_h;w_h,-\nab_h w_h)}{\|w_h\|_{DG}}\notag\\
&\geq \frac{c_1}{\gamma_1}\|w_h\|_{DG} \notag
\end{align}
for some constant $c_1>0$ and $\gamma_1$ is defined by \eqref{e3.4b}.
Hence, \eqref{e3.4a} holds. The proof is complete.
\end{proof}

\begin{remark}
(a) We note that $\gamma_1$ depends on both $h$ and $k$.

(b) The generalized {\em inf-sup} condition is a weak estimate because
it does not provide a control for the variable $\ch_h$. As a comparison,
we recall that the standard {\em inf-sup} condition for the sesquilinear form
$A_h$ should be
\begin{align*}
&\sup_{(v_h,\tau_h)\in V_h\times\Sig_h}
\frac{\bigl| A_h(w_h,\ch_h;v_h,\ta_h)\bigr|}{\|(v_h,\ta_h)\|}
\geq c_1 \|(w_h,\ch_h)\| \quad\forall (w_h,\ch_h)\in V_h\times\Sig_h
\end{align*}
for some positive constant $c_1=c_1(k,\beta,\delta,\Ome)$. Where
\[
\|(w_h,\ch_h)\|:= \bigl( k^2 \|w_h\|_{L^2(\Ome)}^2 
+ \|\ch_h\|_{L^2(\Ome)}^2\bigr)^{\frac12}.
\]
However, the above standard {\em inf-sup} condition can be proved
only under the mesh constraint $h=O(k^{-2})$ and we believe that it does not
hold without a mesh constraint.
\end{remark}

\subsubsection{\bf Stability estimates} \label{sec-3.1.3}

The goal of this subsection is to establish the absolute stability
for the LDG method \#1 using the generalized {\em inf-sup} condition proved
in the previous subsection.

\begin{theorem}\label{ldg1_sta}
Let $(u_h, \sig_h)\in V^h\times\Sig_h$ solve \eqref{e3.1}.  Define
\begin{equation}\label{Mfg}
M(f,g) :=\|f\|_{\Ome} + \|g\|_{L^2(\Ga)}.
\end{equation}
Then there hold the following stability estimates:
\begin{align}\label{e3.22}
\|u_h\|_{DG}  &\lesssim \gamma_1 k^{-1}\, M(f,g).\\
\|\sig_h\|_{L^2(\Ome)} &\lesssim
\gamma_1  k^{-1}
\Bigl(1 + (\delta +k^{-1})\bigl(\max_{K\in \cT_h}h_K^{-1}\bigr) \Bigr)\,
M(f,g).
\label{e3.23}
\end{align}
\end{theorem}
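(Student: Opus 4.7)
The plan is to derive the two bounds in order by combining the generalized inf-sup condition of Proposition~\ref{prop3.1} with the LDG equations themselves. For \eqref{e3.22}, I would apply Proposition~\ref{prop3.1} to $(w_h,\ch_h)=(u_h,\sig_h)$; since \eqref{e3.1} gives $A_h(u_h,\sig_h;v_h,\ta_h)=F(v_h,\ta_h)=(f,v_h)_\Ome+\langle g,v_h\rangle_\Ga$ for every admissible test pair, it suffices to bound $|F(v_h,\ta_h)|$. The key observation is that $F$ depends only on the scalar test function $v_h$, and the weights $k\|v_h\|_{L^2(\Ome)}$ and $k\|v_h\|_{L^2(\Ga)}$ are each dominated by $\|v_h\|_{DG}$ directly from \eqref{e3.4c}. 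Cauchy-Schwarz then yields
\[
|F(v_h,\ta_h)| \leq \|f\|_{L^2(\Ome)}\|v_h\|_{L^2(\Ome)}+\|g\|_{L^2(\Ga)}\|v_h\|_{L^2(\Ga)} \leq k^{-1}M(f,g)\|v_h\|_{DG},
\]
so substituting into both quotients of \eqref{e3.4a} gives $(c_1/\gamma_1)\|u_h\|_{DG}\leq 2k^{-1} M(f,g)$, which rearranges to \eqref{e3.22}.

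For \eqref{e3.23}, the generalized inf-sup condition controls only the scalar variable and gives no direct information about $\sig_h$, so a separate argument is needed. The natural route is to test \eqref{sec2:LDGa} with $v_h=0$ and $\ta_h=\sig_h$; after summing over $K\in\cT_h$ and invoking the integration-by-parts identity that produced \eqref{e2.9a}, one arrives at
\[
\|\sig_h\|_{L^2(\Ome)}^2 = (\nabla_h u_h,\sig_h)_\Ome+\sum_{e\in\cE_h^I}\Bigl(\i\delta\langle[[\nabla_h u_h]],[[\sig_h]]\rangle_e - \langle [[u_h]],\{\sig_h\}\rangle_e\Bigr).
\]
The bulk term is bounded by $|u_h|_{1,h}\|\sig_h\|_{L^2(\Ome)}\leq\|u_h\|_{DG}\|\sig_h\|_{L^2(\Ome)}$, and for the face terms the discrete trace inequality $\|\sig_h\|_{L^2(e)}\lesssim h_K^{-1/2}\|\sig_h\|_{L^2(K)}$ extracts a factor $(\max_K h_K^{-1})^{1/2}\|\sig_h\|_{L^2(\Ome)}$. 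The remaining jump seminorms $\|[[\nabla_h u_h]]\|_{L^2(e)}$ and $\|[[u_h]]\|_{L^2(e)}$ are controlled via the identity \eqref{e3.5b} applied to $(u_h,\sig_h)$: since $\im A_h(u_h,\sig_h;u_h,-\nabla_h u_h)=\im F(u_h,-\nabla_h u_h)$ and the right-hand side is bounded by $k^{-1}M(f,g)\|u_h\|_{DG}\lesssim\gamma_1 k^{-2}M(f,g)^2$ via \eqref{e3.22}, one deduces
\[
\sum_{e\in\cE_h^I}\delta\|[[\nabla_h u_h]]\|_{L^2(e)}^2+\sum_{e\in\cE_h^I}\beta\|[[u_h]]\|_{L^2(e)}^2\lesssim\gamma_1 k^{-2}M(f,g)^2.
\]
Substituting these jump bounds, dividing through by $\|\sig_h\|_{L^2(\Ome)}$, and using $\sqrt{ab}\leq\tfrac12(a+b)$ together with $\sqrt{\gamma_1}\leq\gamma_1$ (valid since $\gamma_1\geq 1$) would then produce \eqref{e3.23}. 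A stray $\beta^{-1/2}$ arising from the $[[u_h]]$-$\{\sig_h\}$ term is traded via the $(k^2+1)/(\beta h_e)$ component of $\gamma_1$ in \eqref{e3.4b}, which gives $\beta^{-1}\lesssim\gamma_1 h_e k^{-2}$ and feeds exactly the $k^{-1}\max_K h_K^{-1}$ piece of the stated bound.

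The step I expect to be the main obstacle is the final assembly: the natural bounds coming from the trace inequality and from the imaginary-part identity mix $\delta$, $\beta$, $\gamma_1$, and $\max_K h_K^{-1}$ in square-root form, whereas \eqref{e3.23} asserts a clean linear-in-$\gamma_1$ bound with the very specific coefficient $(\delta+k^{-1})\max_K h_K^{-1}$. No new deep tool is required beyond the generalized inf-sup estimate and \eqref{e3.5b}, but one must exploit the $(k^2+1)/(\beta h_e)$ term hidden inside $\gamma_1$ to eliminate the $1/\sqrt{\beta}$ dependence, and carefully collapse the various square-root factors into the asserted form without introducing spurious mesh-dependence.
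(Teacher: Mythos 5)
Your proof of \eqref{e3.22} is exactly the paper's: apply Proposition~\ref{prop3.1} with $(w_h,\ch_h)=(u_h,\sig_h)$, replace $A_h$ by $F$ via \eqref{e3.1}, and bound $|F(v_h,\ta_h)|\leq k^{-1}M(f,g)\bigl(k^2\|v_h\|_{L^2(\Ome)}^2+k^2\|v_h\|_{L^2(\Ga)}^2\bigr)^{1/2}\leq k^{-1}M(f,g)\|v_h\|_{DG}$. For \eqref{e3.23} you start from the same identity the paper does (testing \eqref{e3.1} with $(v_h,\ta_h)=(0,\sig_h)$, so $F(0,\sig_h)=0$), but then you genuinely diverge. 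The paper applies the discrete trace inequality to \emph{both} factors of each face term, i.e.\ also $\|[[u_h]]\|_{L^2(e)}\lesssim h_K^{-1/2}\|u_h\|_{L^2(K)}$ and $\|[[\nabla_h u_h]]\|_{L^2(e)}\lesssim h_K^{-1/2}\|\nabla_h u_h\|_{L^2(K)}$, which gives directly $\|\sig_h\|_{L^2(\Ome)}^2\lesssim\bigl(1+\delta^2\max_K h_K^{-2}\bigr)|u_h|_{1,h}^2+\bigl(\max_K h_K^{-2}\bigr)\|u_h\|_{L^2(\Ome)}^2$, and then \eqref{e3.23} follows from \eqref{e3.22} (using $\|u_h\|_{L^2(\Ome)}\leq k^{-1}\|u_h\|_{DG}$) with no square roots of $\gamma_1$ and no structural information about $\gamma_1$ at all. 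You instead control the jump seminorms through the energy identity \eqref{e3.5b} combined with the equation and \eqref{e3.22}; this is legitimate, since $(u_h,-\nabla_h u_h)$ is an admissible test pair ($\nabla_h u_h\in\Sig_h$ because $r=\ell=1$) and hence $\im A_h(u_h,\sig_h;u_h,-\nabla_h u_h)=\im F(u_h,-\nabla_h u_h)\lesssim\gamma_1 k^{-2}M(f,g)^2$, giving your claimed bound on $\sum_{e\in\cE_h^I}\bigl(\delta\|[[\nabla_h u_h]]\|_{L^2(e)}^2+\beta\|[[u_h]]\|_{L^2(e)}^2\bigr)$. Your clean-up does close: $h_e^{-1/2}\delta^{1/2}\leq\tfrac12(1+\delta h_e^{-1})$ and $\gamma_1^{1/2}\leq\gamma_1$ handle the $\delta$-term, while $\beta^{-1}\leq\gamma_1 h_e(k^2+1)^{-1}\leq\gamma_1 h_e k^{-2}$, read off from the $(k^2+1)/(\beta h_e)$ component of \eqref{e3.4b}, turns the stray $h_e^{-1/2}\beta^{-1/2}$ factor into $\gamma_1^{1/2}k^{-1}$, and the resulting $\gamma_1 k^{-2}M(f,g)$ contribution is absorbed into the $k^{-1}\max_K h_K^{-1}$ piece because $h\lesssim\mathrm{diam}(\Ome)$ forces $\max_K h_K^{-1}\gtrsim 1$. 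In short: the paper's route is shorter and purely algebraic (inverse/trace inequalities on piecewise polynomials only), and it yields \eqref{e3.23} with no slack; your route reuses the PDE a second time and produces, as a by-product, wavenumber-explicit bounds on the jumps themselves, at the cost of needing $\gamma_1\geq 1$, $h\lesssim 1$, and the specific composition of $\gamma_1$ to collapse the square-root factors into the asserted linear-in-$\gamma_1$ form --- exactly the assembly issue you flagged, and which your outlined trades do resolve.
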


\begin{proof}
By Schwarz inequality we have
\begin{align}\label{e3.23a}
|F(v_h,\ch_h)| &\leq \|f\|_{L^2(\Ome)} \|v_h\|_{L^2(\Ome)}
+ \|g\|_{L^2(\Ga)} \|v_h\|_{L^2(\Ga)} \\
&\leq C k^{-1}M(f,g)\, \bigl(k^2\|v_h\|_{L^2(\Ome)}^2
+k^2\|v_h\|_{L^2(\Ga)}^2\bigr)^{\frac12}.  \notag \\
&\leq C k^{-1}  M(f,g) {\|v_h\|_{DG} }. \notag
\end{align}

Let $(w_h,\ch_h)=(u_h,\sig_h)$ in \eqref{e3.4a}.
By equation \eqref{e3.1} and \eqref{e3.23a} we get
\begin{align*}
\frac{c_1}{\gamma_1} \|u_h\|_{DG}
&\leq \sup_{(v_h,\ta_h)\in V_h\times\Sig_h}
\frac{\re A_h(u_h,\sig_h;v_h,\ta_h)}{\|v_h\|_{DG} } \\
&\hskip 0.8in
+\sup_{(v_h,\ta_h)\in V_h\times\Sig_h}
\frac{\im A_h(u_h,\sig_h;v_h,\ta_h)}{\|v_h\|_{DG} }  \\
&= \sup_{(v_h,\ta_h)\in V_h\times\Sig_h}
\frac{\re F(v_h,\ta_h)}{\|v_h\|_{DG} }
+\sup_{(v_h,\ta_h)\in V_h\times\Sig_h}
\frac{\im F(v_h,\ta_h)}{\|v_h\|_{DG} } \\
&\leq 2 \sup_{(v_h,\ta_h)\in V_h\times\Sig_h}
\frac{\bigl|F(v_h,\ta_h)\bigr|}{\|v_h\|_{DG} } \notag\\
&\leq C k^{-1} M(f,g). \notag
\end{align*}
Hence \eqref{e3.22} holds.

To show \eqref{e3.23}, setting $(v_h,\ta_h)=(0,\sig_h)$ in \eqref{e3.1} and
using the trace and Schwarz inequalities yields
\begin{align*}
\|\sig_h\|_{L^2(\Ome)}^2 &= (\nabla_h u_h,\sig_h)_{\Ome}
+\sum_{e\in \cE_h^I} \Bigl( \i\delta\Langle[[\nabla_h u_h]],[[\sig_h]]\Rangle_{e}
-\Langle [[u_h]],\{\sig_h\}\Rangle _{e}\Bigr) \\
&\leq \|\nabla_h u_h\|^2_{L^2(\Ome)} +\frac14\|\sig_h\|^2_{L^2(\Ome)}
+C\sum_{e\in \cE_h^I} \sum_{K=K_e,K_e'} h_K^{-1}
\Bigl( \delta \|\nabla_h u_h\|_{L^2(K)} \\
&\hskip 2.2in
+\|u_h\|_{L^2(K)} \Bigr)\|\sig_h\|_{L^2(K)} \\
&\leq\|\nabla_h u_h\|^2_{L^2(\Ome)} +\frac12\|\sig_h\|^2_{L^2(\Ome)} \\
&\hskip 0.7in
+C\sum_{e\in \cE_h^I} \sum_{K=K_e,K_e'}
h_K^{-2} \Bigl( \delta^2 \|\nabla_h u_h\|^2_{L^2(K)}+\|u_h\|^2_{L^2(K)}\Bigr).
\end{align*}
Thus,
\begin{align*}
\|\sig_h \|^2_{L^2(\Ome)}
&\lesssim \Bigl(1+ \delta^2 \bigl(\max_{K\in \cT_h}h_K^{-2}\bigr) \Bigr)
|u_h|^2_{1,h} + \bigl(\max_{K\in \cT_h}h_K^{-2}\bigr) \|u_h\|^2_{L^2(\Ome)}.
\end{align*}
The desired estimate \eqref{e3.23} follows from combining
the above inequality with \eqref{e3.22}. The proof is complete.
\end{proof}

An immediate consequence of the stability estimates is the following
unique solvability theorem.

\begin{theorem}\label{ldg1_existence}
There exists a unique solution to the LDG method \eqref{e3.1} for all
$k,\, h, \, \delta,\, \beta>0$.
\end{theorem}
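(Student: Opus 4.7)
The plan is to exploit the fact that \eqref{e3.1} is a square linear system posed on the finite-dimensional complex vector space $V_h\times\Sig_h$. For such a system, existence of a solution for every right-hand side is equivalent to uniqueness, which in turn is equivalent to the statement that the only solution of the homogeneous problem is the trivial one.

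Accordingly, I would first reduce the theorem to showing: if $(u_h,\sig_h)\in V_h\times\Sig_h$ satisfies
\[
A_h(u_h,\sig_h;v_h,\ta_h)=0\qquad\forall (v_h,\ta_h)\in V_h\times\Sig_h,
\]
then $(u_h,\sig_h)=(0,0)$. This is precisely the instance of \eqref{e3.1} with $f\equiv 0$ and $g\equiv 0$, so $M(f,g)=0$ by definition \eqref{Mfg}.

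Next I would directly invoke the stability estimates in Theorem \ref{ldg1_sta}. Estimate \eqref{e3.22} gives $\|u_h\|_{DG}\le C\gamma_1 k^{-1}M(f,g)=0$, hence $u_h=0$ by the definition \eqref{e3.4c} of $\|\cdot\|_{DG}$ (note that $\|w_h\|_{DG}$ dominates $k^2\|w_h\|_{L^2(\Ome)}^2$, so vanishing of the norm forces $u_h\equiv 0$ when $k>0$). Then estimate \eqref{e3.23} analogously yields $\|\sig_h\|_{L^2(\Ome)}=0$, so $\sig_h=0$. Thus the kernel of the linear map associated with $A_h$ is trivial, which by linear algebra on the finite-dimensional space $V_h\times\Sig_h$ implies that the map is bijective, proving existence and uniqueness for every $(f,g)$ and for all $k,h,\delta,\beta>0$.

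There is essentially no obstacle here: the entire content has already been packed into the generalized inf-sup condition (Proposition \ref{prop3.1}) and the stability estimates of Theorem \ref{ldg1_sta}. The only subtlety worth remarking is that the DG-norm bound provides control only on $u_h$, not on $\sig_h$, which is why a separate argument through \eqref{e3.23} is needed to eliminate $\sig_h$; but both bounds are already in hand and degenerate cleanly when $M(f,g)=0$. The proof should therefore be short — a one-paragraph application of Theorem \ref{ldg1_sta} together with the standard finite-dimensional ``injectivity implies bijectivity'' argument.
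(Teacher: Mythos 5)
Your proposal is correct and follows essentially the same route as the paper: the paper likewise reduces existence to uniqueness via the equivalence of \eqref{e3.1} to a finite-dimensional square linear system, and then obtains uniqueness from the stability estimates of Theorem \ref{ldg1_sta} with zero sources. Your added remark that \eqref{e3.23} is needed separately to eliminate $\sig_h$ (since $\|\cdot\|_{DG}$ controls only $u_h$) is a fair and accurate elaboration of the same argument.
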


\begin{proof}
Since problem \eqref{e3.1} is equivalent to a linear system, hence,
it suffices to show the uniqueness. But the uniqueness follows
immediately from the stability estimates as the zero sources imply
that any solution must be a trivial solution.
\end{proof}

\subsection{Absolute stability of LDG method \#2}\label{sec-3.2}

In this subsection, we consider the LDG method \#2. By adding
\eqref{sec2:LDGa} and \eqref{sec2:LDGb} with the given numerical fluxes,
we then recast the LDG method \#2 as the following nonconforming mixed method:
Find $(u_h,\sig_h)\in V_h\times \Sig_h$ such that
\begin{equation}\label{e3.2.1}
B_h(u_h,\sig_h;v_h,\ta_h) =F(v_h,\ta_h) \qquad \forall (v_h,\ta_h)\in
V_h\times\Sig_h,
\end{equation}
where $F$ is defined in \eqref{e3.3} and
\begin{align}\label{e3.2.2}
&B_h(w_h,\ch_h;v_h,\ta_h)
=(\ch_h,\nabla_h v_h)_\Ome - k^2(w_h,v_h)_\Ome + \i k\Langle w_h,v_h\Rangle_\Ga \\
&\hskip 1.4in
-\sum_{e\in \cE_h^I}\Langle \{\ch_h\}-\i \beta [[w_h]],[[v_h]] \Rangle_e
\nonumber \\
&\hskip 1.4in
-\sum_{e\in \cE_h^I} \Bigl( \i\delta\Langle [[\ch_h]],[[\ta_h]]\Rangle_e
-\Langle [[w_h]],\{\ta_h\} \Rangle_e \Bigr) \nonumber \\
&\hskip 1.4in
+(\ch_h,\ta_h)_\Ome -(\nabla_h w_h,\ta_h)_\Ome. \nonumber
\end{align}

\subsubsection{\bf A generalized {\em inf-sup} condition}\label{sec-3.2.1}
The goal of this subsection is to show that the
sesquilinear form $B_h$ defined in \eqref{e3.2.2} for the LDG method \#2
satisfies another generalized {\em inf-sup} condition. To the end, we introduce
the following space notation:
\begin{align}\label{spaceSh}
\mathcal{S}_h:=\{(w_h,\ch_h)\in V_h\times\Sig_h; (w_h,\ch_h) \,
\text{satisfies \eqref{sec3.2:lem3}} \},
\end{align}
where
\begin{align}\label{sec3.2:lem3}
&(\ch_h,\ta_h)_{\Ome}-(\nab_h w_h,\ta_h)_{\Ome} \\
&\hskip 0.1in
-\sum_{e\in \cE_h^I} \Bigl( \i\delta\Langle[[\ch_h]],[[\ta_h]]\Rangle_{e}
-\Langle [[w_h]],\{\ta_h\}\Rangle _{e}\Bigr)=0
\quad\forall (v_h,\ta_h)\in V_h\times\Sig_h. \nonumber
\end{align}
We note that it is easy to check that $(w_h,\ch_h)\in\mathcal{S}_h$
implies that it satisfies \eqref{sec2:LDGa} with $\hat{u}_K$ being defined
by the LDG method \#2.

\begin{lemma}
For any $(w_h,\ch_h)\in\mathcal{S}_h$, there holds the following estimates:
\begin{align} \label{sec3.2:lem1}
&|w_h|_{1,h} \leq \sqrt{\frac{17}{16}}\|\ch_h\|_{L^2(\Ome)} \\
&\hskip 0.6in
+ C\Bigl(\sum_{e \in \cE_h^I} \Bigl( \frac{1}{h_e} \|[[ w_h]]\|_{L^2(e)}^2
+ \frac{\delta^2}{h_e} \|[[ \ch_h]]\|_{L^2(e)}^2 \Bigr) \Bigr)^{\frac12},
\notag \\
&\|\ch_h-\nab_h w_h\|_{L^2(\Ome)} \leq
C\Bigl(\sum_{e \in \cE_h^I} \Bigl( \frac{1}{h_e} \|[[ w_h]]\|_{L^2(e)}^2
+ \frac{\delta^2}{h_e} \|[[ \ch_h]]\|_{L^2(e)}^2 \Bigr) \Bigr)^{\frac12}.
\label{sec3.2:lem2}
\end{align}
\end{lemma}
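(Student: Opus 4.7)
The plan is to exploit the relation \eqref{sec3.2:lem3} by testing against two carefully chosen functions in $\Sig_h$. The crucial structural observation is that in the linear-element case ($r=\ell=1$) the piecewise gradient $\nab_h w_h$ is piecewise constant and therefore belongs to $\Sig_h$, so both $\nab_h w_h$ and $\ch_h-\nab_h w_h$ are admissible test functions.

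I would establish \eqref{sec3.2:lem2} first. Setting $\ta_h=\ch_h-\nab_h w_h$ in \eqref{sec3.2:lem3} puts $\|\ch_h-\nab_h w_h\|_{L^2(\Ome)}^2$ on the left. The key maneuver on the right is to write $[[\nab_h w_h]]=[[\nab_h w_h-\ch_h]]+[[\ch_h]]$; because $\i\delta\|[[\ch_h]]\|_{L^2(e)}^2$ is purely imaginary, the self-coupling drops out upon taking real parts, leaving
\[
-\re\sum_{e\in\cE_h^I}\i\delta\Langle[[\ch_h]],[[\nab_h w_h-\ch_h]]\Rangle_e.
\]
An $\ell^2$ Cauchy--Schwarz over edges combined with the discrete trace inequality $\|p\|_{L^2(e)}\lesssim h_e^{-1/2}\|p\|_{L^2(K_e\cup K_e')}$, applied to the polynomial $p=\nab_h w_h-\ch_h\in\Sig_h$, bounds this piece by $C\bigl(\sum_e\delta^2 h_e^{-1}\|[[\ch_h]]\|_{L^2(e)}^2\bigr)^{1/2}\|\ch_h-\nab_h w_h\|_{L^2(\Ome)}$. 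The remaining term $\sum_e\Langle[[w_h]],\{\ch_h-\nab_h w_h\}\Rangle_e$ is handled identically and contributes $C\bigl(\sum_e h_e^{-1}\|[[w_h]]\|_{L^2(e)}^2\bigr)^{1/2}\|\ch_h-\nab_h w_h\|_{L^2(\Ome)}$. Dividing through by $\|\ch_h-\nab_h w_h\|_{L^2(\Ome)}$ then yields \eqref{sec3.2:lem2}.

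For \eqref{sec3.2:lem1}, I would set $\ta_h=\nab_h w_h$ in \eqref{sec3.2:lem3} and take real parts to obtain
\[
|w_h|_{1,h}^2=\re(\ch_h,\nab_h w_h)_\Ome-\re\sum_e\i\delta\Langle[[\ch_h]],[[\nab_h w_h]]\Rangle_e+\re\sum_e\Langle[[w_h]],\{\nab_h w_h\}\Rangle_e.
\]
The first term is at most $\|\ch_h\|_{L^2(\Ome)}|w_h|_{1,h}$. The two edge sums, after $\ell^2$ Cauchy--Schwarz and the trace inequality replacing $\|[[\nab_h w_h]]\|_{L^2(e)}$ and $\|\{\nab_h w_h\}\|_{L^2(e)}$ by $h_e^{-1/2}\|\nab_h w_h\|_{L^2(K_e\cup K_e')}$, are each bounded by $CE\,|w_h|_{1,h}$, where $E$ denotes the square root of the jump expression in \eqref{sec3.2:lem1}. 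Dividing by $|w_h|_{1,h}$ gives $|w_h|_{1,h}\leq\|\ch_h\|_{L^2(\Ome)}+CE$, which is stronger than the stated bound (the looser constant $\sqrt{17/16}$ is cosmetic and can be obtained by a Young's inequality redistribution if desired).

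The only delicate point is the second estimate: a naive Cauchy--Schwarz applied directly to $\i\delta\Langle[[\ch_h]],[[\nab_h w_h]]\Rangle_e$ would leave $|w_h|_{1,h}$ on the right and thereby force an unwanted $\|\ch_h\|_{L^2(\Ome)}$ contribution that could not then be absorbed. The decomposition $[[\nab_h w_h]]=[[\nab_h w_h-\ch_h]]+[[\ch_h]]$, combined with the observation that $\i\delta\|[[\ch_h]]\|_{L^2(e)}^2$ is purely imaginary, is the trick that cleanly produces a right-hand side depending only on $\|\ch_h-\nab_h w_h\|_{L^2(\Ome)}$ and the jumps.
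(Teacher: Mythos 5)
Your proof is correct and takes essentially the same route as the paper: both estimates are obtained by testing \eqref{sec3.2:lem3} with $\ta_h=\nab_h w_h$ and $\ta_h=\ch_h-\nab_h w_h$ (admissible since $r=\ell=1$) and applying Cauchy--Schwarz and the trace inequality edge by edge. The only differences are cosmetic: your division argument yields the first bound with constant $1$ where the paper's Young-inequality absorption gives $\sqrt{17/16}$, and your decomposition $[[\nab_h w_h]]=[[\nab_h w_h-\ch_h]]+[[\ch_h]]$ is a redundant detour, since setting $\ta_h=\ch_h-\nab_h w_h$ already makes the $\delta$-term pair $[[\ch_h]]$ with $[[\ch_h-\nab_h w_h]]$ directly.
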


\begin{proof}
On noting that $(w_h,\ch_h)$ satisfies \eqref{sec3.2:lem3}, setting
$\ta_h=\nab_h w_h$ in \eqref{sec3.2:lem3}, we get
\begin{align*}
&|w_h|_{1,h}^2 =\re(\ch_h,\nab_h w_h)_{\Ome}\\
& \hskip0.8in
-\re\sum_{e\in \cE_h^I} \Bigl( \i\delta\Langle[[\ch_h]],[[\nab_h w_h]]\Rangle_{e}
-\Langle [[w_h]],\{\nab_h w_h\}\Rangle _{e}\Bigr)\\
& \hskip0.4in
\leq \frac12 |w_h|_{1,h}^2+ \frac12\|\ch_h\|^2_{L^2(\Ome)} \\
& \hskip0.8in
+\sum_{e\in \cE_h^I}\delta h_e^{-\frac12}\sum_{K=K_e,K_e^{\prime }}
\| [[\ch_h]] \|_{L^2(e)} \| \nab_hw_h \|_{L^2(K)} \\
& \hskip0.8in
+\sum_{e\in \cE_h^I}h_e^{-\frac12}\sum_{K=K_e,K_e^{\prime }}
\| [[w_h]] \|_{L^2(e)} \| \nab_hw_h \|_{L^2(K)} \\
& \hskip0.4in
\leq \frac12 |w_h|_{1,h}^2+ \frac12\|\ch_h\|^2_{L^2(\Ome)}
+ \frac{1}{34} |w_h|_{1,h}^2 \\
& \hskip0.8in
+ C\sum_{e \in \cE_h^I} \Bigl( \frac{1}{h_e} \|[[ w_h]]\|_{L^2(e)}^2
+ \frac{\delta^2}{h_e} \|[[ \ch_h]]\|_{L^2(e)}^2 \Bigr).
\end{align*}
Therefore,
\begin{align*}
|w_h|_{1,h}^2 \leq \frac{17}{16}\|\ch_h\|^2_{L^2(\Ome)}
+ C\sum_{e \in \cE_h^I} \Bigl( \frac{1}{h_e} \|[[ w_h]]\|_{L^2(e)}^2
+ \frac{\delta^2}{h_e} \|[[ \ch_h]]\|_{L^2(e)}^2 \Bigr),
\end{align*}
which gives \eqref{sec3.2:lem1}.

The estimate \eqref{sec3.2:lem2} follows from the same derivation by setting
$\ta_h=\ch_h - \nab_h w_h$ in \eqref{sec3.2:lem3}. The proof is complete.
\end{proof}

We now are ready to state a generalized {\em inf-sup} condition
for the sesquilinear form $B_h$.

\begin{proposition}\label{prop3.2}
There exits constant $c_2>0$ such that for any $(w_h,\ch_h)\in \mathcal{S}_h$
there holds
\begin{align}\label{e3.2.4a}
&\sup_{(v_h,\ta_h)\in V_h\times\Sig_h}
\frac{\re B_h(w_h,\ch_h;v_h,\ta_h)}{\norme{(v_h,\ta_h)}_{DG} }
\\
&\hskip 0.7in
+\sup_{(v_h,\ta_h)\in V_h\times\Sig_h}
\frac{\im B_h(w_h,\ch_h;v_h,\ta_h)}{\norme{(v_h,\ta_h)}_{DG} }
\geq \frac{c_2}{\gamma_2} \norme{(w_h,\ch_h)}_{DG},  \nonumber
\end{align}
where
\begin{align}\label{e3.2.4b}
&\gamma_2:=k+\max_{e\in \cE_h^I}
\Bigl( \frac{k^2+1}{\beta h_e} + \frac{\beta+\delta}{h_e}
+\frac{\delta}{h_e^3} +\frac{1}{\beta h_e^3} \Bigr), \\
&\label{e3.2.4c}
\norme{(w_h,\ch_h)}_{DG} :=\Bigl( k^2\|w_h\|_{L^2(\Ome)}^2
+k^2 \|w_h\|_{L^2(\Ga)}^2 \\
& \hskip 1.9in
+\|\ch_h\|^2_{L^2(\Ome)}
+c_{\Ome}\|\nabla_h w_h\|_{L^2(\Ga)}^2\Bigr)^{\frac12}. \notag
\end{align}
\end{proposition}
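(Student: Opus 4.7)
The plan is to adapt the four-step argument of Proposition~\ref{prop3.1}, with modifications dictated by (a) the appearance of $[[\ch_h]]$ (rather than $[[\nab_h w_h]]$) in the jump penalties of $B_h$ and (b) the presence of $\|\ch_h\|_{L^2(\Ome)}^2$ in the norm \eqref{e3.2.4c}. The restriction $(w_h,\ch_h)\in\mathcal{S}_h$ is essential: the preceding lemma lets us interchange $\nab_h w_h$ and $\ch_h$ up to face jumps, via \eqref{sec3.2:lem1} and \eqref{sec3.2:lem2}. Overall, the argument is a constrained variant of that of Proposition~\ref{prop3.1}, with $\ch_h$ taking over the role previously played by $\nab_h w_h$ in the second test slot.

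First I would choose the test pair $(v_h,\ta_h)=(w_h,-\ch_h)$. A direct computation, using the fact that $\Langle[[w_h]],\{\ch_h\}\Rangle_e$ and $\Langle\{\ch_h\},[[w_h]]\Rangle_e$ are complex conjugates, gives
\begin{align*}
\im B_h(w_h,\ch_h;w_h,-\ch_h) &= k\|w_h\|_{L^2(\Ga)}^2 + \sum_{e\in \cE_h^I}\Bigl(\beta\|[[w_h]]\|_{L^2(e)}^2 + \delta\|[[\ch_h]]\|_{L^2(e)}^2\Bigr), \\
\re B_h(w_h,\ch_h;w_h,-\ch_h) &= 2\re(\ch_h,\nab_h w_h)_\Ome - k^2\|w_h\|_{L^2(\Ome)}^2 - \|\ch_h\|_{L^2(\Ome)}^2 \\
&\qquad - 2\re\sum_{e\in\cE_h^I}\Langle\{\ch_h\},[[w_h]]\Rangle_e.
\end{align*}
The polarization identity $2\re(\ch_h,\nab_h w_h)_\Ome = \|\ch_h\|_{L^2(\Ome)}^2 + |w_h|_{1,h}^2 - \|\ch_h-\nab_h w_h\|_{L^2(\Ome)}^2$ together with \eqref{sec3.2:lem2} then reduces the real part to $|w_h|_{1,h}^2 - k^2\|w_h\|_{L^2(\Ome)}^2$ up to jump contributions of size $\sum_e(h_e^{-1}\|[[w_h]]\|_{L^2(e)}^2+\delta^2 h_e^{-1}\|[[\ch_h]]\|_{L^2(e)}^2)$, which are already controlled by the imaginary part.

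Second, I would use the Rellich-type pair $(v_h,\ta_h)=(\al\cdot\nab_h w_h,-\ch_h)$ with $\al=x-x_\Ome$. Because $r=\ell=1$, one still has $\nab_h v_h=\nab_h w_h$ on each element, so Step~3 of Proposition~\ref{prop3.1} transfers almost verbatim: combining \eqref{e3.6extra}, the local Rellich identity \eqref{e3.7}, and the strict star-shaped condition converts the $k^2\|w_h\|_{L^2(\Ome)}^2$ contribution into boundary traces, yielding an upper bound for $k^2\|w_h\|_{L^2(\Ome)}^2 + k^2\|w_h\|_{L^2(\Ga)}^2 + c_\Ome\|\nab_h w_h\|_{L^2(\Ga)}^2$ in terms of $\re B_h$ evaluated at the two chosen test pairs, a residual $|w_h|_{1,h}^2$, and an imaginary-part contribution scaled by a $\cM_2$-type constant. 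Tangential-jump terms $\|[\nab_h w_h\cdot\ta_e^j]\|_{L^2(e)}^2$ are absorbed via the inverse inequality \eqref{e3.17extra}.

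The main obstacle is closing the loop: the residual $|w_h|_{1,h}^2$ on the right-hand side is not present on the left, since the norm \eqref{e3.2.4c} tracks $\|\ch_h\|_{L^2(\Ome)}^2$ rather than $|w_h|_{1,h}^2$. For this I would invoke \eqref{sec3.2:lem1}, which bounds $|w_h|_{1,h}^2$ by $\|\ch_h\|_{L^2(\Ome)}^2$ plus the same type of jump contributions handled above; the first summand is swallowed by the $\|\ch_h\|_{L^2(\Ome)}^2$ already produced in Step~1, and the jump terms are absorbed using the imaginary-part bound from Step~1. This accounting is precisely what produces the new scalings $(\beta+\delta)/h_e$ and $\delta/h_e^3$ in the definition \eqref{e3.2.4b} of $\gamma_2$ that are absent from $\gamma_1$. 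Finally, bounding $\norme{(v_h,\ta_h)}_{DG}\lesssim (1+k)\norme{(w_h,\ch_h)}_{DG}$ for each test pair (as in Step~4 of Proposition~\ref{prop3.1}, using the trace inequality and the boundedness of $\al$) turns the resulting quantitative inequality into the desired generalized {\em inf-sup} bound \eqref{e3.2.4a}.
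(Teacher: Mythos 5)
Your architecture coincides with the paper's own proof: two test pairs (an ``energy'' pair and the Rellich pair with $v_h=\al\cdot\nab_h w_h$), the identities \eqref{e3.6extra}, \eqref{e3.6c} and \eqref{e3.7} together with strict star-shapedness, the lemma estimates \eqref{sec3.2:lem1}--\eqref{sec3.2:lem2} to trade $\nab_h w_h$ for $\ch_h$ up to face jumps, absorption of all jumps and boundary traces by the imaginary part, and the inverse inequality \eqref{e3.17extra} for tangential-derivative jumps (note the normal-derivative jumps are \emph{not} controlled by $\im B_h$ here, so the cancellation of the normal component in the combination corresponding to \eqref{e3.2.14} is essential; your tangential decomposition does achieve this). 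Your one genuine departure is the sign in the second test slot: you use $(w_h,-\ch_h)$ and $(\al\cdot\nab_h w_h,-\ch_h)$ where the paper uses $(w_h,\ch_h)$ and $(\al\cdot\nab_h w_h,\ch_h)$. This is a tidy variation: a direct computation confirms your identity $\im B_h(w_h,\ch_h;w_h,-\ch_h)=k\|w_h\|_{L^2(\Ga)}^2+\sum_{e\in\cE_h^I}\bigl(\beta\|[[w_h]]\|_{L^2(e)}^2+\delta\|[[\ch_h]]\|_{L^2(e)}^2\bigr)$ with the good sign on $\delta$ \emph{without} invoking the $\mathcal{S}_h$ constraint at that point (the cross terms $(\ch_h,\nab_h w_h)_\Ome+(\nab_h w_h,\ch_h)_\Ome$ are real), whereas the paper's choice produces $-\delta\|[[\ch_h]]\|^2$ and must use \eqref{sec3.2:lem3} with $\ta_h=\ch_h$ to flip it; the price is the polarization terms $\|\ch_h-\nab_h w_h\|^2$ in the real parts, which \eqref{sec3.2:lem2} controls and which generate the same scalings that appear in $\gamma_2$.

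Two points in your write-up need repair. First, your closing bound $\norme{(v_h,\ta_h)}_{DG}\lesssim(1+k)\norme{(w_h,\ch_h)}_{DG}$ is false as stated: unlike $\|\cdot\|_{DG}$ in Proposition \ref{prop3.1}, the norm \eqref{e3.2.4c} does not contain $|w_h|_{1,h}$, so $k\|\al\cdot\nab_h w_h\|_{L^2(\Ome)}$ can only be controlled through the inverse inequality $\|\nab_h w_h\|_{L^2(\Ome)}\lesssim \bigl(\max_K h_K^{-1}\bigr)\|w_h\|_{L^2(\Ome)}$; a sawtooth $w_h$ with small $L^2$-norm, small boundary traces and small $\ch_h$ is admissible in $\mathcal{S}_h$ (since \eqref{sec3.2:lem2} ties $\ch_h$ to $\nab_h w_h$ only modulo jumps) and makes the ratio of order $h^{-1}$, not $1+k$. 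The correct factor is $C\bigl(\max_K h_K^{-1}+k+1\bigr)$, as in the paper's Step 4; the slip is harmless for the conclusion because $\beta/h_e+1/(\beta h_e^3)\geq 2/h_e^2$ gives $\gamma_2\gtrsim \max_e h_e^{-2}$, so the factor is still dominated by $\cM_3\simeq\gamma_2$ and the constant $c_2/\gamma_2$ survives, but your step as written does not. Second, your loop-closing via \eqref{sec3.2:lem1} points the wrong way for your sign choice: with $(w_h,-\ch_h)$, Step 1's real part produces $|w_h|_{1,h}^2-k^2\|w_h\|_{L^2(\Ome)}^2$ (after polarization), not $\|\ch_h\|_{L^2(\Ome)}^2$, so inserting $\|\ch_h\|^2$ into the \emph{lower} bound requires $\|\ch_h\|^2\lesssim |w_h|_{1,h}^2+\text{jumps}$, i.e.\ the triangle inequality applied to \eqref{sec3.2:lem2}; the estimate \eqref{sec3.2:lem1} bounds $|w_h|_{1,h}$ \emph{by} $\|\ch_h\|$ and is what the paper needs because its artificially added $+d|w_h|_{1,h}^2$ sits on the upper-bound side. (You do still need \eqref{sec3.2:lem1} in your scheme, namely for the term $2\beta\im\sum_e\langle[[w_h]],[[\al\cdot\nab_h w_h]]\rangle_e$, where $\|\nab_h w_h\|_{L^2(K)}$ must be converted to $\|\ch_h\|$ plus jumps as in \eqref{e3.2.14c}.) With these two corrections your argument goes through and is essentially a sign-variant of the paper's proof.
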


\begin{proof}
Since the proof follows the same lines as that of Proposition \ref{prop3.1},
we shall only highlight the main steps and point out the differences.

\medskip
{\em Step 1: Taking the first test function.}

We first choose the test function $(v_h,\ta_h)=(w_h,\ch_h)$ to get
\begin{align*}
B_h(w_h,\ch_h;v_h,\ta_h)&=B_h(w_h,\ch_h;w_h,\ch_h) \\
&=(\ch_h,\ch_h)_\Ome -k^2(w_h,w_h)_\Ome
+\i k\Langle w_h,w_h \Rangle_\Ga  \\
&\qquad
-\sum_{e\in \cE_h^I}\Langle \{\ch_h\}-\i\beta[[w_h]],[[w_h]] \Rangle_e  \\
&\qquad
-\sum_{e\in \cE_h^I} \Bigl( \i\delta\Langle [[\ch_h]],[[\ch_h]]\Rangle_e
-\Langle [[ w_h]],\{\ch_h\} \Rangle_e \Bigr) \\
& \qquad
+(\ch_h,\nab_h w_h)_{\Ome} - (\nab_h w_h,\ch_h)_{\Ome}.
\end{align*}
Taking the real and imaginary parts yields
\begin{align}\label{e3.2.5a}
\re B_h(w_h,\ch_h;w_h,\ch_h)
&=\|\ch_h\|^2_{L^2(\Ome)} - k^2\|w_h\|_{L^2(\Ome)}^2,\\
\im B_h(w_h,\ch_h;w_h,\ch_h)
&=k\|w_h\|_{L^2(\Ga)}^2 \label{e3.2.5b1}\\
&\quad
+\sum_{e\in \cE_h^I} \Bigl( -\delta \|[[\ch_h]]\|_{L^2(e)}^2
+\beta \|[[w_h]]\|_{L^2(e)}^2 \Bigr) \nonumber \\
&\quad
+ 2\im (\ch_h,\nab_h w_h)_{\Ome}
+\sum_{e\in \cE_h^I} 2\im\Langle [[ w_h]],\{\ch_h\} \Rangle_e. \nonumber
\end{align}

On noting that $(w_h,\ch_h)$ satisfies \eqref{sec3.2:lem3}, setting
$\ta_h=\ch_h$ in \eqref{sec3.2:lem3}, we get
\begin{align*}
&\|\ch_h\|_{L^2(\Ome)}^2-(\nab_h w_h,\ch_h)_{\Ome} =
\sum_{e\in \cE_h^I} \Bigl( \i\delta\|[[\ch_h]]\|_{L^2(e)}^2
-\Langle [[w_h]],\{\ch_h\}\Rangle _{e}\Bigr),
\end{align*}
Taking the imaginary part yields
\begin{align*}
&\im(\ch_h, \nab_h w_h)_{\Ome}
+ \sum_{e\in \cE_h^I} \im\Langle [[w_h]],\{\ch_h\}\Rangle _{e} =
\sum_{e\in \cE_h^I}  \delta \|[[\ch_h]]\|_{L^2(e)}^2.
\end{align*}
Hence, \eqref{e3.2.5b1} becomes
\begin{align}
\im B_h(w_h,\ch_h;w_h,\ch_h)
&=k\|w_h\|_{L^2(\Ga)}^2 \label{e3.2.5b}\\
&\quad
+\sum_{e\in \cE_h^I} \Bigl( \delta \|[[\ch_h]]\|_{L^2(e)}^2
+\beta \|[[w_h]]\|_{L^2(e)}^2 \Bigr). \nonumber
\end{align}

\medskip
{\em Step 2: Taking the second test function.}

Next, we choose another test function
$(v_h,\ta_h)=(\al\cdot \nabla_h w_h,\ch_h)$, which is different
from the one used in the proof of Proposition \ref{prop3.1},
and use the fact that $\nabla_h v_h=\nabla_h w_h$ to get
\begin{align*}
B_h(w_h,\ch_h;v_h,\ta_h)&= B_h(w_h,\ch_h,\al\cdot \nabla_h w_h, \ch_h)
\\
&=(\ch_h,\ch_h)_\Ome -k^2(w_h,\al\cdot \nabla_h w_h)_\Ome
+\i k\Langle w_h,\al\cdot \nabla_h w_h\Rangle_\Ga\\
&\quad
+\sum_{e\in \cE_h^I} \i \beta \Langle [[w_h]],[[\al\cdot \nabla_h w_h]]\Rangle_e
- \sum_{e\in \cE_h^I}\Langle \{\ch_h\},[[\al\cdot \nabla_h w_h]] \Rangle_e \\
&\quad
-\sum_{e\in \cE_h^I} \i\delta \Langle [[\ch_h]], [[\ch_h]]\Rangle_e
+\sum_{e\in \cE_h^I} \Langle [[w_h]],\{\ch_h\} \Rangle_e \\
&\quad
+2\i \im (\ch_h,\nab_h w_h).
\end{align*}
Taking the real part immediately gives ($v_h=\al\cdot \nabla_h w_h$)
\begin{align}\label{e3.2.6a}
&\re B_h(w_h,\ch_h;v_h,\ch_h) \\
&\qquad
=\|\ch_h\|_{L^2(\Ome)}^2 - k^2 \re(w_h,v_h)_\Ome
-k\im \Langle w_h,v_h\Rangle_\Ga \notag \\
&\hskip 0.6in
+\sum_{e\in \cE_h^I} \Bigl( \re\langle \{\ch_h\},[[w_h]]-[[v_h]]\rangle_e
- \beta \im \Langle [[w_h]],[[v_h]]\Rangle_e \Bigr).\nonumber
\end{align}

\medskip
{\em Step 3: Deriving an upper bound for $k^2\|w_h\|_{L^2(\Ome)}^2$.}

To bound \eqref{e3.2.5a} from below, we again need to get an
upper bound for the term $k^2\|w_h\|_{L^2(\Ome)}^2$ on
the right hand side of \eqref{e3.2.5a}.

Using the integral identity \eqref{e3.6extra}, \eqref{e3.6c}, \eqref{e3.2.5a}
and \eqref{e3.2.6a}, we have
\begin{align}\label{e3.2.8}
& 2k^2\|w_h\|_{L^2(\Ome)}^2
=2\re B_h(w_h,\ch_h;v_h,\ch_h) \\
&\qquad\qquad
+(d-2)\re B_h(w_h,\ch_h;w_h,\ch_h)-d\|\ch_h\|_{L^2(\Ome)}^2  \notag\\
&\qquad\qquad
+2k\im \Langle w_h,v_h\Rangle_\Ga
+k^2\Langle \al\cdot \vec{n}_\Ome,|w_h|^2\Rangle_\Ga\nonumber \\
&\qquad\qquad
+2k^2 \re\sum_{e\in \cE_h^I}\Langle \al\cdot \vec{n}_e\{w_h\},[w_h]\Rangle_e
+2\re  \sum_{e\in \cE_h^I} \Langle \{\ch_h\}, [[v_h]] \Rangle_e  \notag \\
&\qquad\qquad
+2(d-1)\re \sum_{e\in \cE_h^I}\Langle \{\ch_h\}, [[w_h]] \Rangle_e
+2\im \sum_{e\in \cE_h^I}\beta \Langle [[w_h]],[[v_h]]\Rangle_e  \notag \\
&\qquad\qquad
+ d |w_h|_{1,h}^2
-\sum_{e\in \cE_h^B} \Langle \al \cdot \vec{n}_e,|\nabla_h w_h|^2\Rangle_e \notag \\
&\qquad\qquad
- 2\re \sum_{e\in \cE_h^I} \Langle \al\cdot \vec{n}_e \{\nabla_h w_h\}, [\nabla_h w_h]\Rangle_e\notag \\
&\qquad\qquad
\pm 2\re \sum_{e\in \cE_h^I} \Langle \al\cdot \vec{n}_e \{\ch_h\}, [\nabla_h w_h]\Rangle_e. \notag
\end{align}
We note that by \eqref{e3.7} the sum of the second and third lines to the
last is zero, and the contribution of the last line is obviously zero. These
terms are purposely added in order to get sharper upper bounds when they
are combined with the terms preceding them.

We now need to bound the terms on the right-hand side of \eqref{e3.2.8}.
Some of these have been obtained in the proof of the Proposition \ref{prop3.1},
and the others are derived as follows.

\begin{align}\label{e3.2.13}
2(d-1)\re\sum_{e\in \cE_h^I} &\Langle \{\ch_h\},[[w_h]]\Rangle_e \\
&\lesssim 2(d-1)\sum_{e\in \cE_h^I} h_e^{-\frac12}
\sum_{K=K_e,K_e^\prime} \|\ch_h\|_{L^2(K)} \|[w_h]\|_{L^2(e)} \notag\\
&\leq\frac{1}{16} \|\ch_h\|_{L^2(\Ome)}^2
+C\sum_{e\in \cE_h^I}\frac{1}{\beta h_e}\beta \|[w_h]\|_{L^2(e)}^2.  \notag
\end{align}
\begin{align} \label{e3.2.14}
&2\re\sum_{e\in \cE_h^I} \Langle \{\ch_h\},[[v_h]]\Rangle_e
- 2\re \sum_{e\in \cE_h^I} \Langle \al\cdot \vec{n}_e \{\ch_h\}, [\nabla_h w_h]\Rangle_e \\
& \hspace{0.8in}
= 2\re \sum_{e\in \cE_h^I} \left[ \sum_{j=1}^{d-1}
\int_e \Bigl((\al\cdot \ta_e^j) \{\ch_h\cdot \vec{n}_e\}\notag\right.  \\
& \hspace{1.8in}
\left. -(\al\cdot \vec{n}_e) \{\ch_h\cdot \ta_e^j\} \Bigr)
\nabla_h[w_h]\cdot\ta_e^j \right] \notag \\
& \hspace{0.8in}
\lesssim \sum_{e\in \cE_h^I}\sum_{j=1}^{d-1}h_e^{-\frac12}
\sum_{K=K_e,K_e^{\prime }} \|\ch_h\|_{L^2(K)}
\|[\nabla_h w_h\cdot \ta_e^j]\|_{L^2(e)} \notag \\
& \hspace{0.8in}
\leq \frac{1}{16} \|\ch_h\|_{L^2(\Ome)}^2 + C\sum_{e \in \cE_h^I}\frac{1}{\beta h_e}
\sum_{j=1}^{d-1}\beta \|[\nabla_h w_h\cdot \ta_e^j]\|_{L^2(e)}^2.  \notag
\end{align}
It follows from \eqref{sec3.2:lem1} and \eqref{sec3.2:lem2} that
\begin{align}
& \label{e3.2.14b}
2\re \sum_{e\in \cE_h^I}
\Langle \al\cdot \vec{n}_e \{\ch_h - \nabla_h w_h\}, [\nabla_h w_h]\Rangle_e \\
& \qquad \notag
\leq  \sum_{e\in \cE_h^I} h_e^{-1} \sum_{K=K_e,K_e^{\prime }}
 \|\ch_h-\nab_hw_h\|_{L^2(K)} \|\nabla_h w_h\|_{L^2(K)} \\
& \qquad \notag
\leq  C\max_{e\in \cE_h^I}h_e^{-2} \|\ch_h-\nab_hw_h\|_{L^2(\Ome)}^2
+ \frac{1}{17}\|\nabla_h w_h\|_{L^2(\Ome)} \\
& \qquad \notag
\leq
 C\sum_{e \in \cE_h^I} \Bigl( \frac{1}{h_e^3} \|[[ w_h]]\|_{L^2(e)}^2
+ \frac{\delta^2}{h_e^3} \|[[ \ch_h]]\|_{L^2(e)}^2 \Bigr) \\
& \qquad \quad \notag
+ \frac{1}{16}\|\ch_h\|^2_{L^2(\Ome)}
+ C\sum_{e \in \cE_h^I} \Bigl( \frac{1}{h_e} \|[[ w_h]]\|_{L^2(e)}^2
+ \frac{\delta^2}{h_e} \|[[ \ch_h]]\|_{L^2(e)}^2 \Bigr) \\
& \qquad \notag
\leq
\frac{1}{16}\|\ch_h\|^2_{L^2(\Ome)}
+ C\sum_{e \in \cE_h^I} \Bigl( \frac{1}{h_e^3} \|[[ w_h]]\|_{L^2(e)}^2
+ \frac{\delta^2}{h_e^3} \|[[ \ch_h]]\|_{L^2(e)}^2 \Bigr).
\end{align}
Similar to the derivation of \eqref{e3.12}, we have
\begin{align} \label{e3.2.14c}
&2\im\sum_{e\in \cE_h^I}\beta \Langle [[w_h]],[[v_h]]\Rangle_e
=2\im\sum_{e\in \cE_h^I}\beta \Langle [w_h],[\al\cdot \nab_hw_h]\Rangle_e\\
&\quad
 \leq C\sum_{e\in \cE_h^I}\beta h_e^{-\frac12} \sum_{K=K_e,K_e^{\prime }}
 \|[w_h]\|_{L^2(e)} \|\nabla_h w_h\|_{L^2(K)} \notag \\
&\quad
\leq C\sum_{e\in \cE_h^I} \beta^2h_e^{-1} \|[w_h]\|_{L^2(e)}^2
+  \frac{1}{17}\|\nabla_h w_h\|_{L^2(\Ome)}^2 \notag \\
&\quad
\leq \frac{1}{16}\|\ch_h\|^2_{L^2(\Ome)}
+ C\sum_{e \in \cE_h^I} \Bigl( \frac{\beta^2+1}{h_e} \|[[ w_h]]\|_{L^2(e)}^2
+ \frac{\delta^2}{h_e} \|[[ \ch_h]]\|_{L^2(e)}^2 \Bigr).  \notag
\end{align}

Now substituting estimates \eqref{e3.2.13}--\eqref{e3.2.14c},
together with \eqref{e3.9}, \eqref{e3.10} and \eqref{e3.11}, into
\eqref{e3.2.8} we obtain
\begin{align}\label{e3.2.15}
&2k^2\|w_h\|_{L^2(\Ome)}^2
\leq 2\re B_h(w_h,\ch_h;v_h,\ch_h) \\
&\qquad
+(d-2)\re B_h(w_h,\ch_h;w_h,\ch_h)
-d\|\ch_h\|^2_{L^2(\Ome)} \notag\\
&\qquad
+ Ck^2\|w_h\|_{L^2(\Ga)}^2
-\frac{c_{\Ome}}{2}\sum_{e\in \cE_h^B} \|\nabla_h w_h\|_{L^2(e)}^2
+\frac{k^2}{2} \|w_h\|_{L^2(\Ome)}^2 \notag\\
&\qquad
+C\sum_{e\in \cE_h^I} \frac{k^2}{\beta h_e} \beta \|[w_h]\|_{L^2(e)}^2
+\frac{1}{16} \|\ch_h\|^2_{L^2(\Ome)}
+C\sum_{e\in \cE_h^I}\frac{1}{\beta h_e} \beta \|[w_h]\|_{L^2(e)}^2  \notag \\
&\qquad
+\frac{17}{16}d\|\ch_h\|^2_{L^2(\Ome)}
+ Cd\sum_{e \in \cE_h^I} \Bigl( \frac{\beta^2+1}{h_e} \|[[ w_h]]\|_{L^2(e)}^2
+ \frac{\delta^2}{h_e} \|[[ \ch_h]]\|_{L^2(e)}^2 \Bigr) \notag \\
& \qquad
+ \frac{1}{16} \|\ch_h\|_{L^2(\Ome)}^2 + C\sum_{e \in \cE_h^I}\frac{1}{\beta h_e}
\sum_{j=1}^{d-1}\beta \|[\nabla_h w_h\cdot \ta_e^j]\|_{L^2(e)}^2 \notag \\
&\qquad
+\frac{1}{16}\|\ch_h\|^2_{L^2(\Ome)}
+ C\sum_{e \in \cE_h^I} \Bigl( \frac{1}{h_e^3} \|[[ w_h]]\|_{L^2(e)}^2
+ \frac{\delta^2}{h_e^3} \|[[ \ch_h]]\|_{L^2(e)}^2 \Bigr) \notag\\
&\qquad
+\frac{1}{16}\|\ch_h\|^2_{L^2(\Ome)}
+ C\sum_{e \in \cE_h^I} \Bigl( \frac{\beta^2+1}{h_e} \|[[ w_h]]\|_{L^2(e)}^2
+ \frac{\delta^2}{h_e} \|[[ \ch_h]]\|_{L^2(e)}^2 \Bigr).\notag
\end{align}

On noting that \eqref{e3.2.5b} provides upper bounds for terms
$\|[[\ch_h]]\|_{L^2(e)}^2$, $\|[w_h]\|_{L^2(e)}^2$ and $k^2 \|w_h\|_{L^2(\Ga)}^2$
in terms of $\im B_h(w_h,\ch_h;w_h,\ch_h)$,  and
the jumps of the tangential derivatives
$\|[\nabla_h w_h\cdot \ta_e^j]\|_{L^2(e)}^2$ in \eqref{e3.2.15}
can be bounded by the inverse inequality \eqref{e3.17extra}, we get
\begin{align}\label{e3.2.17}
&2k^2\|w_h\|_{L^2(\Ome)}^2 +\frac{k^2}{2}\|w_h\|_{L^2(\Ga)}^2
+\frac{c_{\Ome}}{2}\sum_{e\in \cE_h^B} \|\nabla_h w_h\|_{L^2(e)}^2 \\
&\quad \leq 2 \re B_h(w_h,\ch_h;v_h,\ch_h)
+(d-2)\re B_h(w_h,\ch_h;w_h,\ch_h) \notag \\
&\qquad
+\cM_3 \im B_h(w_h,\ch_h;w_h,\ch_h)
+\frac{k^2}{2}\|w_h\|_{L^2(\Ome)}^2
+\Bigr(\frac14+\frac{d}{16}\Bigl)\|\ch_h\|^2_{L^2(\Ome)} \notag \\
&\quad \leq 2 \re B_h(w_h,\ch_h;v_h,\ch_h)
+(d-1)\re B_h(w_h,\ch_h;w_h,\ch_h) \notag \\
&\qquad
+\cM_3 \im B_h(w_h,\ch_h;w_h,\ch_h)
+\frac{3k^2}{2}\|w_h\|_{L^2(\Ome)}^2
- \frac12\|\ch_h\|^2_{L^2(\Ome)}, \notag
\end{align}
where
\begin{align}\label{e3.2.18}
\cM_3=C\Bigl(k+\max_{e\in \cE_h^I}
\Bigl( \frac{k^2+1}{\beta h_e} + \frac{\beta+\delta}{h_e}
+\frac{\delta}{h_e^3} +\frac{1}{\beta h_e^3} \Bigr) \Bigr).
\end{align}

Using the linearity of the sesquilinear form $B_h$, we have
\begin{align*}
& k^2\|w_h\|_{L^2(\Ome)}^2
+k^2\|w_h\|_{L^2(\Ga)}^2
+\|\ch_h\|^2_{L^2(\Ome)}
+ c_{\Ome} \|\nabla_h w_h\|_{L^2(\Ga)}^2 \\
&\qquad
\leq 4 \re B_h(w_h,\ch_h;v_h,\ch_h) +2(d-1)\re B_h(w_h,\ch_h;w_h,\ch_h) \\
&\hskip0.8in
+2\cM_3 \im B_h(w_h,\ch_h;w_h,\ch_h) \\
& \qquad
= \re B_h(w_h,\ch_h;\tilde{w}_h,\ch_h)
+2\cM_3 \im B_h(w_h,\ch_h;w_h,\ch_h),
\end{align*}
where $\tilde{w}_h=4v_h + 2(d-1)w_h$ and $v_h=\al\cdot\nab_h w_h$.

\medskip
{\em Step 4: Finishing up.}

It follows from the inverse inequality \eqref{e3.17extra} that
\[
k^2\|v_h\|_{L^2(\Ome)}^2 + k^2\|v_h\|_{L^2(\Gamma)}^2
\leq Ck^2 \Bigl(\max_{K\in \cT_h} h_K^{-2} \|w_h\|_{L^2(\Ome)}^2
+ c_\Ome \|\nab_h w_h\|_{L^2(\Gamma)}^2 \Bigr).
\]
Then by the definition of the norm $\norme{(w_h,\ch_h)}_{DG}$ in
\eqref{e3.2.4c} and the fact that $\nabla_h v_h=\nabla_h w_h$, we get
\begin{align*}
\norme{(\tilde{w}_h,\ch_h)}_{DG}
&\leq 4\norme{(v_h,\ch_h)}_{DG} + 2(d-1)\norme{(w_h,\ch_h)}_{DG} \\
&\leq C\bigl(\max_{K\in \cT_h} h_K^{-1}+k+1\bigr) \norme{(w_h,\ch_h)}_{DG}.
\end{align*}
Therefore,
\begin{align}\label{e3.2.20}
&
\frac{\re B_h(w_h,\ch_h;\tilde{w}_h,\ch_h)}
{\norme{(\tilde{w}_h,\ch_h)}_{DG} }
+ \frac{\im B_h(w_h,\ch_h;w_h,\ch_h)}
{\norme{(w_h,\ch_h)}_{DG} }\\
& \quad
\geq \frac{\re B_h(w_h,\ch_h;\tilde{w}_h,\ch_h)}
{C\bigl(\max_{K\in \cT_h} h_K^{-1}+k+1 \bigr) \norme{(w_h,\ch_h)}_{DG} }
+ \frac{\im B_h(w_h,\ch_h;w_h,\ch_h)}
{\norme{(w_h,\ch_h)}_{DG} }\notag \\
&\quad
\geq \frac{1}{2\cM_3}\frac{\re B_h(w_h,\ch_h;\tilde{w}_h,\ch_h)
+\cM_3\im B_h(w_h,\ch_h;w_h,\ch_h)}
{\norme{(w_h,\ch_h)}_{DG} } \notag \\
& \quad
\geq \frac{c_2}{\gamma_2} \norme{(w_h,\ch_h)}_{DG} ,\notag
\end{align}
for some constant $c_2>0$ and $\gamma_2$ defined by \eqref{e3.2.4b}.
Hence, \eqref{e3.2.4a} holds and the proof is complete.
\end{proof}

\subsubsection{\bf Stability estimates} \label{sec-3.2.2}

The generalized {\em inf-sup} condition proved in the last subsection
immediately infers the following (absolute) stability and well-posedness
theorems for the LDG method \#2.

\begin{theorem}\label{ldg2_sta}
Let $(u_h, \sig_h)\in V^h\times\Sig_h$ solve \eqref{e3.2.1}. Then there holds
\begin{align}\label{e3.2.22}
\norme{(u_h,\sig_h)}_{DG}  \lesssim \gamma_2 k^{-1}\, M(f,g).
\end{align}
\end{theorem}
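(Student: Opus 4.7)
The plan is to mimic the proof of Theorem \ref{ldg1_sta} but with Proposition \ref{prop3.2} in place of Proposition \ref{prop3.1}. The subtle preliminary point is that the inf-sup condition \eqref{e3.2.4a} is only stated for pairs lying in the subspace $\mathcal{S}_h$, so before anything else I would verify that the discrete solution $(u_h,\sig_h)$ belongs to $\mathcal{S}_h$. This is immediate: taking $v_h=0$ in the defining equation \eqref{e3.2.1} and reading off the terms in \eqref{e3.2.2} that survive, one gets exactly the identity \eqref{sec3.2:lem3} (with $w_h=u_h$, $\ch_h=\sig_h$). Thus $(u_h,\sig_h)\in\mathcal{S}_h$ and Proposition \ref{prop3.2} applies.

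With this in hand, I would substitute $(w_h,\ch_h)=(u_h,\sig_h)$ in \eqref{e3.2.4a}, and then use the scheme equation $B_h(u_h,\sig_h;v_h,\ta_h)=F(v_h,\ta_h)$ to replace $B_h$ by $F$ in both the real and imaginary suprema. This reduces the task to showing
\[
\bigl|F(v_h,\ta_h)\bigr|\lesssim k^{-1}M(f,g)\,\norme{(v_h,\ta_h)}_{DG}
\]
for all $(v_h,\ta_h)\in V_h\times\Sig_h$.

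The bound on $F$ is a direct Cauchy--Schwarz computation identical to \eqref{e3.23a}: one writes
\[
|F(v_h,\ta_h)|\leq \|f\|_{L^2(\Ome)}\|v_h\|_{L^2(\Ome)}+\|g\|_{L^2(\Ga)}\|v_h\|_{L^2(\Ga)},
\]
factors out $k^{-1}$ from each $\|v_h\|$ term, and observes that the definition \eqref{e3.2.4c} of $\norme{\cdot}_{DG}$ controls $k\|v_h\|_{L^2(\Ome)}$ and $k\|v_h\|_{L^2(\Ga)}$. Combining this bound with the inf-sup estimate yields
\[
\frac{c_2}{\gamma_2}\norme{(u_h,\sig_h)}_{DG}\lesssim k^{-1}M(f,g),
\]
which is \eqref{e3.2.22}. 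There is no main obstacle beyond the membership check $(u_h,\sig_h)\in\mathcal{S}_h$; the rest is mechanical since Proposition \ref{prop3.2} has already absorbed all the hard analysis. Note also that, unlike the situation for LDG method \#1, no separate argument for $\|\sig_h\|_{L^2(\Ome)}$ is needed, because $\|\sig_h\|_{L^2(\Ome)}$ is already built into the DG norm $\norme{(\cdot,\cdot)}_{DG}$ in \eqref{e3.2.4c}.
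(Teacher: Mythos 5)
Your proposal is correct and is essentially the paper's own proof: the paper likewise observes that any solution of \eqref{e3.2.1} belongs to $\mathcal{S}_h$ (your $v_h=0$ check, which works because $F(0,\ta_h)=0$), then applies Proposition \ref{prop3.2} with $(w_h,\ch_h)=(u_h,\sig_h)$ together with the bound \eqref{e3.23a} on $F$, which carries over verbatim since $\norme{(v_h,\ta_h)}_{DG}$ controls $k\|v_h\|_{L^2(\Ome)}$ and $k\|v_h\|_{L^2(\Ga)}$. Your closing remark that no separate estimate for $\|\sig_h\|_{L^2(\Ome)}$ is needed is also accurate, as it is built into the norm \eqref{e3.2.4c}.
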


\begin{proof}
On noting that any solution $(u_h,\sig_h)$ of \eqref{e3.2.1} belongs
to the set $\mathcal{S}_h$, the desired estimate \eqref{ldg2_sta}
follows readily from \eqref{e3.2.4a} with $(w_h,\ch_h)=(u_h,\sig_h)$,
\eqref{e3.2.1} and \eqref{e3.23a}. The proof is complete.
\end{proof}

\begin{theorem}\label{ldg2_existence}
The LDG method \eqref{e3.2.1} has a unique solution
for all $k,\, h,\, \delta,\, \beta>0$.
\end{theorem}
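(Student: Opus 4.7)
The plan is to mirror the strategy used for Theorem \ref{ldg1_existence}: exploit the fact that \eqref{e3.2.1} is a finite-dimensional square linear system, reduce existence to uniqueness, and then use the stability estimate \eqref{e3.2.22} from Theorem \ref{ldg2_sta} to kill the homogeneous solution.

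First, I would observe that \eqref{e3.2.1} is equivalent to a square linear system in the finite-dimensional space $V_h\times\Sig_h$. Consequently, existence of a solution for every right-hand side is equivalent to uniqueness of the solution for zero right-hand side, so it suffices to prove that the only solution of \eqref{e3.2.1} with $f=0$ and $g=0$ is the trivial one.

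Next, I would take $(u_h,\sig_h)\in V_h\times\Sig_h$ satisfying $B_h(u_h,\sig_h;v_h,\ta_h)=0$ for all $(v_h,\ta_h)\in V_h\times\Sig_h$. Since $f=0$ and $g=0$, we have $M(f,g)=0$ by \eqref{Mfg}. Noting that any solution of \eqref{e3.2.1} automatically lies in $\mathcal{S}_h$ (because choosing test pairs of the form $(0,\ta_h)$ in \eqref{e3.2.1} recovers \eqref{sec3.2:lem3}), Theorem \ref{ldg2_sta} applies and yields
\begin{equation*}
\norme{(u_h,\sig_h)}_{DG}\lesssim \gamma_2 k^{-1}M(f,g)=0.
\end{equation*}

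Finally, I would read off from the definition \eqref{e3.2.4c} of $\norme{\cdot}_{DG}$ that $\norme{(u_h,\sig_h)}_{DG}=0$ forces both $k^2\|u_h\|_{L^2(\Ome)}^2=0$ and $\|\sig_h\|_{L^2(\Ome)}^2=0$; since $k>0$, this gives $u_h\equiv 0$ and $\sig_h\equiv \mathbf{0}$. Hence the homogeneous problem has only the trivial solution, so \eqref{e3.2.1} is uniquely solvable for every $k,h,\delta,\beta>0$. There is essentially no obstacle here: all the hard work resides in Proposition \ref{prop3.2} and Theorem \ref{ldg2_sta}, and this well-posedness statement is simply their immediate corollary, which is why the argument is a direct analogue of the proof of Theorem \ref{ldg1_existence}.
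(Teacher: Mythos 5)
Your proposal is correct and follows exactly the paper's argument: the paper proves this theorem by noting it is a verbatim copy of the proof of Theorem \ref{ldg1_existence}, i.e., equivalence to a square linear system reduces existence to uniqueness, and the stability estimate of Theorem \ref{ldg2_sta} (whose hypothesis $(u_h,\sig_h)\in\mathcal{S}_h$ you rightly verify) forces the homogeneous solution to vanish. Your added observation that the norm \eqref{e3.2.4c} controls $\|\sig_h\|_{L^2(\Ome)}$ directly, so both components vanish at once, is a correct and welcome detail.
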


Since the proof of the above theorem is a verbatim copy of
that of Theorem \ref{ldg1_existence}, we omit it.

\section{Error estimates} \label{sec-4}

The goal of this section is to derive error estimates for the LDG method
\#1 and \#2. Following the idea of \cite{fw08a}, this will be done in
two steps. First, we introduce an elliptic projection of the solution
$(u,\sig)$ using a corresponding coercive sesquilinear form of
$A_h$ (resp. $B_h$) and derive error bounds for the projection.
We note that the error analysis for the elliptic projections
has an independent interest in itself (cf. \cite{CCPS02}).
Second, we bound the error between the projection and the LDG solution
using the stability estimates obtained in Section \ref{sec-3}.
Since the error analysis for the two LDG methods are similar,
we shall give more details of the error analysis for the LDG method \#1
but shall be brief for the LDG method \#2. Throughout this section, we let
for $j=1,2$
\[
H^j(\cT_h)=\prod_{K\in\cT_h} H^j(K),\quad
h=\max_{K\in \cT_h} h_K\approx \max_{e\in \cE_h} h_e,\quad
\beta=\beta_0\,h^{-1},\quad \delta=\delta_0\,h
\]
for some positive constants $\beta_0$ and $\delta_0$.

\subsection{Error estimates for the LDG method \#1} \label{sec-4.1}

\subsubsection{\bf Elliptic projection and its error estimates} \label{sec-4.1.1}

For any $(w,\ch)\in H^2(\cT_h) \times H^1(\cT_h)^d$,
we define the elliptic projection
$(\tilde{w}_h,\tilde{\chi}_h)\in V_h\times \Sig_h$ of $(w,\ch)$ by
\begin{align}\label{e4.1}
a_h(\tilde{w}_h,\tilde{\chi}_h;v_h,\ta_h)=a_h(w,\ch;v_h,\ta_h)
\qquad\forall (v_h,\ta_h)\in  V_h\times \Sig_h,
\end{align}
where
\begin{align}\label{e4.2}
a_h(w_h,\ch_h; v_h,\ta_h) :&=A_h(w_h,\ch_h ;v_h,\ta_h) +k^2 (w_h,v_h)_\Ome \\
&=(\ch_h,\nabla_h v_h)_\Ome +\i k\Langle w_h,v_h\Rangle_\Ga \nonumber \\
&\qquad
-\sum_{e\in \cE_h^I}\Langle \{\nabla_h w_h\}-\i \beta_0 h^{-1} [[w_h]],[[v_h]] \Rangle_e
\nonumber \\
&\qquad
-\sum_{e\in \cE_h^I} \Bigl( \i\delta\Langle [[\nabla_h w_h]],[[\ta_h]]\Rangle_e
-\Langle [[w_h]],\{\ta_h\} \Rangle_e \Bigr) \nonumber \\
&\qquad
+ (\ch_h,\ta_h)_\Ome - (\nabla_h w_h, \ta_h)_\Ome. \nonumber
\end{align}

To derive error bounds for the above elliptic projection,
we first notice that
\begin{align*}
a_h(w_h,\ch_h; v_h, -\nab_h v_h)
&=(\nab_h w_h,\nabla_h v_h)_\Ome +\i k\Langle w_h,v_h\Rangle_\Ga  \\
&\qquad
-\sum_{e\in \cE_h^I}\Langle \{\nabla_h w_h\}-\i \beta_0 h^{-1} [[w_h]],[[v_h]]\Rangle_e
\nonumber \\
&\qquad
+\sum_{e\in \cE_h^I} \Bigl( \i\delta\Langle [[\nabla_h w_h]],[[\nab_h v_h]]\Rangle_e
-\Langle [[w_h]],\{\nab_h v_h\} \Rangle_e \Bigr) \nonumber\\
&=:\mathcal{A}_h(w_h,v_h). \nonumber
\end{align*}
As a result, $\tilde{w}_h\in V_h$ satisfies
\begin{align}\label{e4.2a}
\mathcal{A}_h(\tilde{w}_h,v_h) = \mathcal{A}_h(w,v_h)
\qquad\forall v_h\in V_h.
\end{align}

Moreover, since
\begin{align*}
a_h(w_h,\ch_h; 0, \ta_h)
&=(\ch_h,\ta_h)_\Ome - (\nabla_h w_h, \ta_h)_\Ome  \\
&\qquad
-\sum_{e\in \cE_h^I} \Bigl( \i\delta\Langle [[\nabla_h w_h]],[[\ta_h]]\Rangle_e
-\Langle [[w_h]],\{\ta_h\} \Rangle_e \Bigr),
\end{align*}
we have that $\tilde{\chi}_h\in \Sig_h$ satisfies
\begin{align}\label{e4.2b}
(\tilde{\ch}_h, \ta_h)_\Ome &= (\nab_h \tilde{w}_h, \ta_h)_\Ome \\
&\quad
+ \sum_{e\in \cE_h^I} \Bigl( \i\delta\Langle [[\nabla_h \tilde{w}_h - \nabla_h w]] ,
[[\ta_h]] \Rangle_e -\Langle [[\tilde{w}_h - w]], \{\ta_h\} \Rangle_e \Bigr)\nonumber \\
&\quad
+(\ch,\ta_h)_\Ome -(\nab_h w, \ta_h)_\Ome
\qquad\forall \ta_h\in \Sig_h. \nonumber
\end{align}

\begin{lemma}\label{lem4.1}
For any $w,v\in H^2(\cT_h)$, there exists a $k$- and $h$-independent
constant $C$ such that
\begin{align}\label{e4.3}
&|\mathcal{A}_h(w, v)| \leq C |||w||||_{1,h} |||v|||_{1,h}.
\end{align}
Moreover, for any $\epsilon\in (0,1)$, there exists a constant $c_\epsilon>0$
such that
\begin{align}\label{e4.4}
&\re \mathcal{A}_h(v_h, v_h) + (1-\epsilon+c_\epsilon)\im \mathcal{A}_h(v_h, v_h)
\geq (1-\epsilon) \|v_h\|_{1,h}^2,
\end{align}
where
\begin{align}\label{e4.3a}
\|w\|_{1,h} &:=  \Bigl( \|\nab_h w\|_{L^2(\Ome)}^2 +k \|w\|_{L^2(\Gamma)}^2 \\
&\qquad\quad
+ \sum_{e\in\cE_h^I} \Bigl(\beta \|[w]\|_{L^2(e)}^2
+\delta \|[[\nab_h w]]\|_{L^2(e)}^2 \Bigr) \Bigr)^\frac12, \nonumber \\
|||w|||_{1,h} &:=  \Bigl( \|w\|_{1,h}^2
+\sum_{e\in\cE_h^I} \beta^{-1}\|\{\nab_h w\cdot \mathbf{n}_e\}\|_{L^2(e)}^2 \Bigr)^\frac12.
\label{e4.3b}
\end{align}
\end{lemma}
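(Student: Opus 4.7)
The plan is to prove the two parts of the lemma separately, with continuity coming straightforwardly from Cauchy--Schwarz and the G\r{a}rding-type inequality requiring a careful absorption of the consistency term.

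For the continuity bound \eqref{e4.3}, I would bound each term of $\mathcal{A}_h$ using the Cauchy--Schwarz inequality on $L^2(\Ome)$, $L^2(\Ga)$, and each edge $e$. The bulk term $(\nab_h w, \nab_h v)_\Ome$ and the boundary term $\i k\langle w, v\rangle_\Ga$ are immediately controlled by $\|w\|_{1,h}\|v\|_{1,h}$ (using $k\|w\|_{L^2(\Ga)}^2 \leq \|w\|_{1,h}^2$). The pure jump penalty terms $\i\beta\langle[[w]],[[v]]\rangle_e$ and $\i\delta\langle[[\nab_h w]],[[\nab_h v]]\rangle_e$ split as $(\beta^{1/2}\|[[w]]\|_{L^2(e)})(\beta^{1/2}\|[[v]]\|_{L^2(e)})$ and the analogous pairing in $\delta$. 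The only terms forcing the augmented norm $|||\cdot|||_{1,h}$ are the two consistency terms, which I would pair as
\[
|\langle\{\nab_h w\}, [[v]]\rangle_e| \leq \beta^{-\frac12}\|\{\nab_h w\cdot\vec{n}_e\}\|_{L^2(e)} \cdot \beta^{\frac12}\|[[v]]\|_{L^2(e)},
\]
so that one factor goes into $|||w|||_{1,h}$ via the extra $\beta^{-1}\|\{\nab_h w\cdot \vec{n}_e\}\|_{L^2(e)}^2$ contribution and the other into $\|v\|_{1,h} \leq |||v|||_{1,h}$. A discrete Cauchy--Schwarz on the sum over $e$ closes this part.

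For the G\r{a}rding-type inequality \eqref{e4.4}, I would first separate real and imaginary parts of $\mathcal{A}_h(v_h, v_h)$:
\begin{align*}
\re \mathcal{A}_h(v_h, v_h) &= \|\nab_h v_h\|_{L^2(\Ome)}^2 - 2\re\sum_{e\in\cE_h^I}\langle\{\nab_h v_h\}, [[v_h]]\rangle_e, \\
\im \mathcal{A}_h(v_h, v_h) &= k\|v_h\|_{L^2(\Ga)}^2 + \sum_{e\in\cE_h^I}\bigl(\beta\|[[v_h]]\|_{L^2(e)}^2 + \delta\|[[\nab_h v_h]]\|_{L^2(e)}^2\bigr).
\end{align*}
Note that $\im \mathcal{A}_h(v_h, v_h)$ already supplies the $k\|v_h\|_{L^2(\Ga)}^2$, $\beta\|[[v_h]]\|^2$, and $\delta\|[[\nab_h v_h]]\|^2$ pieces of $\|v_h\|_{1,h}^2$, so the task reduces to controlling the sign-indefinite consistency term in $\re \mathcal{A}_h$.

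The crux is the following absorption. Using Young's inequality together with the discrete trace inequality $\|\{\nab_h v_h\}\|_{L^2(e)}^2 \lesssim h_e^{-1}\sum_{K\supset e}\|\nab v_h\|_{L^2(K)}^2$ (valid for piecewise linear $v_h$) and the scaling $\beta = \beta_0 h^{-1}$, for any $\epsilon\in(0,1)$ there exists $c_\epsilon>0$ such that
\[
2\Bigl|\re\sum_{e\in\cE_h^I}\langle\{\nab_h v_h\}, [[v_h]]\rangle_e\Bigr|
\leq \epsilon\,\|\nab_h v_h\|_{L^2(\Ome)}^2
+ c_\epsilon \sum_{e\in\cE_h^I}\beta\|[[v_h]]\|_{L^2(e)}^2.
\]
Combining this with the identities above yields
\[
\re \mathcal{A}_h(v_h, v_h) + (1-\epsilon+c_\epsilon)\im \mathcal{A}_h(v_h, v_h)
\geq (1-\epsilon)\|\nab_h v_h\|_{L^2(\Ome)}^2 + (1-\epsilon)\bigl(k\|v_h\|_{L^2(\Ga)}^2 + \sum_e \beta\|[[v_h]]\|^2 + \sum_e \delta\|[[\nab_h v_h]]\|^2\bigr),
\]
since the $c_\epsilon\,\im \mathcal{A}_h$ contribution cancels the $-c_\epsilon\beta\|[[v_h]]\|^2$ defect. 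This is exactly $(1-\epsilon)\|v_h\|_{1,h}^2$.

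The main bookkeeping obstacle will be keeping track of constants in the Young/trace step so that the final coercivity constant is genuinely $(1-\epsilon)$ and, in particular, independent of both $h$ and $k$. This is where the choice $\beta = \beta_0 h^{-1}$ and the linear-element hypothesis enter decisively; everything else reduces to carefully separating real and imaginary parts and summing standard estimates over $\cE_h^I$.
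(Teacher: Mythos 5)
Your proof is correct and is precisely the ``elementary'' argument the paper has in mind when it omits the proof: term-by-term Cauchy--Schwarz with the weights $\beta^{\pm 1/2}$ on the consistency terms (which is exactly what forces the augmented norm $|||\cdot|||_{1,h}$) gives \eqref{e4.3}, and your real/imaginary split of $\mathcal{A}_h(v_h,v_h)$ together with the weighted Young/trace absorption of $2\re\sum_{e\in\cE_h^I}\langle\{\nab_h v_h\},[[v_h]]\rangle_e$, with the $c_\epsilon\beta$-defect cancelled by the $c_\epsilon\im\mathcal{A}_h$ contribution, gives \eqref{e4.4}. The only bookkeeping point worth one line: the absorption yields $h_e^{-1}\|[[v_h]]\|_{L^2(e)}^2$, so dominating it by $c_\epsilon\,\beta\|[[v_h]]\|_{L^2(e)}^2$ with the global choice $\beta=\beta_0 h^{-1}$, $h=\max_K h_K$, implicitly uses $h/h_e\lesssim 1$ (quasi-uniformity), or else the per-edge scaling $\beta\simeq\beta_0 h_e^{-1}$ that the paper actually employs in its numerical section --- a caveat, not a gap.
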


Since the proof of the above lemma is elementary, we omit it.
We now recall the following stability estimate for $u$
(cf. \cite{Cummings_Feng06,fw08a}):
\[
\|u\|_{H^2(\Ome)}\lesssim (k^{-1} + k) M(f,g),
\]
which is needed to prove the next lemma and will be used several times
in the rest of this section.

\begin{proposition}\label{prop4.1}
Let $u\in H^2(\Ome)$ be the solution to problem
\eqref{helm-1}-\eqref{helm-2} and $\sig=\nab u$.
Let $(\tilde{u}_h,\tilde{\sig}_h)\in V_h\times \Sig_h$ denote
the elliptic projection of $(u,\sig)$ defined by \eqref{e4.1}.
Then there hold the following error estimates:
\begin{align}\label{e4.5a}
\|u-\tilde{u}_h\|_{1,h} +k^{\frac12} \|u-\tilde{u}_h\|_{L^2(\Gamma)}
&\lesssim (1+k h)^{\frac12} kh,\\
\|u-\tilde{u}_h\|_{L^2(\Ome)} &\lesssim (1+k h)kh^2,  \label{e4.5b}\\
\|\sig-\tilde{\sig}_h\|_{L^2(\Ome)} &\lesssim (1+k h)^{\frac12}kh.\label{e4.5c}
\end{align}
\end{proposition}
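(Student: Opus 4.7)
The plan is a two-step C\'ea-plus-duality analysis, exploiting the fact that the elliptic projection defined by \eqref{e4.1} decouples nicely: the test pair $(v_h,-\nab_h v_h)$ reduces the definition to the scalar equation \eqref{e4.2a} governed by the coercive IPDG-type form $\mathcal{A}_h$, while the choice $(0,\ta_h)$ yields the $L^2$ saddle-type relation \eqref{e4.2b} that determines $\tilde{\sig}_h$ entirely from $\tilde{u}_h$. Consequently, \eqref{e4.5a} will follow from a standard Galerkin analysis of \eqref{e4.2a} using Lemma \ref{lem4.1}; \eqref{e4.5b} from an Aubin--Nitsche duality argument against a genuinely elliptic (not Helmholtz) dual problem; and \eqref{e4.5c} from direct inspection of \eqref{e4.2b}.

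For \eqref{e4.5a}, let $I_h u\in V_h$ be the standard continuous piecewise-linear Lagrange interpolant of $u$ and write $u-\tilde{u}_h = \eta + \xi_h$ with $\eta = u - I_h u$ and $\xi_h = I_h u - \tilde{u}_h$. Galerkin orthogonality from \eqref{e4.2a} gives $\mathcal{A}_h(\xi_h,v_h) = -\mathcal{A}_h(\eta,v_h)$ for every $v_h\in V_h$; combining the G\r{a}rding-type estimate \eqref{e4.4} (with $\epsilon$ small) and continuity \eqref{e4.3} yields $\|\xi_h\|_{1,h}^2 \lesssim |||\eta|||_{1,h}\,|||\xi_h|||_{1,h}$. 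An inverse/trace inequality on piecewise polynomials, together with $\beta = \beta_0/h$, shows $|||\xi_h|||_{1,h}\lesssim \|\xi_h\|_{1,h}$ (the extra weighted average of $\nab_h\xi_h$ absorbs into the gradient norm), hence $\|\xi_h\|_{1,h} \lesssim |||\eta|||_{1,h}$. Since $[\eta]$ vanishes across interior edges by continuity of $I_h u$, the choices $\beta=\beta_0/h$, $\delta=\delta_0 h$ and standard interpolation bounds on $\eta$ give $|||\eta|||_{1,h}\lesssim h(1+kh)^{1/2}|u|_{H^2(\Ome)}$. Using the a priori bound $\|u\|_{H^2(\Ome)}\lesssim (k^{-1}+k)M(f,g)$ stated earlier and the triangle inequality then delivers \eqref{e4.5a}, since $k^{1/2}\|\cdot\|_{L^2(\Ga)}$ is already embedded in $\|\cdot\|_{1,h}$.

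For \eqref{e4.5b}, I would let $\psi$ solve the genuinely coercive dual problem $-\Del\psi = u - \tilde{u}_h$ in $\Ome$ with the conjugate impedance condition $\pa_{\vec{n}}\psi - \i k\psi = 0$ on $\Ga$, which corresponds to $\mathcal{A}_h$ rather than to the indefinite Helmholtz operator, so classical elliptic regularity applies with tractable $k$-dependence. Writing $\|u-\tilde{u}_h\|_{L^2(\Ome)}^2 = \mathcal{A}_h(u-\tilde{u}_h,\psi) = \mathcal{A}_h(u-\tilde{u}_h,\psi-I_h\psi)$ via the orthogonality just used, then invoking \eqref{e4.3}, the interpolation bound $|||\psi-I_h\psi|||_{1,h}\lesssim h\|\psi\|_{H^2(\Ome)}$, and the energy estimate \eqref{e4.5a}, gives \eqref{e4.5b} after routine algebra. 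For \eqref{e4.5c}, I split $\sig - \tilde{\sig}_h = \nab_h(u - \tilde{u}_h) + (\nab_h \tilde{u}_h - \tilde{\sig}_h)$; the first summand is controlled by \eqref{e4.5a}. Since $\nab_h \tilde{u}_h \in \Sig_h$ on linear elements, the choice $\ta_h = \nab_h \tilde{u}_h - \tilde{\sig}_h$ in \eqref{e4.2b} expresses $\|\nab_h \tilde{u}_h - \tilde{\sig}_h\|_{L^2(\Ome)}^2$ as a sum of interior-edge jumps in $[[\tilde{u}_h - u]]$ and $[[\nab_h(\tilde{u}_h - u)]]$, each controlled by $\|u - \tilde{u}_h\|_{1,h}$ via the $\beta$- and $\delta$-weighted edge terms in that norm.

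The main technical obstacle I anticipate is the wave-number bookkeeping in the duality step: one must track carefully how $k$ enters both the $H^2$-regularity constant for the dual solution $\psi$ and the continuity constant in \eqref{e4.3} so that the product combines to produce exactly the $(1+kh)kh^2$ factor claimed in \eqref{e4.5b}, rather than a looser combination such as $(1+kh)^{1/2}kh^2$. A secondary technicality is the rigorous verification of \eqref{e4.4} underlying Lemma \ref{lem4.1}---coercivity here requires combining the real and imaginary parts of $\mathcal{A}_h(v_h,v_h)$ against the strongly penalized jump contributions, and one must confirm that the weights $\beta_0/h$ and $\delta_0 h$ are sufficient uniformly in both $h$ and $k$.
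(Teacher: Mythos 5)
Your proposal is correct in outline and, for \eqref{e4.5a} and \eqref{e4.5b}, coincides with the proof the paper actually invokes: the paper omits these two estimates and cites \cite[Lemma 5.2]{fw08a}, whose argument is exactly your C\'ea-type energy estimate via the G\aa rding inequality \eqref{e4.4} and the continuity bound \eqref{e4.3}, followed by Aubin--Nitsche duality against the coercive adjoint problem $-\Del\psi=u-\tilde{u}_h$ in $\Ome$, $\pa\psi/\pa\vec{n}_\Ome-\i k\psi=0$ on $\Ga$ (you identified the correct adjoint boundary condition for the $+\i k\Langle\cdot,\cdot\Rangle_\Ga$ term, and your reduction $|||\xi_h|||_{1,h}\lesssim\|\xi_h\|_{1,h}$ by trace-inverse estimates with $\beta=\beta_0h^{-1}$ is sound). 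The point you flag as unresolved---the $k$-dependence of the dual $H^2$-regularity constant, needed so that the duality step produces $(1+kh)kh^2$ rather than a looser factor---is genuinely the crux of that proof; it is supplied by the wave-number-explicit regularity theory of Cummings--Feng used in \cite{fw08a}, and the present paper inherits it wholesale by citation, so your plan relies on the same external input as the paper itself does.

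Where you genuinely diverge is \eqref{e4.5c}, the only estimate the paper proves in detail. The paper derives the identity \eqref{e4.5d} and tests it with $\ta_h=\ch_h-\tilde{\sig}_h$ for an \emph{arbitrary} $\ch_h\in\Sig_h$, obtaining $\|\sig-\tilde{\sig}_h\|_{L^2(\Ome)}\lesssim\|u-\tilde{u}_h\|_{1,h}+\inf_{\ch_h\in\Sig_h}\|\sig-\ch_h\|_{L^2(\Ome)}$; the best-approximation term is then bounded by $(k+k^{-1})h$ using $\|u\|_{H^2(\Ome)}\lesssim(k^{-1}+k)M(f,g)$ and absorbed into $(1+kh)^{1/2}kh$. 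You instead exploit the inclusion $\nab_hV_h\subset\Sig_h$---valid here because $r=\ell=1$, so gradients of piecewise linears are piecewise constants---to split $\sig-\tilde{\sig}_h=\nab_h(u-\tilde{u}_h)+(\nab_h\tilde{u}_h-\tilde{\sig}_h)$ and test \eqref{e4.2b} with $\ta_h=\nab_h\tilde{u}_h-\tilde{\sig}_h$, so that only the interior-edge terms in $u-\tilde{u}_h$ survive; with $\beta=\beta_0h^{-1}$, $\delta=\delta_0h$ and the polynomial trace-inverse inequality these are indeed dominated by the weighted jump terms in $\|u-\tilde{u}_h\|_{1,h}$ (note $\delta h_e^{-1/2}=\delta_0^{1/2}\delta^{1/2}$ and $h_e^{-1/2}=\beta_0^{-1/2}\beta^{1/2}$), so your computation closes. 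Your route is slightly cleaner, dispensing with the best-approximation term and the extra appeal to the $H^2$-regularity of $u$; the paper's version does not need $\nab_hV_h\subset\Sig_h$ and hence would survive choices of $\ell$ for which that inclusion fails. Both arguments yield the same bound $(1+kh)^{1/2}kh$.
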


\begin{proof}
Since the proof of \eqref{e4.5a} and \eqref{e4.5b} is essentially
same as that of \cite[Lemma 5.2]{fw08a}, we omit it to save the space
and refer the reader to \cite{fw08a} for the details.

To show \eqref{e4.5c}, on noting that \eqref{e4.2b} and the identity
$(\sig,\ta_h)=(\nab_h u,\ta_h)$ imply
\begin{align}\label{e4.5d}
(\sig-\tilde{\sig}_h, \ta_h)_\Ome &= (\nab_h u-\nab_h\tilde{u}_h, \ta_h)_\Ome
- \sum_{e\in \cE_h^I} \Bigl( \i\delta_0 h\Langle [[\nabla_h (u-\tilde{u}_h)]]
[[\ta_h]] \Rangle_e \\
&\hskip 0.6in
-\Langle [[u-\tilde{u}_h]], \{\ta_h\} \Rangle_e \Bigr)
\qquad\forall \ta_h\in \Sig_h. \nonumber
\end{align}
For any $\ch_h\in \Sig_h$, we set $\ta_h=\ch_h-\tilde{\sig}_h$. Then by \eqref{e4.5d},
the trace inequality, Schwarz inequality we get
\begin{align*}
&\|\sig-\tilde{\sig}_h\|_{L^2(\Ome)}^2
=(\sig-\tilde{\sig}_h, \sig-\ch_h)_\Ome + (\sig-\tilde{\sig}_h, \ta_h)_\Ome \\
&\qquad
=(\sig-\tilde{\sig}_h, \sig-\ch_h)_\Ome + (\nab_h (u-\tilde{u}_h), \ta_h)_\Ome \nonumber\\
&\qquad\qquad
-\sum_{e\in \cE_h^I} \Bigl( \i\delta_0 h\Langle [[\nabla_h (u-\tilde{u}_h)]],
[[\ta_h]] \Rangle_e -\Langle [[u-\tilde{u}_h]], \{\ta_h\} \Rangle_e \Bigr) \nonumber \\
&\qquad
\leq \|\sig-\tilde{\sig}_h\|_{L^2(\Ome)} \|\sig-\ch_h\|_{L^2(\Ome)}
+ \|\nab_h (u-\tilde{u}_h)\|_{L^2(\Ome)} \|\ta_h\|_{L^2(\Ome)} \nonumber\\
&\qquad\qquad
+C\Bigl(\sum_{e\in \cE_h^I} \delta_0 h
\|[[\nabla_h (u-\tilde{u}_h)]]\|_{L^2(e)}^2
+\beta_0 h^{-1}  \|[u-\tilde{u}_h]\|_{L^2(e)}^2 \nonumber\\
&\qquad\qquad\qquad
+\varepsilon\sum_{e\in \cE_h^I} h \|\ta_h\|_{L^2(e)}^2 \Bigr) \nonumber  \\
&\qquad
\leq \frac14 \|\sig-\tilde{\sig}_h\|_{L^2(\Ome)}^2 + \frac14 \|\ta_h\|_{L^2(\Ome)}^2
+\|\sig-\ch_h\|_{L^2(\Ome)}^2 + C\|u-\tilde{u}_h\|_{1,h}^2 \nonumber\\
&\qquad
\leq \frac12 \|\sig-\tilde{\sig}_h\|_{L^2(\Ome)}^2 + \frac54\|\sig-\ch_h\|_{L^2(\Ome)}^2
+ C\|u-\tilde{u}_h\|_{1,h}^2. \nonumber
\end{align*}
Hence, it follows from the above inequality, \eqref{e4.5a}, and the
polynomial approximation theory (cf. \cite{Brenner_Scott08}) that
\begin{align*}
\|\sig-\tilde{\sig}_h\|_{L^2(\Ome)}
&\leq C \|u-\tilde{u}_h\|_{1,h}+ 2\inf_{\ch_h\in \Sig_h} \|\sig-\ch_h\|_{L^2(\Ome)}\\
&\lesssim (1+kh)^{\frac12} hk + (k+k^{-1}) h \\
&\lesssim (1+kh)^{\frac12} hk,
\end{align*}
which gives \eqref{e4.5c}. The proof is complete.
\end{proof}

\subsubsection{\bf Global error estimates for the LDG method \#1} \label{sec-4.1.2}

In the preceding subsection we have derived the error bounds
for $(u-\tilde{u}_h,\sig-\tilde{\sig}_h)$. By the decomposition
$u-u_h=(u-\tilde{u}_h) + (\tilde{u}_h-u_h)$ and
$\sig-\sig_h= (\sig-\tilde{\sig}_h)+ (\tilde{\sig}_h-\sig_h)$
and the triangle inequality, it suffices to get
error bounds for $(\tilde{u}_h-u_h,\tilde{\sig}_h-\sig_h)$.
We shall accomplish this task by exploiting the linearity
of the Helmholtz equation and using the stability estimate
for the LDG method \#1 obtained in Section \ref{sec-3.1}.

First, on noting that $(u,\sig)$ satisfies
\begin{align}\label{e4.6}
A_h(u,\sig;v_h,\ta_h) =F(v_h,\ta_h) \qquad \forall (v_h,\ta_h)\in V_h\times\Sig_h.
\end{align}
Subtracting \eqref{e3.1} from \eqref{e4.6} yields the following
error equation (or Galerkin orthogonality):
\begin{align}\label{e4.7}
A_h(u-u_h,\sig-\sig_h;v_h,\ta_h) =0 \qquad\forall (v_h,\ta_h)\in V_h\times\Sig_h.
\end{align}

Next, to proceed we introduce the notation
\begin{alignat*}{3}
&u-u_h=e_h +q_h, &&\qquad e_h:=u-\tilde{u}_h, &&\quad q_h:=\tilde{u}_h-u_h,\\
&\sig-\sig_h = \ps_h +\ph_h, &&\qquad \ps_h:=\sig-\tilde{\sig}_h,
&&\quad \ph_h:=\tilde{\sig}_h-\sig_h.
\end{alignat*}
Then by \eqref{e4.7} and the definitions of the sesquilinear form $a_h$
and the elliptic projection we have
\begin{align}\label{e4.8}
A_h(q_h,\ph_h;v_h,\ta_h) &= -A_h(e_h,\ps_h;v_h,\ta_h) \\
&=-a_h(e_h,\ps_h;v_h,\ta_h) + k^2 (e_h, v_h) \nonumber \\
&=k^2 (e_h, v_h), \qquad\forall (v_h,\ta_h)\in V_h\times\Sig_h. \nonumber
\end{align}
The above equation implies that $(q_h,\ph_h)\in V_h\times\Sig_h$
is the LDG solution to the Helmholtz problem with source terms
$f=k^2 e_h$ and $g=0$. Then an application of the stability estimates
of Theorem \ref{ldg1_sta} immediately yields the following lemma.

\begin{proposition}\label{prop4.2}
There hold the following estimates for $(q_h,\ph_h)$:
\begin{align}\label{e4.9}
\|q_h\|_{DG}  &\lesssim  \gamma_1 (1+kh) k^2 h^2, \\
\|\ph_h\|_{L^2(\Ome)} &\lesssim
\gamma_1 (1+kh +\delta_0 kh^2)(1+kh) k h.  \label{e4.10}
\end{align}
\end{proposition}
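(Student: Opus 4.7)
The plan is to treat $(q_h,\ph_h)$ itself as the discrete LDG solution of a Helmholtz problem with a \emph{modified source}, and then invoke Theorem~\ref{ldg1_sta} together with the elliptic-projection bounds from Proposition~\ref{prop4.1}. The Galerkin orthogonality \eqref{e4.7}, combined with the defining property of the elliptic projection \eqref{e4.1} (which kills the $a_h$-piece of $A_h$ and leaves only the $-k^2(\cdot,\cdot)_\Ome$ contribution), collapses to identity \eqref{e4.8}, namely
\begin{equation*}
A_h(q_h,\ph_h;v_h,\ta_h) = (k^2 e_h,v_h)_\Ome \qquad\forall (v_h,\ta_h)\in V_h\times\Sig_h.
\end{equation*}
This is precisely the LDG formulation \eqref{e3.1} with data $(f,g)$ replaced by $(k^2 e_h,0)$, so the quantity $M(f,g)$ of \eqref{Mfg} specializes to $M(k^2 e_h,0)=k^2\|e_h\|_{L^2(\Ome)}$.

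Next, I would apply the two stability bounds \eqref{e3.22} and \eqref{e3.23} of Theorem~\ref{ldg1_sta} verbatim to $(q_h,\ph_h)$, yielding
\begin{align*}
\|q_h\|_{DG} &\lesssim \gamma_1 k^{-1}\cdot k^2\|e_h\|_{L^2(\Ome)} = \gamma_1 k\,\|e_h\|_{L^2(\Ome)}, \\
\|\ph_h\|_{L^2(\Ome)} &\lesssim \gamma_1 k^{-1}\Bigl(1+(\delta+k^{-1})\max_{K\in\cT_h}h_K^{-1}\Bigr) k^2\|e_h\|_{L^2(\Ome)}.
\end{align*}

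To finish, I would insert the $L^2$ projection estimate \eqref{e4.5b}, $\|e_h\|_{L^2(\Ome)}\lesssim (1+kh)kh^2$, which gives \eqref{e4.9} at once. For \eqref{e4.10}, under the scalings $\delta=\delta_0 h$ and $\max_K h_K^{-1}\lesssim h^{-1}$ the prefactor $(\delta+k^{-1})\max_K h_K^{-1}$ reduces to $\delta_0+(kh)^{-1}$, and a short algebraic rearrangement (pulling out a factor of $kh$) produces the stated form.

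Because all of the heavy lifting has already been carried out in Theorem~\ref{ldg1_sta} (the generalized \emph{inf-sup}/stability engine) and Proposition~\ref{prop4.1} (the elliptic projection error bounds), this proof is essentially a composition and I do not anticipate any genuine obstacle. The only item requiring a touch of care is correctly identifying the modified source pair $(k^2 e_h,0)$ in the error equation and then tracking the mesh- and wave-number dependence through the flux stability factor for the $\ph_h$ bound.
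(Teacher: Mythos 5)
Your proposal is correct and is exactly the paper's argument: the paper likewise observes that \eqref{e4.8} exhibits $(q_h,\ph_h)$ as the LDG solution of \eqref{e3.1} with data $f=k^2e_h$ and $g=0$, so that $M(k^2e_h,0)=k^2\|e_h\|_{L^2(\Ome)}$, and then applies Theorem \ref{ldg1_sta} together with the projection bound \eqref{e4.5b}. One cosmetic remark on your final rearrangement: substituting $\delta=\delta_0 h$ and $\max_K h_K^{-1}\lesssim h^{-1}$ into \eqref{e3.23} literally produces the bracket $(1+kh+\delta_0 kh)$ rather than $(1+kh+\delta_0 kh^2)$, but since the generic constants may depend on $\delta_0$ these brackets are equivalent up to constants, so \eqref{e4.10} follows as you claim.
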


Combining Proposition \ref{prop4.1} and \ref{prop4.2}, using the triangle
inequality and the standard duality argument give the following
error estimates for $(u_h,\sig_h)$.

\begin{theorem}\label{thm4.1}
Let $u\in H^2(\Ome)$ be the solution to problem \eqref{helm-1}--\eqref{helm-2}
and $\sig:=\nab u$, and  $(u_h,\sig_h)$ be the solution to problem
\eqref{e3.1}. Then there hold the following error estimates for $(u_h,\sig_h)$:
\begin{align}\label{e4.11a}
&\|u-u_h\|_{1,h} +k^{\frac12} \|u-u_h\|_{L^2(\Gamma)}
\lesssim \bigl((1+kh)^{\frac12} + \gamma_1 (1+kh)kh \bigr) kh, \\
&\|u-u_h\|_{L^2(\Ome)} \lesssim (1+\gamma_1)(1+kh) k h^2, \label{e4.11b} \\
&\|\sig-\sig_h\|_{L^2(\Ome)} \lesssim \bigl((1+kh)^{\frac12}
+\gamma_1 (1+kh)(1+kh+\delta_0 kh^2)\bigr) k h. \label{e4.11c}
\end{align}
\end{theorem}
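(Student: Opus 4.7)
The plan is to exploit the linear decomposition
\(u - u_h = e_h + q_h\) and \(\sig - \sig_h = \ps_h + \ph_h\), where
\(e_h := u - \tilde{u}_h\) is the elliptic-projection error in the scalar
variable, \(q_h := \tilde{u}_h - u_h \in V_h\) is the discrete remainder,
and \(\ps_h, \ph_h\) are the analogous pieces of the flux error. A single
triangle inequality reduces each of the three target estimates
\eqref{e4.11a}--\eqref{e4.11c} to the sum of a ``projection'' contribution
and a ``discrete'' contribution, and these are already individually
controlled by Propositions \ref{prop4.1} and \ref{prop4.2}. No further
PDE-level or DG-level identity is needed.

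For the projection contributions I would invoke Proposition \ref{prop4.1}
directly: estimate \eqref{e4.5a} supplies the \((1+kh)^{1/2}kh\) bound on
\(\|e_h\|_{1,h} + k^{1/2}\|e_h\|_{L^2(\Gamma)}\); \eqref{e4.5b} supplies
the \((1+kh)kh^2\) bound on \(\|e_h\|_{L^2(\Ome)}\); and \eqref{e4.5c}
supplies the \((1+kh)^{1/2}kh\) bound on \(\|\ps_h\|_{L^2(\Ome)}\). These
three estimates match precisely the first summand on the right-hand side
of each of \eqref{e4.11a}--\eqref{e4.11c}. (This is also where the
``standard duality argument'' mentioned in the paper has already been
used internally, to upgrade the \(H^1\)-type bound on \(e_h\) to the
\(h^2\)-order \(L^2\)-bound \eqref{e4.5b}.)

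For the discrete contributions I would use Proposition \ref{prop4.2}.
Rewriting the Galerkin orthogonality \eqref{e4.7} in the rearranged form
\eqref{e4.8} shows that \((q_h,\ph_h)\) is itself an LDG solution with
modified data \((f,g) = (k^2 e_h,\, 0)\), so the absolute stability of
Theorem \ref{ldg1_sta} applies verbatim and yields the bounds
\eqref{e4.9}--\eqref{e4.10}. From \eqref{e4.9} and the definition of
\(\|\cdot\|_{DG}\) in \eqref{e3.4c} I would extract
\[
k\|q_h\|_{L^2(\Ome)},\quad k\|q_h\|_{L^2(\Gamma)},\quad |q_h|_{1,h}\ \leq\ \|q_h\|_{DG}\ \lesssim\ \gamma_1(1+kh)k^2h^2,
\]
which accounts for every discrete contribution to \eqref{e4.11a} and
\eqref{e4.11b}; in \eqref{e4.11b} one additionally divides by \(k\) to
get \(\|q_h\|_{L^2(\Ome)} \lesssim \gamma_1(1+kh)kh^2\), which combines
with the projection \(L^2\)-error into the overall factor \((1+\gamma_1)\).
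The flux remainder \(\|\ph_h\|_{L^2(\Ome)}\) is handled directly by
\eqref{e4.10}.

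I do not anticipate a genuine obstacle: the hard work has already been
done in Section \ref{sec-3} (absolute stability via the generalized
\emph{inf-sup} condition) and in Proposition \ref{prop4.1} (elliptic
projection errors). The remaining step is essentially bookkeeping --
triangle inequalities and unpacking the \(\|\cdot\|_{DG}\)-norm -- together
with the mild observation that harmless multiplicative factors of
\(k^{\pm 1/2}\) arising when passing from \(\|q_h\|_{DG}\) to its
individual components are absorbed by the built-in factor
\(\gamma_1 \geq 1+k\) in \eqref{e3.4b}. The subtlest point is the
\(L^2(\Ome)\) estimate \eqref{e4.11b}, where one must verify that the
projection bound \((1+kh)kh^2\) and the discrete bound
\(\gamma_1(1+kh)kh^2\) sum into the advertised
\((1+\gamma_1)(1+kh)kh^2\) rather than picking up an extra factor of~\(k\);
this is the one spot where a careless estimate would give a suboptimal
power of \(k\).
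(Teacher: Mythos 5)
Your proposal is correct and coincides with the paper's own (deliberately brief) argument: the paper proves Theorem \ref{thm4.1} exactly by combining Propositions \ref{prop4.1} and \ref{prop4.2} through the decomposition $u-u_h=e_h+q_h$, $\sig-\sig_h=\ps_h+\ph_h$ and the triangle inequality, with the duality argument already embedded in the $L^2$ projection bound \eqref{e4.5b}, just as you observe. The one detail you leave implicit --- that the jump terms $\beta\|[q_h]\|_{L^2(e)}^2$ and $\delta\|[[\nab_h q_h]]\|_{L^2(e)}^2$ appearing in $\|q_h\|_{1,h}$ are not components of $\|q_h\|_{DG}$ and must be absorbed via trace/inverse inequalities using $\beta=\beta_0 h^{-1}$, $\delta=\delta_0 h$ --- is equally implicit in the paper and costs only the powers of $h$ and $k$ you already budgeted for.
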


\subsection{Error estimates for the LDG method \#2} \label{sec-4.2}
The error analysis for the LDG method \#2 essentially follows the same
lines as that for the LDG method \#1 given in the previous subsection.
However, there are three main differences which we now explain.
First, the sesquilinear form $a_h$ needs to be
replaced by another sesquilinear form $b_h$ in the definition of the
elliptic projection \eqref{e4.1}, where $b_h$ is defined by
\begin{align}\label{e4.13}
b_h(w_h, \ch_h; v_h,\ta_h) :&=B_h(w_h,\ch_h;v_h,\ta_h) +k^2 (w_h,v_h)_\Ome \\
&=(\ch_h,\nabla_h v_h)_\Ome +\i k\Langle w_h,v_h\Rangle_\Ga \nonumber \\
&\qquad
-\sum_{e\in \cE_h^I}\Langle \{\ch_h\}-\i \beta [[w_h]],[[v_h]] \Rangle_e
\nonumber \\
&\qquad
-\sum_{e\in \cE_h^I} \Bigl( \i\delta\Langle [[\ch_h]],[[\ta_h]]\Rangle_e
-\Langle [[w_h]],\{\ta_h\} \Rangle_e \Bigr) \nonumber \\
&\qquad
+(\ch_h,\ta_h)_\Ome -(\nabla_h w_h,\ta_h)_\Ome. \nonumber
\end{align}

Second, due to strong coupling between $\tilde{u}_h$ and $\tilde{\sig}_h$,
the error estimates for the new elliptic projection $(\tilde{u}_h,\tilde{\sig}_h)$
must be derived differently. To the end, we need the following lemma,
which replaces Lemma \ref{lem4.1}.

\begin{lemma}\label{lem4.2}
Let $\beta=\beta_0 h^{-1}$ and $\delta=\delta_0 h$ for some positive
constants $\beta_0$ and $\delta_0$.

{\rm (i)} There exists an $h$- and $k$-independent constant $c_3>0$
such that the sesquilinear form $b_h$ satisfies the following generalized
{\em inf-sup} condition: for any fixed $(w_h, \ch_h)\in V_h\times \Sig_h$
\begin{align}\label{e4.13a}
&\sup_{(v_h,\ta_h)\in V_h\times \Sig_h}
\frac{\re b_h(w_h, \ch_h; v_h,\ta_h)}{|||(v_h,\ta_h)|||_{DG}} \\
&\hskip 0.8in
+ \sup_{(v_h,\ta_h)\in V_h\times \Sig_h}
\frac{\im b_h(w_h, \ch_h; v_h,\ta_h)}{|||(v_h,\ta_h)|||_{DG}}
\geq c_3 |||(w_h,\ch_h)|||_{DG}. \nonumber
\end{align}

{\rm (ii)} There exists an $h$- and $k$-independent constant $C>0$ such that
for any $(w, \ch),(v,\ta)\in H^2(\cT_h)\times H^1(\cT_h)^d$,  there holds
\begin{align}\label{e4.13b}
|b_h(w, \ch; v,\ta)|\leq C |||(w,\ch)|||_{1,h} |||(v,\ta)|||_{1,h},
\end{align}
where
\begin{align}\label{e4.13c}
|||(w,\ch)|||_{DG} &:= \Bigl(\|w\|_{1,h}^2+\|\ch\|_{L^2(\Ome)}^2 \Bigr)^{\frac12},\\
|||(w,\ch)|||_{1,h} &:= \Bigl(|||(w,\ch)|||_{DG}^2+ \sum_{e\in \cE_h^I}
\beta^{-1}\|\{\ch\}\|_{L^2(e)}^2 \Bigr)^{\frac12}. \label{e4.13d}
\end{align}
\end{lemma}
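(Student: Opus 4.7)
The plan is to prove parts (i) and (ii) in turn, with part (i) carrying the substance. The key observation is that $b_h = B_h + k^2(\cdot,\cdot)_\Ome$ has the indefinite term removed, so $b_h$ behaves coercively and its inf-sup constant $c_3$ can be made absolute (independent of $h$ and $k$). In particular, no Rellich-type test function of the form $\al\cdot\nabla_h w_h$ is needed, unlike in Proposition~\ref{prop3.2}; simple local choices suffice. I would bound $\re b_h$ and $\im b_h$ separately by different test functions and then combine them.

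For the real part, I would take $(v_h,\ta_h) = (w_h,\, \ch_h - \mu\nabla_h w_h)$ for a small $\mu>0$. A direct computation, using the cancellation of antisymmetric pairs such as $(\ch_h,\nabla_h w_h) - (\nabla_h w_h,\ch_h)$ and $\langle [[w_h]],\{\ch_h\}\rangle - \langle\{\ch_h\},[[w_h]]\rangle$ in the real part, yields
\[
\re b_h(w_h,\ch_h;w_h,\ch_h - \mu\nabla_h w_h) = \|\ch_h\|_{L^2(\Ome)}^2 + \mu\|\nabla_h w_h\|_{L^2(\Ome)}^2 - \mu\,\re(\ch_h,\nabla_h w_h) - R_1,
\]
where $R_1$ collects edge residuals; Cauchy--Schwarz with $\mu$ small absorbs the $(\ch_h,\nabla_h w_h)$ cross term. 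For the imaginary part, taking $(v_h,\ta_h) = (w_h,\ch_h)$ gives
\[
\im b_h = k\|w_h\|_{L^2(\Ga)}^2 + \beta\sum_{e\in\cE_h^I}\|[[w_h]]\|_{L^2(e)}^2 - \delta\sum_{e\in\cE_h^I}\|[[\ch_h]]\|_{L^2(e)}^2 + 2\im(\ch_h,\nabla_h w_h) - 2\sum_{e\in\cE_h^I}\im\langle\{\ch_h\},[[w_h]]\rangle_e.
\]
The unfavorable contributions are absorbed using Cauchy--Schwarz, the polynomial inverse trace $\|[[\ch_h]]\|_{L^2(e)}^2 \lesssim h_e^{-1}\|\ch_h\|_{L^2(K\cup K')}^2$, and the scalings $\beta = \beta_0 h^{-1}$, $\delta = \delta_0 h$. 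In particular, $-\delta\sum_e\|[[\ch_h]]\|^2$ reduces to $-C\delta_0\|\ch_h\|_{L^2(\Ome)}^2$, absorbed into $\|\ch_h\|^2$ from the real part once $\delta_0$ is small. The $\delta\|[[\nabla_h w_h]]\|^2$ contribution to $\|w\|_{1,h}^2$ is similarly controlled by $C\delta_0\|\nabla_h w_h\|_{L^2(\Ome)}^2$ via inverse trace, so this part of the norm is essentially redundant for piecewise polynomials.

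For part (ii), the boundedness \eqref{e4.13b} follows termwise from Cauchy--Schwarz combined with the scaled trace inequality $\|v\|_{L^2(e)}^2 \lesssim h_K^{-1}\|v\|_{L^2(K)}^2 + h_K\|\nabla v\|_{L^2(K)}^2$ valid for $v\in H^1(K)$. The augmented norm $|||\cdot|||_{1,h}$ incorporates the extra term $\sum_e\beta^{-1}\|\{\ch\}\|_{L^2(e)}^2$ precisely so that the flux contribution $\sum_e\langle\{\ch\},[[v]]\rangle_e$ can be bounded by the pairing $\beta^{-1/2}\|\{\ch\}\|_{L^2(e)}\cdot\beta^{1/2}\|[[v]]\|_{L^2(e)}$; every other term pairs naturally with an element of the norm using $\beta = \beta_0 h^{-1}$ and $\delta = \delta_0 h$ to balance the $h$-powers arising from the trace inequality.

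The principal obstacle is in part (i): obtaining an inf-sup constant independent of both $h$ and $k$ despite the negatively signed $-\delta\sum_e\|[[\ch_h]]\|^2$ and the unsigned cross term $2\im(\ch_h,\nabla_h w_h)$ in $\im b_h$. The resolution hinges on the scalings $\beta = \beta_0 h^{-1}$ and $\delta = \delta_0 h$ with $\delta_0$ small and $\beta_0$ large, which convert each problematic edge contribution into a constant multiple of a target norm term so that absorption is possible. Bookkeeping the cross terms between the two supremums so that they combine into dominable quantities rather than accumulating is the most delicate step of the argument.
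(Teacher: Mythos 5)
Your overall route coincides with the paper's own: the paper proves (i) by evaluating the real-part quotient at $(v_h,\ta_h)=\bigl((1+C_1)w_h,\,C_1\ch_h-\nab_h w_h\bigr)$ and the imaginary-part quotient at $(C_2w_h,C_2\ch_h)$, and your choices $(w_h,\,\ch_h-\mu\nab_h w_h)$ and $(w_h,\ch_h)$ are rescalings of the same family of test functions. Your identities check out: the antisymmetric volume and edge pairs do drop out of $\re b_h$, and your formula for $\im b_h(w_h,\ch_h;w_h,\ch_h)$ agrees with the computation behind \eqref{e3.2.5b1} (here, unlike in Proposition \ref{prop3.2}, one cannot use membership in $\mathcal{S}_h$ to make it sign-definite, so the cross terms must indeed be absorbed, as you recognize). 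Your treatment of (ii) matches the paper's one-line justification via Schwarz and scaled trace inequalities, with $\sum_e\beta^{-1}\|\{\ch\}\|_{L^2(e)}^2$ playing exactly the role you assign it.

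The genuine defect is your closing claim that the argument ``hinges on'' $\delta_0$ small and $\beta_0$ large. The lemma asserts (i) for arbitrary fixed positive $\beta_0,\delta_0$, with $c_3$ independent only of $h$ and $k$; indeed the paper's numerical experiments use $\beta_0=0.001$, and Proposition \ref{prop4.3} invokes the lemma without any size restriction, so a proof valid only under those restrictions does not establish the statement. With your test function the restriction is real: the real part supplies only $1\cdot\|\ch_h\|_{L^2(\Ome)}^2+\mu\|\nab_h w_h\|_{L^2(\Ome)}^2$, while the imaginary part produces $-C\delta_0\|\ch_h\|_{L^2(\Ome)}^2$ (from $-\delta\sum_e\|[[\ch_h]]\|_{L^2(e)}^2$ via the inverse trace inequality), $-C\beta_0^{-1}\|\ch_h\|_{L^2(\Ome)}^2$ (from the $\langle [[w_h]],\{\ch_h\}\rangle_e$ term), and $-\epsilon\|\nab_h w_h\|_{L^2(\Ome)}^2-\epsilon^{-1}\|\ch_h\|_{L^2(\Ome)}^2$ (from $2\im(\ch_h,\nab_h w_h)$, where $\epsilon\le\mu/2$ forces $\epsilon^{-1}\ge 2/\mu$); a unit multiple of $\|\ch_h\|^2_{L^2(\Ome)}$ cannot absorb these unless $\delta_0$, $\beta_0^{-1}$ and $\mu^{-1}$ are all small. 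The repair is exactly the paper's device: place a large constant in front of the flux component, $(v_h,\ta_h)=\bigl((1+C_1)w_h,\,C_1\ch_h-\nab_h w_h\bigr)$ with $C_1=C_1(\beta_0,\delta_0)$ large, which gives $\re b_h\geq C_1\|\ch_h\|_{L^2(\Ome)}^2+\|\nab_h w_h\|_{L^2(\Ome)}^2$ minus absorbable edge residuals; note that the volume cross terms $(1+C_1)\re(\ch_h,\nab_h w_h)-\re(\ch_h,\nab_h w_h)-C_1\re(\nab_h w_h,\ch_h)$ then cancel exactly, so no $\mu\,\re(\ch_h,\nab_h w_h)$ term even appears. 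Equivalently, you may keep your $\mu$ but weight the two suprema by a constant $\lambda=\lambda(\beta_0,\delta_0,\mu)\geq 1$, using that each supremum dominates the absolute value of any single quotient so that $|A|+|B|\geq |\lambda A+B|/\lambda$. Either way the denominator inflation is harmless, since $\|\nab_h w_h\|_{L^2(\Ome)}\le \|w_h\|_{1,h}$ gives $|||\bigl((1+C_1)w_h,\,C_1\ch_h-\nab_h w_h\bigr)|||_{DG}\le C(C_1)\,|||(w_h,\ch_h)|||_{DG}$, and the resulting $c_3$ depends on $\beta_0,\delta_0$ but not on $h$ or $k$, as the lemma requires.
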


The proof of (i) is based on evaluating the first quotient on the left-hand
side of \eqref{e4.13a} at $(v_h,\ta_h)= \bigl((1+C_1)w_h,C_1\ch_h-\nab_h w_h\bigr)$
and evaluating the second quotient at $(v_h,\ta_h)= (C_2w_h,C_2\ch_h)$
for some sufficiently large positive constants $C_1$ and $C_2$.
The proof of (ii) is a straightforward application of Schwarz and
trace inequalities. We skip the rest of the derivation to save space.

The above generalized {\em inf-sup} condition, the boundedness
of the sesquilinear form $b_h$, and the duality argument
(cf. \cite{Brenner_Scott08}) readily infer the
following error estimates for the new elliptic projection
$(\tilde{u}_h,\tilde{\sig}_h)$. We omit the proof since it is standard.

\begin{proposition}\label{prop4.3}
Under the assumptions of Proposition \ref{prop4.1}, there hold the following
estimates:
\begin{align}\label{e4.13e}
\|u-\tilde{u}_h\|_{1,h} +\|\sig-\tilde{\sig}_h\|_{L^2(\Ome)} &\lesssim kh, \\
\|u-\tilde{u}_h\|_{L^2(\Ome)} &\lesssim  k^2h^2. \label{e4.13f}
\end{align}
\end{proposition}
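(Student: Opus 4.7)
The plan is to apply the standard Babuška–Brezzi machinery (together with an Aubin–Nitsche duality argument), which is now available thanks to the $h$- and $k$-independent generalized \emph{inf-sup} condition and boundedness of $b_h$ recorded in Lemma \ref{lem4.2}. Throughout, I will use the $H^2$-regularity estimate $\|u\|_{H^2(\Ome)}\lesssim (k+k^{-1})M(f,g)\lesssim k$ recalled just above Proposition \ref{prop4.1}.

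\textbf{Step 1 (energy-norm estimate).} Let $P_hu\in V_h$ and $\Pi_h\sig\in\Sig_h$ be any standard interpolants (e.g.\ nodal/$L^2$ projections). Subtracting the identity defining the elliptic projection from $b_h(u,\sig;v_h,\ta_h)=b_h(u,\sig;v_h,\ta_h)$ yields the Galerkin orthogonality
\[
b_h(u-\tilde u_h,\sig-\tilde\sig_h;v_h,\ta_h)=0 \qquad \forall (v_h,\ta_h)\in V_h\times\Sig_h,
\]
hence $b_h(P_hu-\tilde u_h,\Pi_h\sig-\tilde\sig_h;\cdot,\cdot)=b_h(P_hu-u,\Pi_h\sig-\sig;\cdot,\cdot)$. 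The \emph{inf-sup} condition \eqref{e4.13a} on the left and the boundedness \eqref{e4.13b} on the right give, after noticing that on $V_h\times\Sig_h$ the two norms $|||\cdot|||_{1,h}$ and $|||\cdot|||_{DG}$ are equivalent (since $\beta^{-1}=h/\beta_0$ and an inverse/trace inequality bound $\beta^{-1}\|\{\ta_h\}\|_{L^2(e)}^2$ by $\|\ta_h\|_{L^2(K)}^2$),
\[
|||(P_hu-\tilde u_h,\Pi_h\sig-\tilde\sig_h)|||_{DG} \;\lesssim\; |||(u-P_hu,\sig-\Pi_h\sig)|||_{1,h}.
\]
Standard polynomial approximation theory (carefully handling the boundary term $k^{1/2}\|u-P_hu\|_{L^2(\Ga)}$ and the edge term $\beta^{-1}\|\{\sig-\Pi_h\sig\}\|_{L^2(e)}^2$) gives the right-hand side $\lesssim h\|u\|_{H^2(\Ome)}\lesssim kh$. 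A triangle inequality then yields \eqref{e4.13e}.

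\textbf{Step 2 ($L^2$ estimate via duality).} Let $\phi\in H^2(\Ome)$ solve the adjoint Helmholtz problem
\[
-\Del\phi-k^2\phi=u-\tilde u_h \quad\text{in }\Ome,\qquad \tfrac{\pa\phi}{\pa\vec n_\Ome}-\i k\phi=0\quad\text{on }\Ga,
\]
and set $\ps=\nab\phi$. The adjoint problem enjoys the same $H^2$-regularity, so $\|\phi\|_{H^2(\Ome)}\lesssim k\|u-\tilde u_h\|_{L^2(\Ome)}$. Multiplying the adjoint equation by $u-\tilde u_h$, integrating by parts element-by-element (the interior jumps of $\phi,\ps$ vanish, and the impedance boundary term accounts for the $\Langle\cdot,\cdot\Rangle_\Ga$ contribution) identifies
\[
\|u-\tilde u_h\|_{L^2(\Ome)}^2 \;=\; \overline{b_h(u-\tilde u_h,\sig-\tilde\sig_h;\phi,\ps)}.
\]
Using Galerkin orthogonality to subtract $(P_h\phi,\Pi_h\ps)$, the boundedness \eqref{e4.13b}, Step~1, and the approximation estimate $|||(\phi-P_h\phi,\ps-\Pi_h\ps)|||_{1,h}\lesssim h\|\phi\|_{H^2}\lesssim kh\|u-\tilde u_h\|_{L^2(\Ome)}$, we get
\[
\|u-\tilde u_h\|_{L^2(\Ome)}^2 \;\lesssim\; (kh)\cdot(kh)\,\|u-\tilde u_h\|_{L^2(\Ome)},
\]
and dividing by $\|u-\tilde u_h\|_{L^2(\Ome)}$ yields \eqref{e4.13f}.

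\textbf{Main obstacle.} The chief technical point is justifying the duality identity $\|u-\tilde u_h\|_{L^2}^2=\overline{b_h(u-\tilde u_h,\sig-\tilde\sig_h;\phi,\ps)}$ when the test pair $(\phi,\ps)$ lies in $H^2\times H^1$ rather than in $V_h\times\Sig_h$: one must confirm that (i) all interior jump terms in $b_h$ that involve $\phi$ or $\ps$ vanish because these are globally $H^1$/$H^2$, (ii) the impedance boundary condition for $\phi$ combined with the LDG boundary fluxes reproduces the correct surface contribution after integration by parts, and (iii) the boundedness estimate \eqref{e4.13b} extends to the mixed continuous–discrete test pair. The rest is bookkeeping with interpolation error estimates, for which I would closely follow the treatment of the analogous step in \cite{Brenner_Scott08,fw08a}.
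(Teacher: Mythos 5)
Your overall route --- Galerkin orthogonality for the elliptic projection, the $k$-independent generalized \emph{inf-sup} condition \eqref{e4.13a} plus boundedness \eqref{e4.13b} to get the energy-norm bound, then Aubin--Nitsche duality for the $L^2$ bound --- is precisely the standard argument the paper invokes (its own proof is omitted as ``standard,'' citing exactly these three ingredients). Step 1 is sound, including the needed observation that $|||\cdot|||_{1,h}\simeq|||\cdot|||_{DG}$ on $V_h\times\Sig_h$ because $\beta^{-1}\simeq h$ combines with a trace-inverse inequality. (Pedantically, the boundary term gives $k^{1/2}\|u-P_hu\|_{L^2(\Ga)}\lesssim k^{1/2}h^{3/2}\|u\|_{H^2}\lesssim (kh)^{1/2}\,kh$, so one really gets $(1+kh)^{1/2}kh$ unless $kh\lesssim 1$ is assumed; the paper's statement is equally loose here, cf.\ Proposition \ref{prop4.1}, so this is minor.)

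Step 2, however, contains a genuine error: you set up the dual problem for the wrong operator. The form $b_h=B_h+k^2(\cdot,\cdot)_\Ome$ has the $-k^2(w,v)_\Ome$ term cancelled (see \eqref{e4.13}); it is the coercive form associated with $-\Del$ under the impedance condition, \emph{not} with the Helmholtz operator. Indeed, for smooth $(\phi,\ps)$ with $\ps=-\nab\phi$, elementwise integration by parts gives, for any broken $(w,\ch)$,
\begin{equation*}
b_h(w,\ch;\phi,-\nab\phi)=(w,-\Del\phi)_\Ome
+\Langle w,\tfrac{\pa\phi}{\pa\vec{n}_\Ome}-\i k\phi\Rangle_\Ga ,
\end{equation*}
all interior jump terms cancelling. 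So the correct dual problem is $-\Del\phi=u-\tilde{u}_h$ in $\Ome$ with $\pa\phi/\pa\vec{n}_\Ome-\i k\phi=0$ on $\Ga$ (your boundary condition is right, your PDE is not), and the test flux must be $\ps=-\nab\phi$, not $+\nab\phi$: with $\ps=+\nab\phi$ the terms $(\ch,\nab_h v)_\Ome$ and $(\ch,\ta)_\Ome$ add rather than cancel, leaving an uncontrolled $2(\sig-\tilde{\sig}_h,\nab\phi)_\Ome$ of size $O(kh)\|u-\tilde u_h\|_{L^2(\Ome)}$, which would only yield $\|u-\tilde u_h\|_{L^2(\Ome)}\lesssim kh$. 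Worse, with your Helmholtz dual $-\Del\phi-k^2\phi=u-\tilde u_h$ the claimed identity fails by the term $k^2(u-\tilde u_h,\phi)_\Ome$, which by the Helmholtz stability bound $\|\phi\|_{L^2(\Ome)}\lesssim k^{-1}\|u-\tilde u_h\|_{L^2(\Ome)}$ can be as large as $k\|u-\tilde u_h\|_{L^2(\Ome)}^2$ and cannot be absorbed for large $k$. The fix is local: use the Poisson-impedance dual above, whose regularity $\|\phi\|_{H^2(\Ome)}\lesssim(1+k)\|u-\tilde u_h\|_{L^2(\Ome)}$ (from testing with $\phi$ and standard elliptic regularity, the factor $k$ entering only through the boundary condition) feeds into your own chain of inequalities and produces exactly \eqref{e4.13f}; everything else in your argument then goes through.
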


The third difference is that the new error function $(q_h,\ph_h)$ now satisfies
\begin{align}\label{e4.14}
B_h(q_h,\ph_h;v_h,\ta_h) =k^2 (e_h, v_h)
\qquad\forall (v_h,\ta_h)\in V_h\times\Sig_h.
\end{align}
As a result, by Theorem \ref{ldg2_sta} and \eqref{sec3.2:lem1} we get
\begin{align}\label{e4.15}
|q_h|_{1,h}+\|(q_h,\ph_h)\|_{DG} \lesssim \gamma_2 (1+kh) k^2 h^2,
\end{align}
which replaces estimates \eqref{e4.9} and \eqref{e4.10}.

After having established Proposition \ref{prop4.3} and
\eqref{e4.15}, once again, by the triangle inequality we arrive at
the following error estimates for the solution $(u_h,\sig_h)$
to the LDG method \#2.

\begin{theorem}\label{thm4.2}
Let $u\in H^2(\Ome)$ be the solution to problem \eqref{helm-1}--\eqref{helm-2}
and $\sig:=\nab u$, and  $(u_h,\sig_h)$ be the solution to problem
\eqref{e3.2.1}. Then there hold the following error estimates for $(u_h,\sig_h)$:
\begin{align}\label{e4.16}
&\|u-u_h\|_{1,h} +k^{\frac12} \|u-u_h\|_{L^2(\Gamma)}  \\
&\hskip 0.7in
+\|\sig-\sig_h\|_{L^2(\Ome)} \lesssim \bigl(1+\gamma_2 (1+kh)kh)\bigr) kh, \nonumber\\
&\|u-u_h\|_{L^2(\Ome)} \lesssim (1+\gamma_2 (1+kh)) k^2 h^2. \label{e4.17}
\end{align}
\end{theorem}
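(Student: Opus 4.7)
The plan is to split the error by means of the elliptic projection $(\tilde u_h,\tilde\sig_h)\in V_h\times\Sig_h$ defined via $b_h$, writing
\begin{align*}
u-u_h &= (u-\tilde u_h) + (\tilde u_h - u_h) =: e_h + q_h,\\
\sig-\sig_h &= (\sig-\tilde\sig_h)+(\tilde\sig_h-\sig_h) =: \ps_h + \ph_h,
\end{align*}
and then control each piece separately. For the projection errors $(e_h,\ps_h)$ we just invoke Proposition \ref{prop4.3}, which already gives $\|e_h\|_{1,h}+\|\ps_h\|_{L^2(\Ome)}\lesssim kh$ and $\|e_h\|_{L^2(\Ome)}\lesssim k^2h^2$. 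So the only real work is to bound the discrete remainder $(q_h,\ph_h)$.

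For $(q_h,\ph_h)$ I would exploit Galerkin orthogonality: subtracting \eqref{e3.2.1} from the analogue of \eqref{e4.6} for $B_h$ yields $B_h(u-u_h,\sig-\sig_h;v_h,\ta_h)=0$, so by the definition $b_h=B_h+k^2(\cdot,\cdot)_\Ome$ and the projection identity $b_h(e_h,\ps_h;\cdot,\cdot)=0$, we obtain the cleaned-up error equation \eqref{e4.14}, namely $B_h(q_h,\ph_h;v_h,\ta_h)=k^2(e_h,v_h)_\Ome$. This shows $(q_h,\ph_h)$ is the LDG \#2 solution with data $(f,g)=(k^2 e_h,0)$, and in particular $(q_h,\ph_h)\in\mathcal{S}_h$, so the stability estimate of Theorem \ref{ldg2_sta} applies: $\norme{(q_h,\ph_h)}_{DG}\lesssim \gamma_2 k^{-1}\|k^2 e_h\|_{L^2(\Ome)}\lesssim \gamma_2 k\,\|e_h\|_{L^2(\Ome)}\lesssim \gamma_2(1+kh)k^2h^2$, which is \eqref{e4.15}. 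The auxiliary bound \eqref{sec3.2:lem1} then upgrades this to control of $|q_h|_{1,h}$ by the same quantity (modulo absorbing $\|\ph_h\|_{L^2(\Ome)}$ and jump terms already contained in $\norme{(q_h,\ph_h)}_{DG}$).

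Finally I combine via the triangle inequality: $\|u-u_h\|_{1,h}\le\|e_h\|_{1,h}+|q_h|_{1,h}\lesssim kh+\gamma_2(1+kh)k^2h^2$, and similarly for $k^{1/2}\|u-u_h\|_{L^2(\Ga)}$ and $\|\sig-\sig_h\|_{L^2(\Ome)}$, which gives \eqref{e4.16} after factoring $kh$. For the $L^2$-estimate \eqref{e4.17}, bounding $\|q_h\|_{L^2(\Ome)}$ straight from $\norme{(q_h,\ph_h)}_{DG}$ already costs only a factor $k^{-1}$, yielding $\|q_h\|_{L^2(\Ome)}\lesssim \gamma_2(1+kh)kh^2$; adding $\|e_h\|_{L^2(\Ome)}\lesssim k^2h^2$ from Proposition \ref{prop4.3} gives \eqref{e4.17} after absorbing constants.

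The main obstacle is really the membership $(q_h,\ph_h)\in\mathcal{S}_h$ — one has to check carefully that the projection definition forces $(\tilde u_h,\tilde\sig_h)$ to satisfy \eqref{sec3.2:lem3}, so that the difference with $(u_h,\sig_h)$ does too, ensuring Theorem \ref{ldg2_sta} is applicable to $(q_h,\ph_h)$. Once that is in hand, the rest is bookkeeping: tracking the powers of $(1+kh)$ coming out of Proposition \ref{prop4.3} and the $\gamma_2$ factor from stability, then arranging the two contributions to match the stated $\bigl(1+\gamma_2(1+kh)kh\bigr)kh$ and $\bigl(1+\gamma_2(1+kh)\bigr)k^2h^2$ forms. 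A standard Aubin--Nitsche duality argument against the adjoint Helmholtz problem can be invoked to produce the extra $h$ in the $L^2$ bound if needed, exactly as in the analysis of LDG \#1.
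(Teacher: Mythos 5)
Your proposal follows the paper's own proof essentially step for step: the same splitting through the $b_h$-elliptic projection, Proposition \ref{prop4.3} for $(e_h,\ps_h)$, the error equation \eqref{e4.14} identifying $(q_h,\ph_h)$ as the LDG \#2 solution with data $(k^2e_h,0)$, the stability Theorem \ref{ldg2_sta} combined with \eqref{sec3.2:lem1} to obtain \eqref{e4.15}, and the triangle inequality to conclude. The membership $(q_h,\ph_h)\in\mathcal{S}_h$, which you single out as the main obstacle, is correct to check but is immediate rather than delicate: taking $v_h=0$ in \eqref{e4.14} yields exactly condition \eqref{sec3.2:lem3}.

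One step is misstated, however. You claim the jump terms needed in \eqref{sec3.2:lem1} are ``already contained in $\norme{(q_h,\ph_h)}_{DG}$''; they are not --- the norm \eqref{e3.2.4c} for method \#2 contains no jump terms at all (unlike $\|\cdot\|_{DG}$ for method \#1). To upgrade \eqref{e4.15} to a bound on $|q_h|_{1,h}$ (and on the jump part of $\|u-u_h\|_{1,h}$ in \eqref{e4.16}) you must control $\sum_{e} h_e^{-1}\|[[q_h]]\|_{L^2(e)}^2$ and $\sum_{e}\delta^2 h_e^{-1}\|[[\ph_h]]\|_{L^2(e)}^2$ separately. This is done by taking $(v_h,\ta_h)=(q_h,\ph_h)$ in \eqref{e4.14} and invoking the imaginary-part identity \eqref{e3.2.5b} (valid since $(q_h,\ph_h)\in\mathcal{S}_h$), which gives $\sum_{e}\bigl(\delta\|[[\ph_h]]\|_{L^2(e)}^2+\beta\|[[q_h]]\|_{L^2(e)}^2\bigr)\leq k^2\,\im(e_h,q_h)_\Ome\lesssim k\|e_h\|_{L^2(\Ome)}\norme{(q_h,\ph_h)}_{DG}$; with $\beta=\beta_0h^{-1}$ and $\delta=\delta_0h$ these are precisely the quantities in \eqref{sec3.2:lem1}, and the resulting bound has the same size as \eqref{e4.15}, so your final bookkeeping for \eqref{e4.16}--\eqref{e4.17} goes through (and you are right that the extra $k^{-1}$ for $\|q_h\|_{L^2(\Ome)}$ comes for free from the factor $k$ inside the $DG$-norm, with duality needed only inside Proposition \ref{prop4.3}). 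One further bookkeeping caveat, inherited from the paper's own statement of \eqref{e4.15}: your chain $\gamma_2 k\|e_h\|_{L^2(\Ome)}\lesssim\gamma_2(1+kh)k^2h^2$ does not follow from \eqref{e4.13f} alone, since $\gamma_2 k\cdot k^2h^2=\gamma_2k^3h^2$; it requires the sharper projection bound $\|e_h\|_{L^2(\Ome)}\lesssim(1+kh)kh^2$ of the type \eqref{e4.5b}, a point the paper glosses over in exactly the same way.
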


\begin{remark}
\eqref{e4.15} shows that $\ph_h:=\tilde{\sig}_h-\sig_h$ has an optimal order (in $h$)
error bound for the LDG method \#2, while \eqref{e4.10} shows that $\ph_h$ only
has a sub-optimal order error bound for the LDG method \#1. We believe that this
is the main reason why in practice the LDG method \#2 gives a better approximation to
the flux variable $\sig$ than the LDG method \#1 does although both methods have
the same asymptotic rate of convergence in $h$.
\end{remark}

\section{Numerical experiments} \label{sec-5}

In this section we shall provide some numerical results of the two proposed
LDG methods. Our tests are done for the following $2$-d Helmholtz problem:
\begin{alignat}{2}
-\Del u - k^2 u &=f:=\frac{\sin(kr)}{r}  &&\qquad\mbox{in  } \Omega,\label{e5.1}\\
\frac{\pa u}{\pa n_\Omega} +\i k u &=g &&\qquad\mbox{on } \Ga_R:=\pa\Omega.\label{e5.2}
\end{alignat}
Here $\Omega$ is the unit square $[-0.5,\,0.5]\times [-0.5,\,0.5]$,
and $g$ is chosen so that the exact solution is given by
\begin{equation}\label{e5.3}
u=\frac{\cos(kr)}{k}-\frac{\cos k+\i\sin k}{k\big(J_0(k)+\i J_1(k)\big)}J_0(kr)
\end{equation}
in polar coordinates, where $J_\nu(z)$ are Bessel functions of the first kind.

Assume $\cT_{1/m}$ be the regular triangulation that consists of
$2m^2$ right-angled equicrural triangles of size $h=1/m$, for any positive integer
$m$. See Figure \ref{fig1} for the sample triangulation $\cT_{1/4}$ and $\cT_{1/10}$.

\begin{figure}[ht]
\centerline{
\includegraphics[width=2.2in]{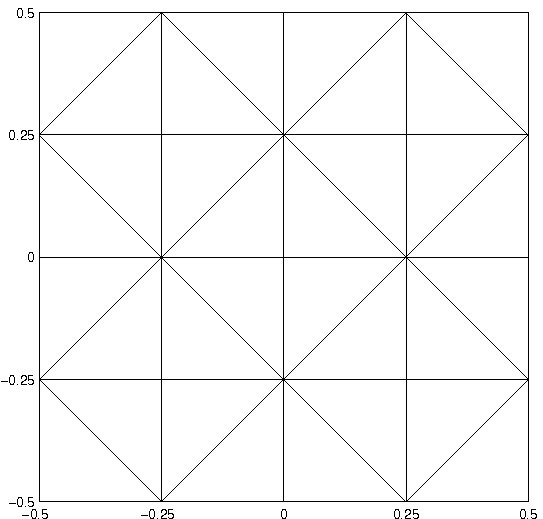}
\includegraphics[width=2.2in]{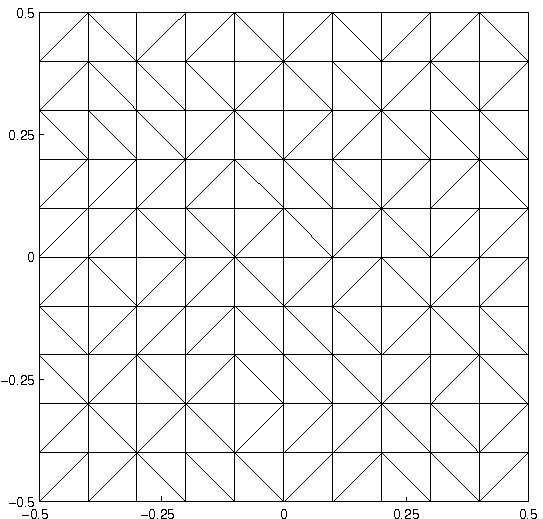}}
\caption{The computational domain and sample meshes.
Left: $\cT_{1/4}$ that consists of right-angled equicrural triangles
of size $h=\frac14$; Right: $\cT_{1/10}$ with $h=\frac{1}{10}$.}\label{fig1}
\end{figure}

\subsection{Sensitivity with respect to the parameters $\delta$
and $\beta$}\label{sec5.1}

In this subsection, we examine the sensitivity of the error of the
LDG solutions in $H^1$-seminorm with respect to the parameters
$\delta$ and $\beta$.

The LDG method \#1 is considered first.
We start by fixing $\delta=0.1 h_e$ and testing the sensitivity in the
parameter $\beta$. With two wave numbers $k=5$ and $50$,
we compute the solutions of the LDG method \#1
with different values of $\beta$: $0.001 h_e^{-1}$,
$0.01 h_e^{-1}$, $h_e^{-1}$ and $1$. The relative errors,
defined by the errors in the $H^1$-seminorm divided by the
exact solution in the $H^1$-seminorm, are shown in the left graph of
Figure \ref{fig5.1.1}. We observe that the relative errors
have similar behaviors and decay as mesh size
$h$ becomes smaller. This shows that the errors are not sensitive to
the parameter $\beta$. Next, we fix $\beta=0.001 h_e^{-1}$, and repeat
the test with different $\delta$. The right graph of Figure \ref{fig5.1.1}
shows the relative errors with parameters $\delta = 0.001 h_e$, $0.1 h_e$, $10 h_e$
and $0.1$, and wave numbers $k=5$ and $50$. We observe that the errors have
similar behaviors for small values $\delta=0.001 h_e$ and $0.1 h_e$. Larger
$\delta$ results in larger error.

The sensitivity tests of the LDG method \#2 are shown in Figure \ref{fig5.1.2},
similar behaviors are also observed.

\begin{figure}[ht]
\centerline{
\includegraphics[width=2.5in]{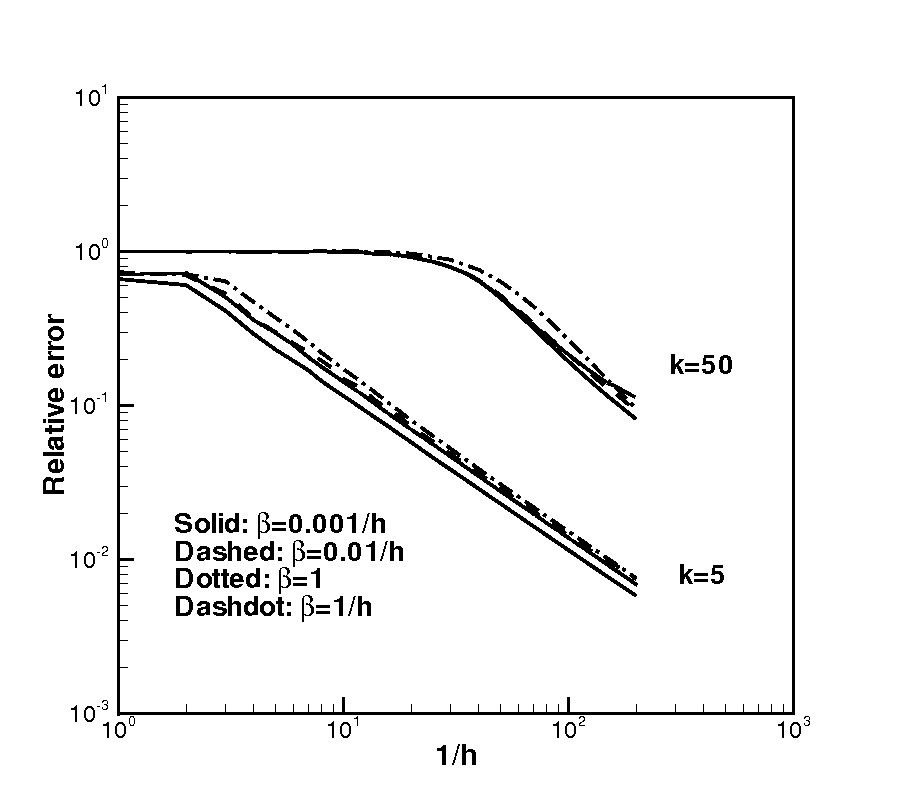}
\includegraphics[width=2.5in]{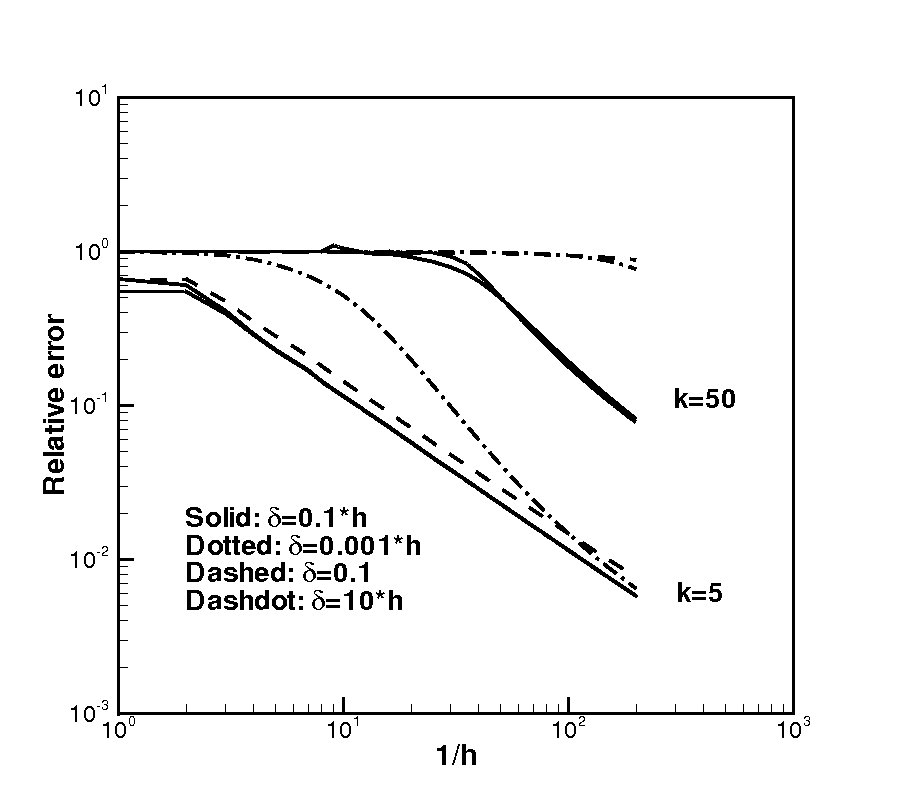}}
\caption{Relative error in the $H^1$-seminorm of the LDG method \#1 with different
parameters for two wave numbers $k=5$ and $50$.
Left: $\delta=0.1 h_e$ is fixed, $\beta = 0.001 h_e^{-1}$,
$0.01 h_e^{-1}$,  $h_e^{-1}$ and $1$;
Right: $\beta=0.001 h_e^{-1}$ is fixed, $\delta = 0.001 h_e$,
$0.1 h_e$, $10 h_e$ and $0.1$. }\label{fig5.1.1}
\end{figure}

\begin{figure}[ht]
\centerline{
\includegraphics[width=2.5in]{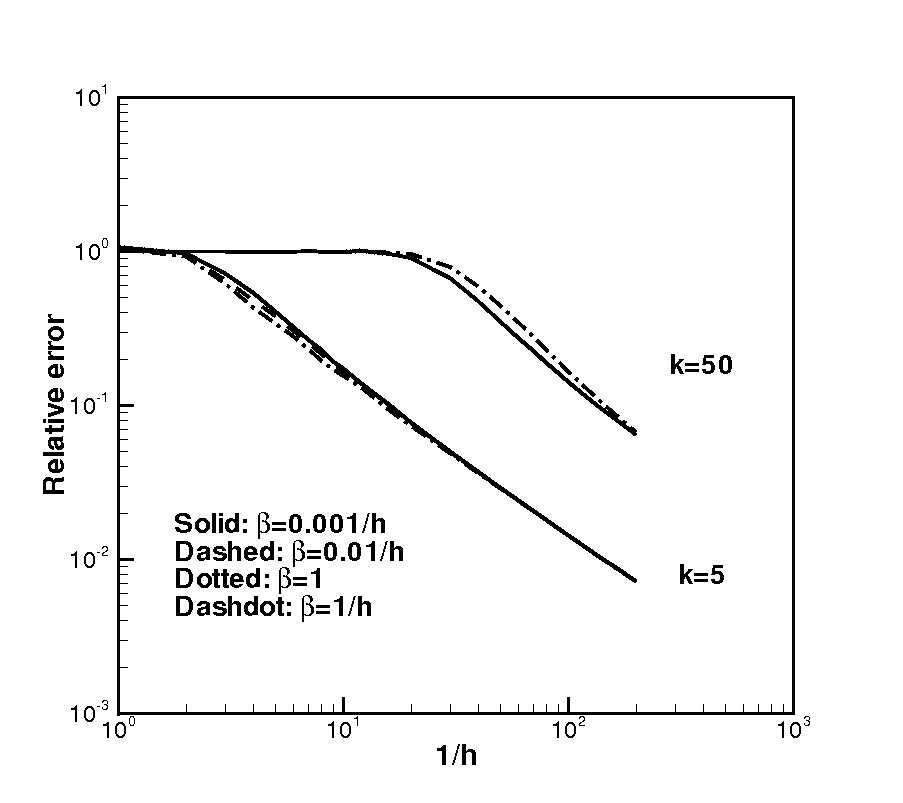}
\includegraphics[width=2.5in]{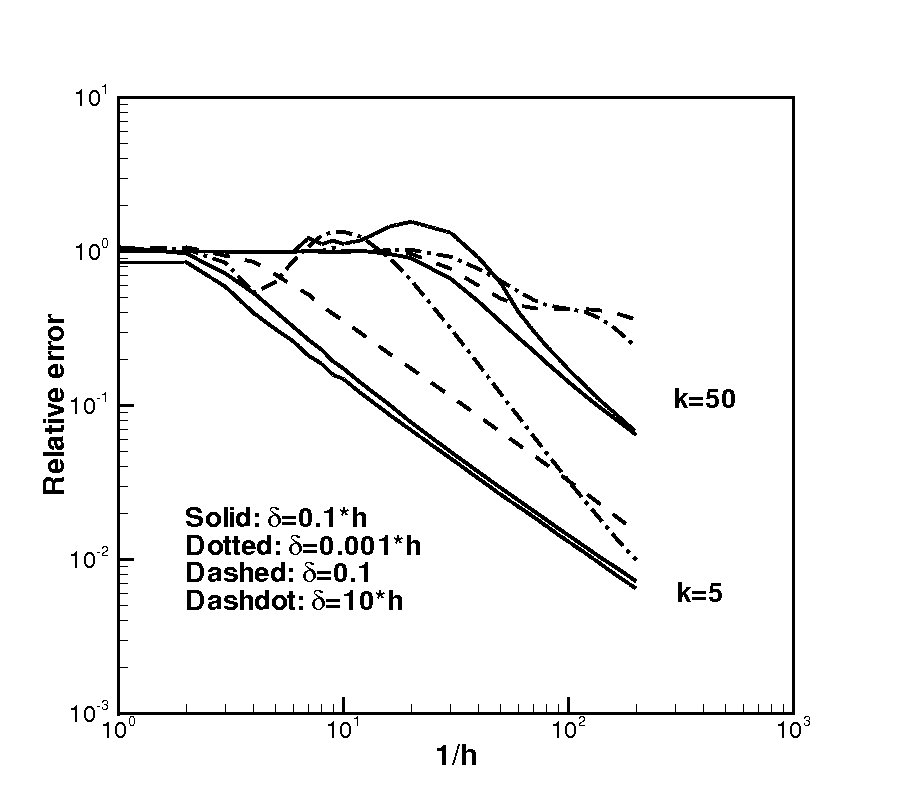}}
\caption{Relative error in the $H^1$-seminorm of the LDG method \#2 with different
parameters for two wave numbers $k=5$ and $50$.
Left: $\delta=0.1 h_e$ is fixed, $\beta = 0.001 h_e^{-1}$,
$0.01 h_e^{-1}$,  $h_e^{-1}$ and $1$;
Right: $\beta=0.001 h_e^{-1}$ is fixed, $\delta = 0.001 h_e$,
$0.1 h_e$, $10 h_e$ and $0.1$. }\label{fig5.1.2}
\end{figure}

\subsection{Errors of the LDG solutions}\label{sec5.2}

In this subsection, we fix the parameters and investigate the
changes of the numerical errors as functions of the mesh size.

We start from the LDG method \#1. As suggested by the sensitivity
tests in the previous subsection, we pick
\begin{equation}\label{ldg1:par}
\delta = 0.1 h_e, \qquad  \beta = 0.001 h_e^{-1}.
\end{equation}
The relative error of the LDG method, and the finite element interpolation
are shown in the left graph of Figure \ref{fig5.2.1}, with four different
wave numbers $k=5$, $10$, $50$ and $100$.  The relative error of the
LDG solution stays around $100\%$ before a critical mesh size is reached,
then decays at a rate greater than $-1$ in the log-log scale but converges
as fast as the finite element interpolation (with slope $-1$) for small $h$.
The critical mesh size decreases as $k$ increases.

The right graph of Figure \ref{fig5.2.1} contains
the relative error when we fix $k h=1$ and $hk=0.5$. It indicates that
unlike the error of the finite element interpolation
the error of the LDG is not controlled by the magnitude of $k h$,
which suggests that there is a pollution contribution in the total error.
The left graph of Figure \ref{fig5.2.2} contains the relative error
of the LDG method with the mesh size satisfying $k^3h^2=1$ for different
values of $h$. The error does not increase with respect to $k$.

The LDG method \#2 has also been tested using the same parameters in
\eqref{ldg1:par}. Similar behaviors have been observed as shown in Figure
\ref{fig5.2.2} and \ref{fig5.2.3}.

\begin{figure}[ht]
\centerline{
\includegraphics[width=2.5in]{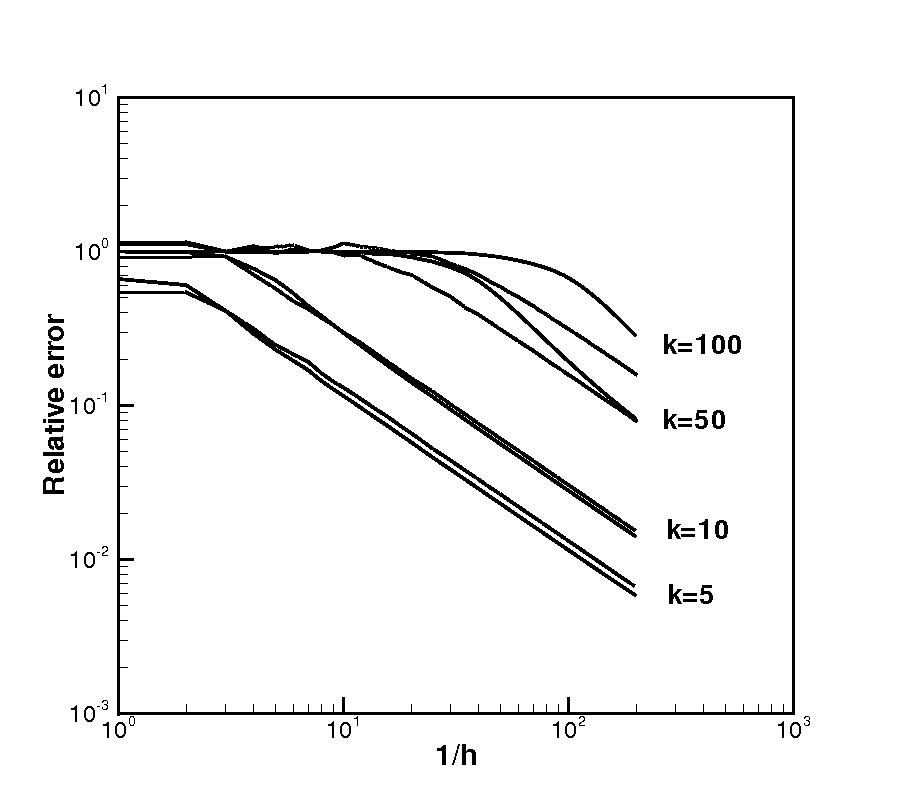}
\includegraphics[width=2.5in]{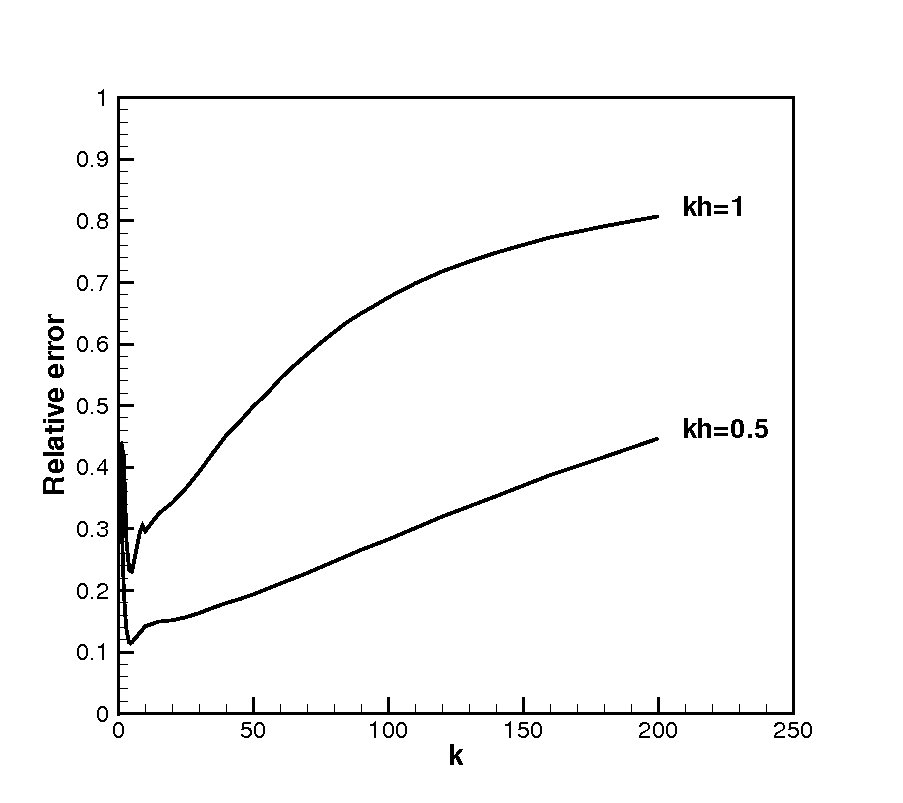}}
\caption{Left: relative error of the LDG method \#1 (solid line)
and the finite element interpolation (dotted line) in the $H^1$-seminorm for
$k=5$, $10$, $50$ and $100$; Right: relative error of the  LDG method \#1
in the $H^1$-seminorm for $k=1,\cdots,200$, $kh=1$ and $kh=0.5$.}\label{fig5.2.1}
\end{figure}

\begin{figure}[ht]
\centerline{
\includegraphics[width=2.5in]{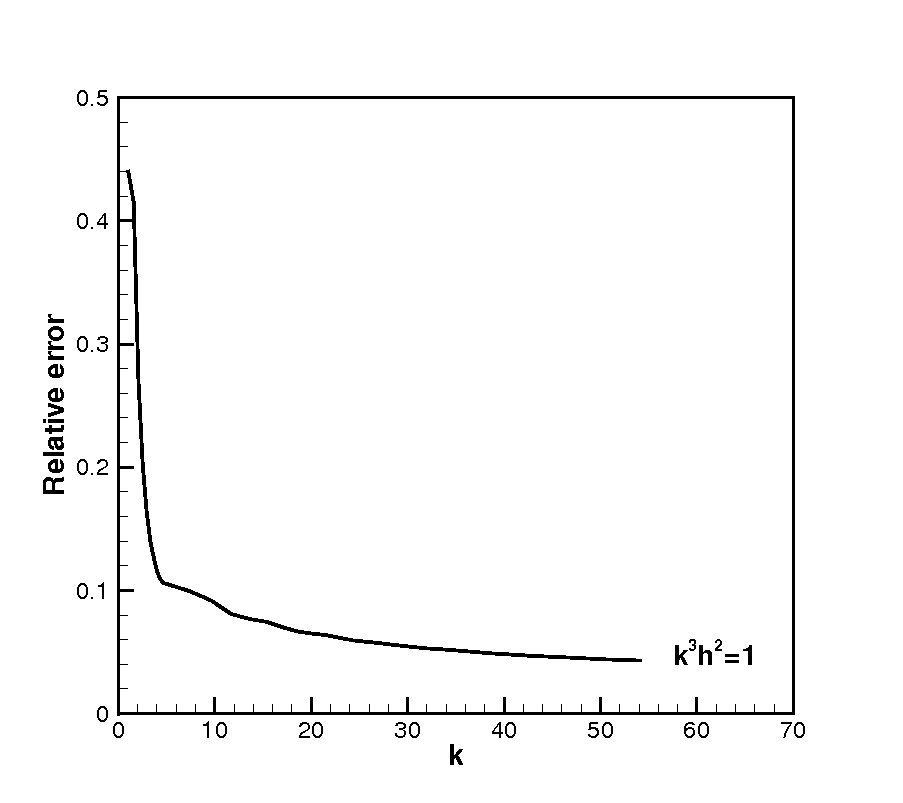}
\includegraphics[width=2.5in]{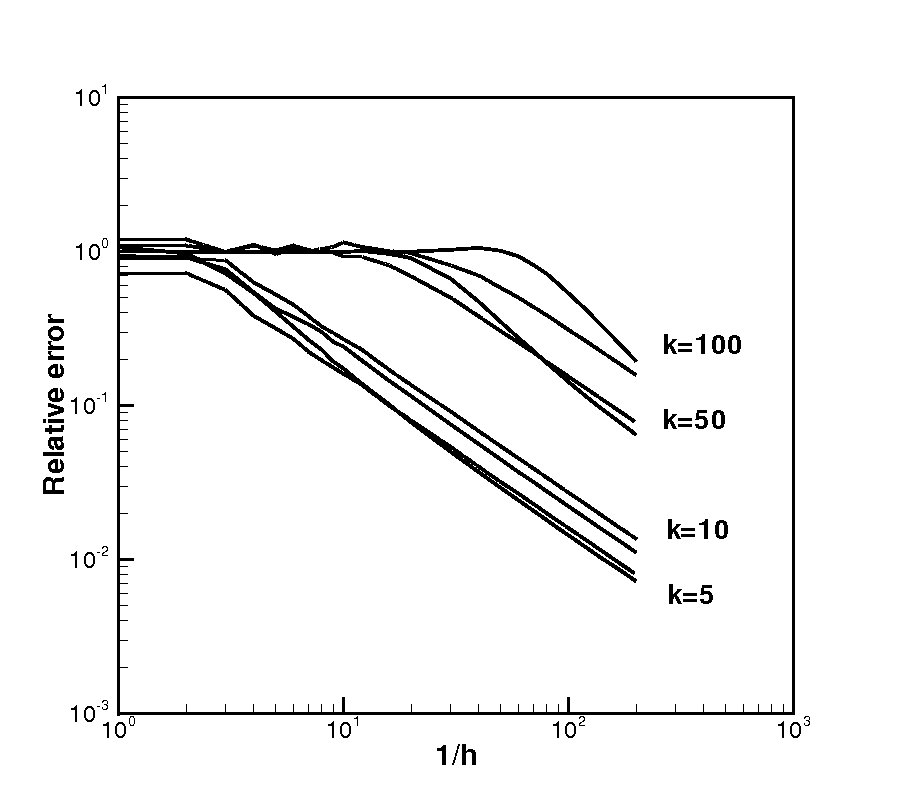}}
\caption{Left: relative error of the  LDG method \#1
in the $H^1$-seminorm with $k^3h^2=1$;
Right: relative error of the LDG method \#2 (solid line)
and the finite element interpolation (dotted line) in the $H^1$-seminorm for
$k=5$, $10$, $50$ and $100$.}\label{fig5.2.2}
\end{figure}

\begin{figure}[ht]
\centerline{
\includegraphics[width=2.5in]{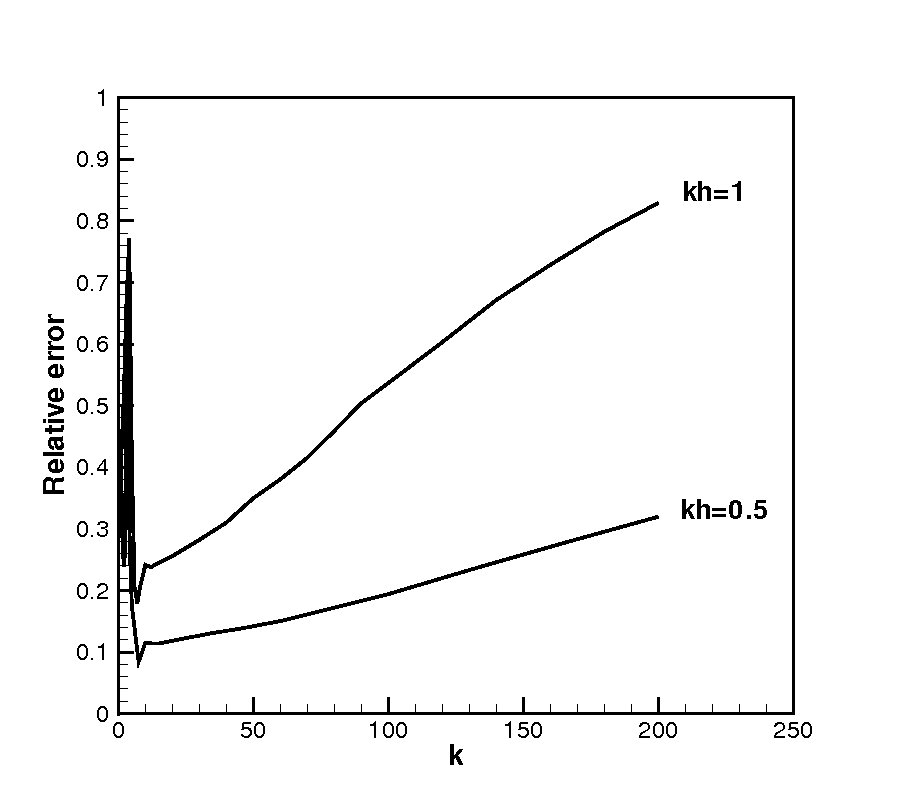}
\includegraphics[width=2.5in]{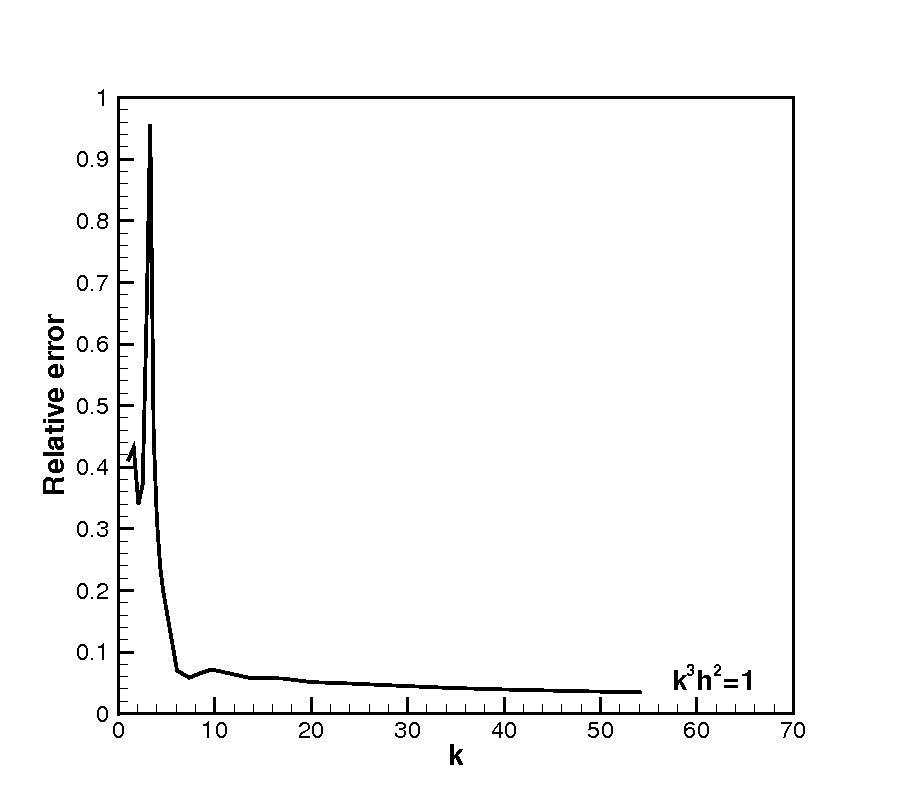}}
\caption{Relative error of the LDG method \#2 in the $H^1$-seminorm.
Left: $kh=1$ and $kh=0.5$ Right: $k^3h^2=1$.}\label{fig5.2.3}
\end{figure}

At the end, we look closely at the situation with a large relative error
when $kh>1$. The LDG method \#1 solution with parameters $\delta=0.1 h_e$,
$\beta=0.001 h_e^{-1}$, $k=100$ and $h=1/45$ has a large relative error
of size $0.9392$. The surface plots of the finite element interpolation and the LDG
solution are given in Figure \ref{fig5.2.4}. It shows that the LDG solution
has the correct shape/phase although its amplitude is smaller.

\begin{figure}[ht]
\centerline{
\includegraphics[width=2.5in]{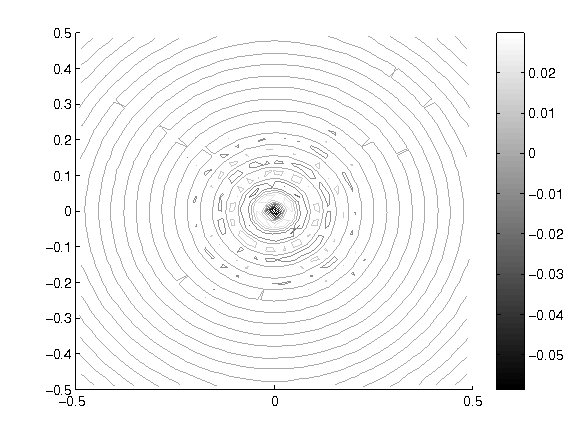}
\includegraphics[width=2.5in]{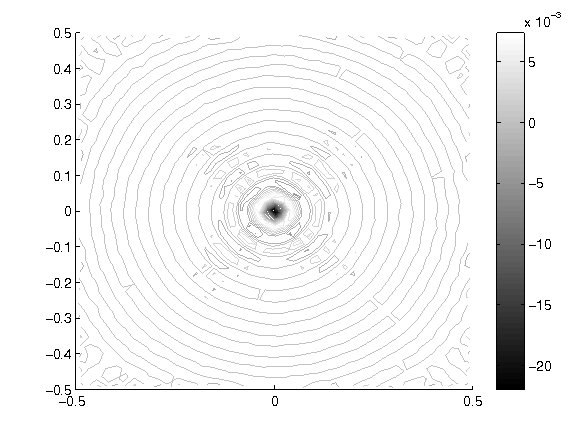}}
\caption{Left: surface plots of the finite element interpolation (left)
and the LDG method \#1 solution (right) with parameters
$\delta=0.1 h_e$, $\beta=0.001 h_e^{-1}$, $k=100$ and $h=1/45$.}\label{fig5.2.4}
\end{figure}

\subsection{Comparison between the two LDG methods}\label{sec5.3}

Two different LDG methods are proposed in this paper. The first
one is derived following the IPDG method proposed in \cite{fw08a},
and the second one has a more standard numerical flux formulation
and is supposed to have a better approximation for the vector/flux variable.
In this subsection, we provide a comparison between these two methods,
in terms of the error and computational cost.

We start by revisiting the test examples of Subsection \ref{sec5.1}.
Instead of computing the relative error of $u_h$ in the $H^1$-seminorm,
we compute the relative error of $\sig_h$ in the $L^2$-norm for the
LDG method \#2. The numerical results are presented in Figure \ref{fig5.3},
which show that although the solution is still not
sensitive to the parameter $\beta$, better approximation to $\sig$
is achieved for larger $\delta$. It confirms our prediction that the
LDG method \#2 gives a better approximation for the vector/flux variable.

\begin{figure}[ht]
\centerline{
\includegraphics[width=2.5in]{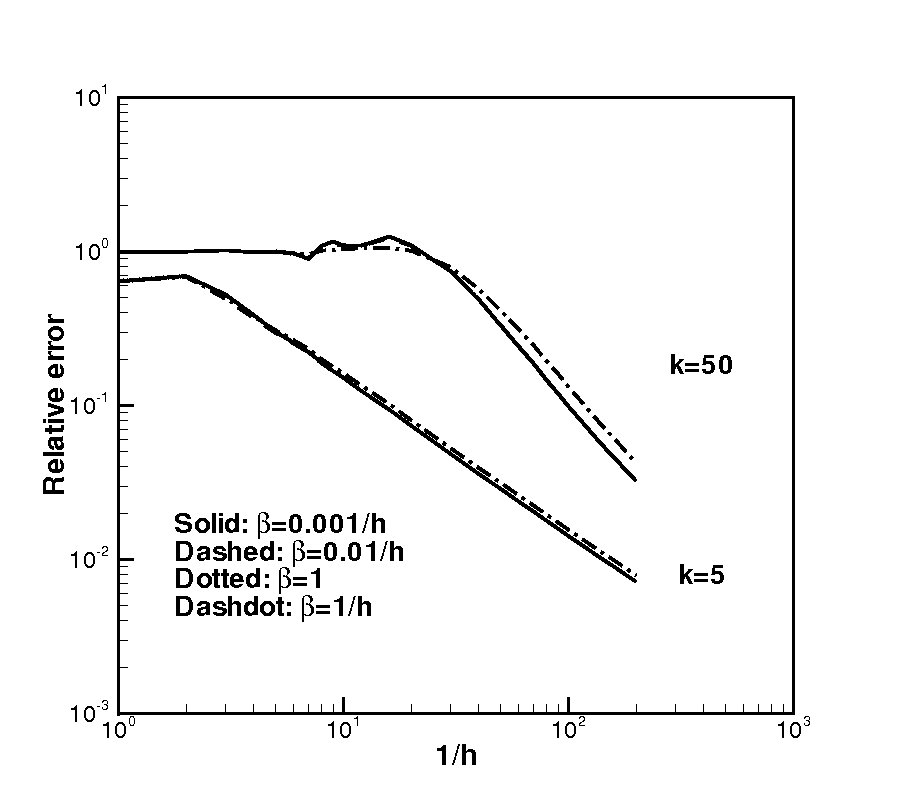}
\includegraphics[width=2.5in]{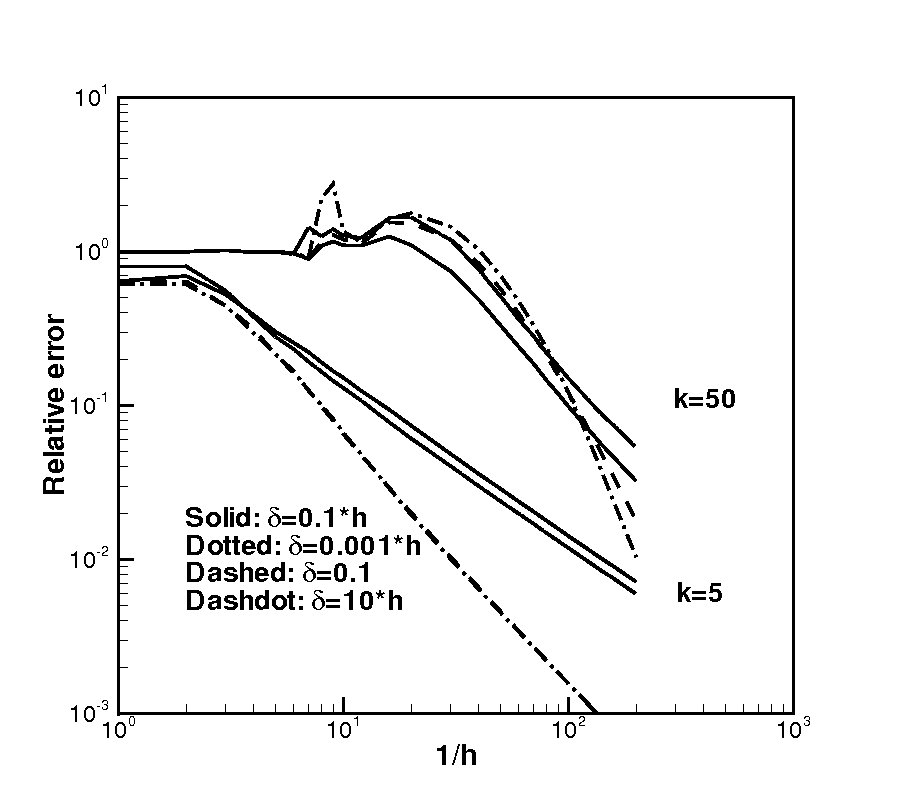}}
\caption{Relative error of $\sig_h$ in $L^2$-norm
of the LDG method \#2 with different
parameters for two wave numbers $k=5$ and $50$.
Left: $\delta=0.1 h_e$ is fixed, $\beta = 0.001 h_e^{-1}$,
$0.01 h_e^{-1}$,  $h_e^{-1}$ and $1$;
Right: $\beta=0.001 h_e^{-1}$ is fixed, $\delta = 0.001 h_e$,
$0.1 h_e$, $10 h_e$ and $0.1$. }\label{fig5.3}
\end{figure}

Table \ref{tab1} provides a
detailed comparison of these two methods for different mesh sizes
$h$, with the parameters $\delta =0.1 h_e$, $\beta=0.001 h_e^{-1}$ and $k=10$.
It shows that the computational cost of the LDG method \#2 is about twice larger than
that of the LDG method \#1. Also, as expected, the error of the vector/flux variable
of the LDG method \#2 is smaller than that of the LDG method \#1, and both methods
demonstrate a first order rate of convergence.

\begin{table}
\caption{Comparison of the two LDG methods with parameters
$\delta =0.1 h_e$ and $\beta=0.001 h_e^{-1}$.} \label{tab1}
\begin{tabular}{|c|c|c|c|c|c|c|}
\hline
& $1/h$ & $|u-u_h|_{H^1}$ & order & $\|\sig -\sig_h\|_{L^2}$
& order & CPU time (s) \\ \hline
& 5 & 4.1059E-01 &  & 5.4715E-01 &  & 0.0641 \\ \cline{2-7}
& 10 & 1.6915E-01 & 1.2794 & 2.4712E-01 & 1.1467 & 0.2381 \\ \cline{2-7}
LDG \#1 & 20 & 7.6089E-02 & 1.1525 & 1.1804E-01 & 1.0659 & 0.9671 \\
\cline{2-7}
& 40 & 3.6648E-02 & 1.0539 & 5.7114E-02 & 1.0474 & 3.9380 \\ \cline{2-7}
& 80 & 1.8151E-02 & 1.0137 & 2.8319E-02 & 1.0121 & 15.8194 \\ \cline{2-7}
& 160 & 9.0379E-03 & 1.0060 & 1.4004E-02 & 1.0159 & 69.1861 \\ \hline\hline
& 5 & 2.4711E-01 &  & 2.2184E-01 &  & 0.1057 \\ \cline{2-7}
& 10 & 1.4040E-01 & 0.8156 & 7.6775E-02 & 1.5308 & 0.4368 \\ \cline{2-7}
LDG \#2 & 20 & 6.6992E-02 & 1.0675 & 3.3630E-02 & 1.1909 & 1.8356 \\
\cline{2-7}
& 40 & 3.2693E-02 & 1.0350 & 1.5710E-02 & 1.0981 & 7.9357 \\ \cline{2-7}
& 80 & 1.6165E-02 & 1.0161 & 7.7418E-03 & 1.0209 & 34.5688 \\ \cline{2-7}
& 160 & 8.0949E-03 & 0.9978 & 3.9127E-03 & 0.9845 & 157.6018 \\ \hline
\end{tabular}
\end{table}

\begin{figure}[hbt]
\centerline{
\includegraphics[width=2.5in,height=1.2in]{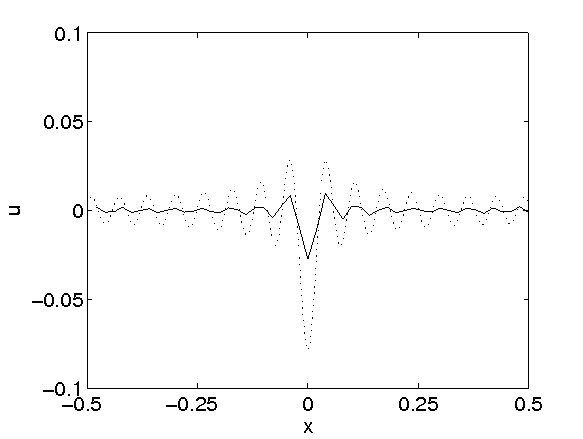}
\hspace{0.05in}
\includegraphics[width=2.5in,height=1.2in]{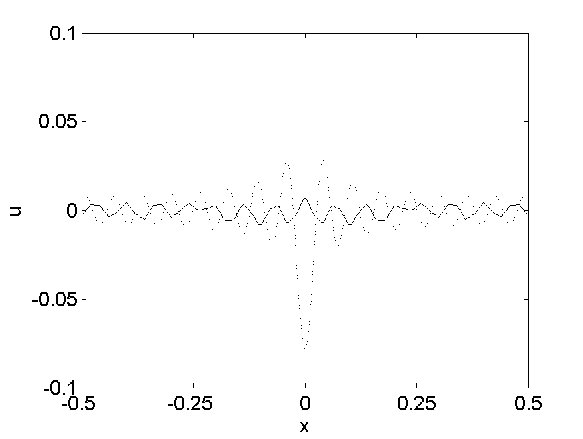}
} \centerline{
\includegraphics[width=2.5in,height=1.2in]{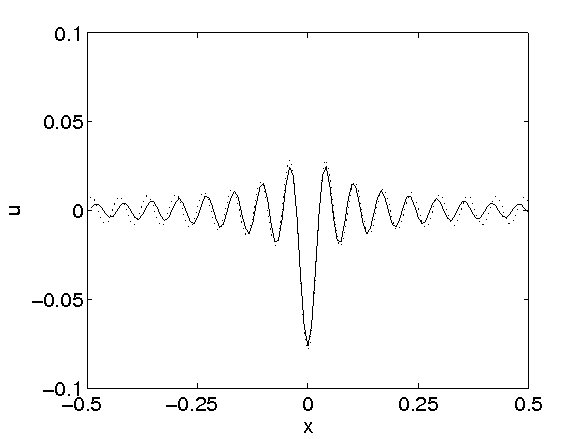}
\hspace{0.05in}
\includegraphics[width=2.5in,height=1.2in]{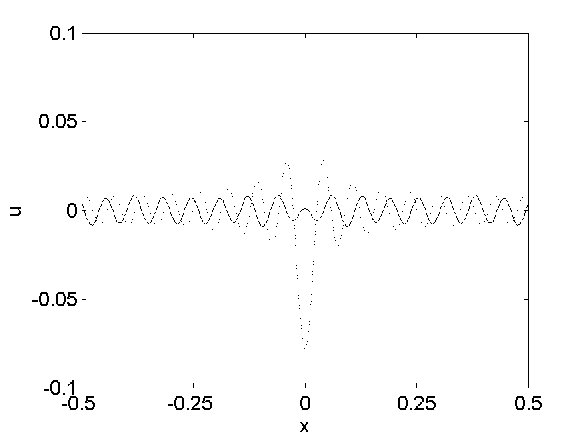}
} \centerline{
\includegraphics[width=2.5in,height=1.2in]{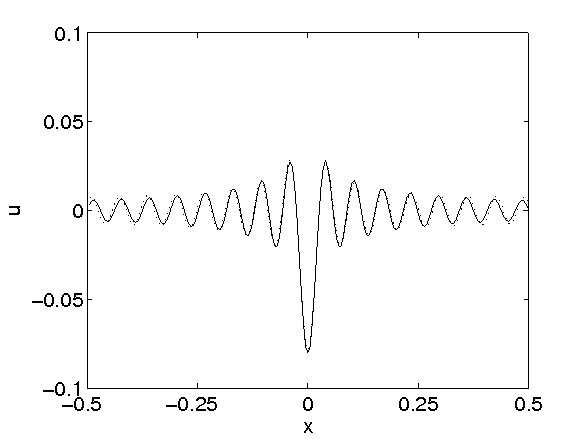}
\hspace{0.05in}
\includegraphics[width=2.5in,height=1.2in]{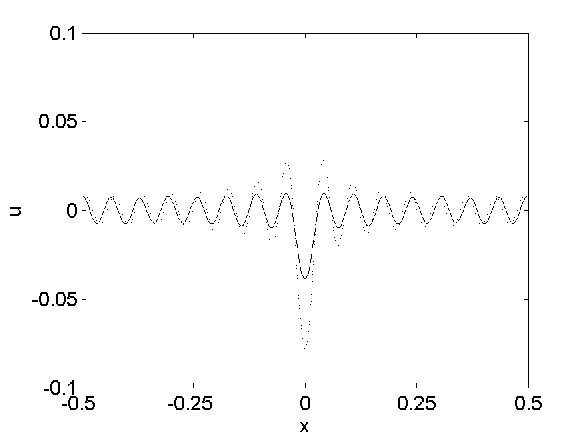}
} \caption{The traces of the LDG \#1 solution (left) and the finite element
solution (right) in the $xz$-plane, for $k=100$ and $h=1/50$ (top), $1/120$ (middle)
and $1/200$ (bottom), respectively. The dotted lines are the traces of the exact
solution.} \label{fig5.4}
\end{figure}

\subsection{Comparison between LDG and finite element solutions}

We have shown the performance and comparison of the two LDG methods
in previous subsections. In this subsection, we provide a brief
comparison between the LDG solution and the $P_1$ conforming finite element solution.

We consider the Helmholtz problem \eqref{e5.1}-\eqref{e5.2} with wave number
$k=100$. With mesh size $h=1/50$, $1/120$ and $1/200$, we plot the traces of the
LDG method \#1 solution with parameters \eqref{ldg1:par} in $xz$-plane in
the left column of Figure \ref{fig5.4}. The exact solution is also
provided as a reference. The traces of the finite element
solution are shown in the right column of Figure \ref{fig5.4}.
It is clear that the LDG method \#1 has a better approximation
to the exact solution. Larger phase error in
the finite element solution is observed in all three cases.
Also, the LDG solution has a better approximation for the amplitude
of the exact solution.


\end{document}